\newtheorem{theorem}{Theorem}[section]
\newtheorem{lemma}[theorem]{Lemma}
\newtheorem{example}[theorem]{Example}
\newtheorem{corollary}[theorem]{Corollary}
\newtheorem{definition}[theorem]{Definition}
\newtheorem{convention}[theorem]{Convention}
\newtheorem{proposition}[theorem]{Proposition}
\newtheorem{maintheorem}{Theorem}
\newtheorem{remark}[theorem]{Remark}
\newtheorem*{question}{Question}
\title[Severi-Brauer surfaces]{A determinant on birational maps of Severi-Brauer surfaces}
\author{Elias Kurz}
\date{\today}
\address{Elias Kurz, Universit\'e de Neuch\^atel,
Institut de Math\'ematiques,
Rue Emile-Argand 11,
Switzerland}
\subjclass[2020]{14E07, 14E05, 14E30, 14J45, 20F05, 20L05}
\begin{document}

\maketitle

 \begin{abstract}
     We define a determinant on the group of automorphisms of non-trivial Severi-Brauer surfaces over a perfect field. Using the generators and relations, we extend this determinant to birational maps between Severi-Brauer surfaces. Using this determinant and a group homomorphism found in ~\cite{BSY} we can determine the abelianization of the group of birational transformations of a non-trivial Severi-Brauer surface. This is the first example of an abelianization of the group of birational transformations of a geometrically rational surface where the automorphisms are non trivial. Using the abelianization we find maximal subgroups of the group of birational transformations of a non-trivial Severi-Brauer surface over a perfect field.
 \end{abstract}

\tableofcontents

\newpage

\section{Introduction}
\noindent
Let $S$ be a geometrically rational surface over some field $\textbf{k}$. The group of birational transformations $Bir_{\textbf{k}}(S)$ has been studied a lot. One of the problems related to the group of birational transformations is to find the abelianization, which is the group modulo its commutator subgroup and thus the largest abelian quotient. For an algebraically closed field the Cremona group is generated by $\textrm{PGL}_3(\textbf{k})$ and the standard quadratic involution. Over an algebraically closed field $\textrm{PGL}_3(\textbf{k}) = \textrm{PSL}_3(\textbf{k})$, which is simple, thus this is trivial in the abelianization. Since the standard quadratic involution is conjugated to an automorphism the abelianization is the trivial group. In \cite{Zimmermann_2018} it was shown, that over the real numbers the abelianization of the reel Cremona group $Bir_{\mathbb{R}}(\mathbb{P}^2)$ is given by $\bigoplus_{(0,1]} \mathbb{Z}/2\mathbb{Z}$. In this case, the automorphisms are sent to the trivial element. We also have, that the abelianization is a $2$-torsion group. Over a perfect field $\textbf{k}$ such that $[\bar{\textbf{k}}:\textbf{k}] > 3$ it was shown in \cite{LamySchneider_2024} that the plane Cremona group is generated by involutions and admits a lot of group homomorphisms to $\mathbb{Z} / 2 \mathbb{Z}$, so the abelianization is a highly non-trivial product of copies of $\mathbb{Z}/2 \mathbb{Z}$ (\cite{Lamy_2020}, \cite{LamySchneider_2024}). Again one can show that the automorphisms are trivial in the abelianization.
\newline Let $\textbf{k}$ be a perfect field. A Severi-Brauer surface is a projective smooth surface $S$ over $\textbf{k}$, such that $S_{\bar{\textbf{k}}} \simeq \mathbb{P}^2_{\bar{\textbf{k}}}$. By a theorem from Châtelet we have that $S$ is isomorphic to $\mathbb{P}^2$ already over $\textbf{k}$ if and only if it contains a \textbf{k}-point. In that case we call the Severi-Brauer surface trivial. For a Galois extension $\textbf{k} \subset \textbf{L}$ there is a one-to-one correspondence between isomorphy classes of Severi-Brauer surfaces that split over $\textbf{L}$ and $H^1(Gal(\textbf{L}/ \textbf{k}), \textrm{PGL}_3(\textbf{L}))$ (\cite{serre2004corps}). To find this correspondence one takes an $\textbf{L}$-isomorphism $\varphi$ from $S$ to $\mathbb{P}^2$. After this you can see, that $\varphi g \varphi^{-1} = \alpha_g g, g \in \textrm{Gal}(\textbf{L},\textbf{k})$ for an automorphism $\alpha_g$ of $\mathbb{P}^2$. The map $g \mapsto \alpha_g$ is then a cocycle in $H^1(Gal(\textbf{L}/ \textbf{k}), \textrm{PGL}_3(\textbf{L}))$ using Galois descent one can prove surjectivity. There is also a one-to-one correspondence to central simple algebras over $\textbf{k}$ of dimension $9$ that split over $\textbf{L}$ up to Brauer equivalence, using a similar technique. Using the same central simple algebra but with opposite multiplication we get a Severi-Brauer surface $S^{op}$, which is non-isomorphic to $S$ if $S$ is not trivial. From \cite{kollár2016} we get, that $S$ only contains $d$-points, where $3$ divides $d$ and there exists at least one $3$-point in $S$. The reason why Severi Brauer surfaces are of particular interest are the results from \cite{BSY}, which studies the birational maps of Severi-Brauer surfaces and gives a particularly interesting group homomorphism:
\[Bir_{\textbf{k}}(S) \rightarrow \bigoplus_{p \in \mathcal{E}_3 \setminus \{ q \}} \mathbb{Z}/3\mathbb{Z} \oplus \big (\bigoplus_{p \in \mathcal{E}_6} \mathbb{Z} \big )\]
The $\mathbb{Z}$ part of this group depends on the $6$-points, which do not necessarily always exist. In the case where they exist, we can already see that the abelianization of the group of birational transformations is not a torsion group in contrary to the prior examples. This group homomorphism does not yet give the abelianization. It has in its kernel all the automorphisms. While in the prior examples the automorphisms have always been trivial in the abelianization, we show in this paper, that for non-trivial Severi-Brauer surfaces this is not the case.
\newline 
To find the abelianization of the group of birational transformations of a non-trivial Severi-Brauer surface, we look at a group homomorphism $det: Aut_{\textbf{k}}(S) \rightarrow \textbf{k}^* / (\textbf{k}^*)^3$ and extend this to the birational transformations. To construct this homomorphism, we look at a splitting field $\textbf{L}$ of $S$ and use an isomorphism $\varphi$ from $S$ to the projective plane over $\textbf{L}$. Conjugating the automorphisms of $S$ with this isomorphism, we arrive at an element in $\textrm{PGL}_3(\textbf{L})$. Up to $\textbf{k}^*$ we can find a representant $A \in \textrm{GL}_3(\textbf{L})$ of this automorphism such that $A^g = A_g^{-1}AA_g, \,g \in \textrm{Gal}(\textbf{L},\textbf{k})$ (Lemma \ref{AutRep}) for a $A_g \in \textrm{GL}_3(\textbf{L})$ representing $\alpha_g$. The determinant of this representant is in $\textbf{k}^*$ and its class in $\textbf{k}^*/(\textbf{k}^*)^3$ only depends on $\alpha$ and not the representants $A_g$ of the $\alpha_g$, the isomorphism $\varphi$ or the splitting field (Lemma \ref{Determinant}). Even more, this map gives us a group homomorphism \[det: Aut_{\textbf{k}}(S) \rightarrow \textbf{k}^*/(\textbf{k}^*)^3\] which is the abelianization of the automorphism group of $S$ (Lemma \ref{AutAbel}). Now to use this with birational transformations we need to look at Mori fibre spaces. A rank $r$ fibration is a surjective morphism $X \rightarrow B$ where $X$ is a geometrically rational surface with relatively ample anticanonical divisor, $r \geq 1$, the morphism has connected fibers, the relative Picard rank equals $r$ and $B$ is a point or a smooth curve (Definition \ref{RankRFibration}). A Mori fibre space is a rank $1$ fibration. We look at the groupoid $BirMori(S)$ consisting of all birational maps between two Mori fibre spaces birational to $S$. Let now $X_i \rightarrow B_i, i=1,2$ be rank $i$ fibrations such that there is a birational morphism $X_2 \rightarrow X_1$ inducing a morphism $B_1 \rightarrow B_2$. We now say that $X_2 \rightarrow B_2$ dominates $X_1 \rightarrow B_1$. Now for the rank $2$ fibration fixed we get exactly two dominated Mori fibre spaces up to isomorphism. The induced map between them is called Sarkisov link (Definition \ref{SarkisovLink}). If we do the same thing with rank $2,3$ fibrations we get relations on those Sarkisov links called elementary relations. In \cite{Iskovskikh} and \cite{Lamy_2020} it is shown that $BirMori(S)$ is generated by Sarkisov links and isomorphisms where all relations are generated by the trivial and elementary relations. For Severi-Brauer surfaces, one can prove that the Sarkisov links between Mori fibre spaces birational to $S$ are links based at $3$ or $6$-points and are between $S$ and $S^{op}$. The Mori-fibre spaces birational to $S$ are also isomorphic to $S$ or $S^{op}$ (\cite{BSY}). In this paper we prove the following theorem, which will be the main part to find the abelianization of the group of birational transformations of a non-trivial Severi-Brauer surface.
\begin{maintheorem} \label{MainA}
    Let $S$ be a non-trivial Severi Brauer surface over $\textbf{k}$ and $S^{op}$ its opposite Severi-Brauer surface. Let $\chi, \chi_i: S \dashrightarrow S^{op}, \, \tau, \tau_i:S^{op} \dashrightarrow S$ be $\textbf{k}$-Sarkisov links based on $3$-points such that there are $\alpha_i, \delta_i \in Aut_{\textbf{k}}(S), \, \gamma_i, \beta_i \in Aut_{\textbf{k}}(S^{op})$ such that $\tau_i = \alpha_i \tau \beta_i, \, \chi_i = \gamma_i \chi \delta_i$ and the base points of $\chi, \tau$ are in general positions. Now if $\tau_3 \chi_3 \tau_2 \chi_2 \tau_1 \chi_1 = 1$ we have, that: \[\prod_{i=1}^3 det(\alpha_i\delta_i)det(\beta_i\gamma_i)^{-1}\]
\end{maintheorem}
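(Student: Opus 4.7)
The idea is to transport the relation to a matrix identity in $\mathrm{GL}_3(\textbf{L})$ over a splitting field $\textbf{L}$ of $S$, from which the product of determinants can be read off directly. Fix $\textbf{L}$-isomorphisms $\varphi\colon S_{\textbf{L}}\xrightarrow{\sim}\mathbb{P}^{2}_{\textbf{L}}$ and $\varphi'\colon S^{op}_{\textbf{L}}\xrightarrow{\sim}\mathbb{P}^{2}_{\textbf{L}}$. By the general position hypothesis on the base points of $\tau$ and $\chi$, after further conjugating $\varphi,\varphi'$ I may identify both $\varphi\tau(\varphi')^{-1}$ and $\varphi'\chi\varphi^{-1}$ with the standard quadratic involution $\sigma_{0}\colon [x{:}y{:}z]\mapsto [yz{:}xz{:}xy]$, based at the coordinate points. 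By Lemma~\ref{AutRep}, each automorphism admits a lift to $\mathrm{GL}_3(\textbf{L})$ whose determinant represents its $\det$-class in $\textbf{k}^{\ast}/(\textbf{k}^{\ast})^{3}$; denote the lifts of $\alpha_i,\delta_i,\beta_i,\gamma_i$ by $A_i,D_i,B_i,C_i$ respectively.

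Substituting $\tau_i = \alpha_i\tau\beta_i$ and $\chi_i = \gamma_i\chi\delta_i$ into $\tau_3\chi_3\tau_2\chi_2\tau_1\chi_1 = \mathrm{id}$, the relation becomes, after passing to $\mathrm{Bir}(\mathbb{P}^2_{\textbf{L}})$ and allowing a scalar ambiguity $\lambda\in\textbf{L}^{\ast}$,
\[
A_{3}\,\sigma_{0}\,(B_{3}C_{3})\,\sigma_{0}\,(D_{3}A_{2})\,\sigma_{0}\,(B_{2}C_{2})\,\sigma_{0}\,(D_{2}A_{1})\,\sigma_{0}\,(B_{1}C_{1})\,\sigma_{0}\,D_{1} \;=\; \lambda\cdot I_{3}.
\]
The key computational ingredient is the \emph{sandwich identity} $\det(\sigma_{0}M\sigma_{0}) \equiv \det(M)^{-1} \pmod{(\textbf{L}^{\ast})^{3}}$. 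I would verify it first on the diagonal torus, where the direct computation $\sigma_{0}(a,b,c)=(bc,ac,ab)\sim (a,b,c)^{-1}$ gives $\sigma_0 T\sigma_0 = T^{-1}$, then on the monomial subgroup, where $\sigma_0$ commutes with coordinate permutations. Applying this identity to each of the three occurrences of $\sigma_{0}(B_iC_i)\sigma_{0}$, taking determinants of both sides, and using $\det(\lambda I_{3})=\lambda^{3}\equiv 1\pmod{(\textbf{L}^{\ast})^{3}}$, gives
\[
\prod_{i=1}^{3}\det(\alpha_i\delta_i)\,\det(\beta_i\gamma_i)^{-1} \;\equiv\; 1 \pmod{(\textbf{L}^{\ast})^{3}}.
\]
Because each factor on the left-hand side is already known by Lemma~\ref{Determinant} to lie in $\textbf{k}^{\ast}/(\textbf{k}^{\ast})^{3}$, the congruence descends to the base field.

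The main obstacle is justifying the sandwich identity beyond the monomial case: in general $\sigma_{0}M\sigma_{0}$ is not an element of $\mathrm{PGL}_{3}(\textbf{L})$, so ``$\det$'' of it has no \emph{a priori} meaning. My plan is to bypass this by never evaluating $\det$ on a single sandwich. The full product $A_{3}\sigma_{0}(B_{3}C_{3})\sigma_{0}\cdots\sigma_{0}D_{1}=\lambda I_{3}$ is a genuine element of $\mathrm{GL}_3(\textbf{L})$, and I would use the Galois cocycle from Lemma~\ref{AutRep} together with the $\textbf{k}$-rationality of the links $\tau_i,\chi_i$ to constrain each $B_iC_i$ (after an appropriate normalization) to lie inside the normalizer of the standard torus, that is inside the monomial subgroup $(\textbf{L}^{\ast})^{2}\rtimes\mathfrak{S}_{3}$ of $\mathrm{PGL}_3(\textbf{L})$, where the sandwich identity holds term-by-term. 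Alternatively, one can write $\sigma_{0}M\sigma_{0}$ as an explicit triple of degree-$\le 3$ homogeneous polynomials, cancel the common $xyz$ factor produced by the two $\sigma_{0}$'s, and extract the cube-class of the resulting linear factor by a direct Bruhat-type computation; this route makes the identity completely elementary but requires a case analysis on the Bruhat cell containing $M$.
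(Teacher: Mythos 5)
Your plan hinges on the ``sandwich identity'' $\det(\sigma_0 M \sigma_0)\equiv\det(M)^{-1}$, and this is where the argument breaks. The identity holds exactly when $M$ is monomial, but the matrices $B_iC_i$ arising from a genuine elementary relation are \emph{never} monomial: the hexagon of Figure \ref{fig: rel} requires the base point of $\tau_i$ to be a $3$-point in general position with respect to the base point of $\chi_i^{-1}$, i.e.\ $\beta_i\gamma_i$ must move the coordinate-point triple off itself. So your ``Plan A'' (forcing $B_iC_i$ into the normalizer of the torus) contradicts the geometry of the relation rather than following from $\textbf{k}$-rationality. For non-monomial $M$ the composition $\sigma_0 M\sigma_0$ is a degree-$4$ birational map, so there is no determinant to extract sandwich-by-sandwich; the degree cancellation down to a linear map happens globally across all six links, precisely because each link's base points are aligned with its neighbours', and ``Plan B'' would have to reproduce that global bookkeeping, which is essentially the entire content of the proof. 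A further normalization problem: when the two classes of base points are inequivalent, $\tau$ and $\chi$ cannot both be conjugated to $\sigma_0$ at the coordinate points by the same pair $\varphi,\varphi'$; one must work over the compositum of the two splitting fields and keep track of the cocycle conditions (this is what the matrices $B(b)$ of Lemma \ref{3-pointNonEq} and the scalar $\tilde\lambda$ of Lemma \ref{BB'} are for).

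There is also a genuine gap at the very last step. You conclude that the product of determinants lies in $(\textbf{L}^*)^3$ and then claim the congruence ``descends to the base field'' because each factor lies in $\textbf{k}^*$. But $\textbf{k}^*\cap(\textbf{L}^*)^3\neq(\textbf{k}^*)^3$ in general: for a cyclic cubic extension $\textbf{L}=\textbf{k}(\omega)$ with $\omega^3=\xi\in\textbf{k}^*$, the element $\xi$ is a cube in $\textbf{L}^*$ but typically not in $\textbf{k}^*$. The paper only uses such a descent for a degree-$2$ subextension (end of Theorem \ref{DetElRef}), where it does hold, and otherwise avoids the issue by computing the determinant as an explicit product of quantities (such as $P(A)$, $\det(B)\det(B')$, $\tilde\lambda^3\det(D_h)$) that are individually shown to lie in $\textbf{k}^*$ using the Galois cocycle relations. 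By contrast, the paper's route is: prove the determinant of the word depends only on the equivalence data (Lemmas \ref{Subst} and \ref{DepPoint}), construct one explicit relation per configuration via the recursion $M_i(A)$ of Lemma \ref{LargeCalc}, and evaluate the resulting closed-form expression. Your outline does not contain a substitute for either the global degree bookkeeping or the $\textbf{k}$-rationality tracking, so as written it does not constitute a proof.
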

\noindent We take the subgroupoid $G_S$ of $BirMori(S)$, in which all birational maps are between $S,S^{op}$. $G_S$ differs from $BirMori(S)$ only by some isomorphisms. We then choose a set $R$ of representants for every class of $3$,$6$-links, where the two links are equivalent iff they only differ by automorphisms. We then extend the determinant to $G_S$ in the following way: \[det_R: G_S \rightarrow \textbf{k}^*/(\textbf{k}^*)^3\] \[R \mapsto 1\] \[\alpha \in Aut_{\textbf{k}}(S) \mapsto det(\alpha)\] \[\beta \in Aut_{\textbf{k}}(S^{op}) \mapsto det(\beta)^{-1}\] To prove, that this is well defined we need to prove, that the trivial and elementary relations are sent onto $1$. Now in \cite{BSY} it is shown, that the elementary relations are all like in Theorem \ref{MainA} and are thus sent onto $1$ by the determinant. Thus the only relations left to consider are the trivial ones, which is done in Lemma \ref{DetTrivRef}. We can then reduce the determinant to a group homomorphism of $Bir_{\textbf{k}}(S)$ and make a direct sum with the group homomorphism from \cite{BSY}, this map is the ablianization of said group.
\begin{maintheorem} \label{MainB}
Let $q \in \mathcal{E}_3$. The abelianization of the group of birational transformations of a non-trivial Severi-Brauer surface $S$ over $\textbf{k}$ is given by: 
    \[\Phi:Bir_{\textbf{k}}(S) \rightarrow \bigoplus_{p \in \mathcal{E}_3 \setminus \{ q \}} \mathbb{Z}/3\mathbb{Z} \oplus \bigoplus_{p \in \mathcal{E}_6} \mathbb{Z} \oplus \textrm{DET}\]
    where $\textrm{DET} := det(Aut_{\textbf{k}}(S) \subseteq \textbf{k}^* / (\textbf{k}^*)^3$ and $\Phi$ is the direct sum of the determinant and a group homomorphism from Theorem A in \cite{BSY} which sends a Sarkisov link of class $p$ to $1_p$ for all $p \in \mathcal{E}_3 \setminus \{ q \} \cup \mathcal{E}_6$ and ignores the automorphisms and the Sarkisov links of class $q$.
\end{maintheorem}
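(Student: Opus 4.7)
The plan is to establish $\Phi$ as a well-defined surjective group homomorphism and then to identify its kernel with the commutator subgroup of $\mathrm{Bir}_{\mathbf{k}}(S)$. The BSY summand is already a homomorphism. For the determinant summand, the map $\det_R$ defined on generators of the groupoid $G_S$ must descend through the Sarkisov presentation: trivial relations are dealt with by Lemma \ref{DetTrivRef}, and all elementary relations, being of the form described in Theorem \ref{MainA} by the classification in \cite{BSY}, vanish by Theorem \ref{MainA}. Restricting the resulting groupoid homomorphism to the vertex group at $S$ gives $\det : \mathrm{Bir}_{\mathbf{k}}(S) \to \mathbf{k}^*/(\mathbf{k}^*)^3$, and pairing with the BSY map via the universal property of the direct sum yields $\Phi$.

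Surjectivity is direct. The DET summand is the image of $\mathrm{Aut}_{\mathbf{k}}(S) \hookrightarrow \mathrm{Bir}_{\mathbf{k}}(S)$ under $\det$ by the very definition of DET, and the BSY map vanishes on automorphisms. For each $p \in (\mathcal{E}_3 \setminus \{q\}) \cup \mathcal{E}_6$, picking a representative link $\chi_p \in R$ at $p$ and a representative return link $\tau \in R$, the loop $\tau\chi_p \in \mathrm{Bir}_{\mathbf{k}}(S)$ has trivial DET component (since both factors lie in $R$ and $\det_R(R)=1$) and is mapped to the generator of the BSY summand at $p$. These preimages cover all generators of the target.

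For the reverse inclusion $\ker \Phi \subseteq [\mathrm{Bir}_{\mathbf{k}}(S), \mathrm{Bir}_{\mathbf{k}}(S)]$, I would compute $\mathrm{Bir}_{\mathbf{k}}(S)^{\mathrm{ab}}$ directly from the Sarkisov presentation restricted to loops at $S$. Every loop decomposes as a product of automorphisms of $S$, conjugates of automorphisms of $S^{op}$ by some fixed link, and link pairs $\tau\chi$. Modulo commutators, each automorphism of $S$ reduces to its class in $\mathrm{Aut}_{\mathbf{k}}(S)^{\mathrm{ab}} = \mathrm{DET}$ by Lemma \ref{AutAbel}, with an analogous reduction for $\mathrm{Aut}_{\mathbf{k}}(S^{op})$, and each Sarkisov link reduces to its representative in $R$ up to a DET correction. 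The abelianised form of Theorem \ref{MainA} then says that for each $p \in \mathcal{E}_3$, three copies of the representative pair $\ell_p = \tau\chi_p$ equal a specific element of DET, so $\ell_p$ becomes $3$-torsion in $\mathrm{Bir}_{\mathbf{k}}(S)^{\mathrm{ab}}/\mathrm{DET}$; for $p \in \mathcal{E}_6$ no such torsion arises, and $\ell_p$ has infinite order. A further invocation of the same family of elementary relations (the key input from \cite{BSY}) forces the $q$-class generator to lie in the subgroup generated by DET and the classes $p \neq q$, collapsing one $\mathbb{Z}/3\mathbb{Z}$ summand and matching exactly the structure of the target of $\Phi$.

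The principal obstacle is the last step: extracting from the elementary relations the fact that the $q$-generator is expressible modulo commutators in terms of DET and of the classes $p \neq q$. This is expected to follow from the same mechanism by which \cite{BSY} establish well-definedness of their map after ignoring $q$, transferred from the setting of a single target factor to the joint abelianization with the determinant. Once this dependence is in hand, combining (i)--(iv) above exhibits an isomorphism $\mathrm{Bir}_{\mathbf{k}}(S)^{\mathrm{ab}} \xrightarrow{\sim} \bigoplus_{p \in \mathcal{E}_3 \setminus \{q\}} \mathbb{Z}/3\mathbb{Z} \oplus \bigoplus_{p \in \mathcal{E}_6} \mathbb{Z} \oplus \mathrm{DET}$ induced by $\Phi$, completing the proof.
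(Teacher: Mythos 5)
Your overall strategy is the same as the paper's: establish well-definedness of $\det_R$ via Lemma \ref{DetTrivRef} and the vanishing on elementary relations, pair with the homomorphism of Theorem \ref{SBGHom}, check surjectivity, and then compute the kernel from the Sarkisov presentation of Theorem \ref{Isk}. However, the step you yourself flag as ``the principal obstacle'' --- showing that the $q$-class generator is expressible modulo commutators in terms of $\mathrm{DET}$ and the classes $p \neq q$ --- is a genuine gap, and ``this is expected to follow from the same mechanism by which \cite{BSY} establish well-definedness'' is not an argument. The paper avoids the issue entirely by a choice of normal form rather than by proving a dependence relation: every loop at $S$ is rewritten as a product of an automorphism of $S$, a conjugate $\chi_q^{-1}\alpha'\chi_q$ with $\alpha' \in Aut_{\textbf{k}}(S^{op})$, and blocks $B_{p,q} := \chi_{p}^{-1}\chi_q$. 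Since $B_{q,q} = \chi_q^{-1}\chi_q = \mathrm{id}$, the class $q$ simply never occurs as an independent generator; the elementary relation then gives $B_{p,q}^3 \in \mathrm{DET}$ modulo commutators for $p \in \mathcal{E}_3$, and injectivity of the induced map on the abelianization follows by applying $\Phi$ to kill the exponents $a_i$ of the remaining blocks. You should replace your tentative ``dependence of the $q$-generator'' claim by this choice of return link $\chi_q^{-1}$, after which the collapse of the $q$-summand is a tautology.

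A second, unacknowledged gap is your reduction of the ``conjugates of automorphisms of $S^{op}$ by some fixed link'' to $\mathrm{DET}$. With a \emph{fixed} link $\chi$, the element $\chi^{-1}\alpha'\chi$ is in general only a birational self-map of $S$, not an automorphism, so Lemma \ref{AutAbel} does not apply to it directly. The paper resolves this with Lemma \ref{Fixpoints}: one chooses the link through the unique $3$-point fixed componentwise by $\alpha'$, so that the conjugate becomes an honest element of $Aut_{\textbf{k}}(S)$ (and Lemmas \ref{DetTrivRef} and \ref{ImageDET} guarantee the determinant is preserved and that $det(Aut_{\textbf{k}}(S)) = det(Aut_{\textbf{k}}(S^{op}))$, so that $\mathrm{DET}$ really is a single summand rather than a product of two images). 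Without this fixed-point argument your step ``with an analogous reduction for $\mathrm{Aut}_{\mathbf{k}}(S^{op})$'' does not go through. The remaining ingredients of your proposal (well-definedness, surjectivity, $3$-torsion of the $3$-blocks, infinite order of the $6$-blocks detected by $\Phi$, and the fact that determinant-one automorphisms are commutators) match the paper and are correct.
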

\noindent This also implies, that if there are no $6$-points in $S$ the abelianization of $Bir_{\textbf{k}}(S)$ is a $3$-torsion group. This is the first example of a variety in which the abelianization of the group of birational transformations contains non-trivial classes of automorphisms. The surjectivity of the automorphism part remains open and can be looked at from an algebraic perspective via central simple algebras or a geometric perspective via different splitting fields of $3$-points and their norm map.
\section*{Acknowledgements}
\noindent I would like to thank my thesis supervisor Jérémy Blanc for introducing me to this problem and for the countless discussions. I would like to thank the authors of \cite{BSY} to let me continue the work they have already done on this problem, results proven by them in unpublished notes are marked as ($\star$). I would like to thank Gabriel Dill on giving me some examples for field extentions of degree $3$ and results on their norm map. 
\section{Preliminaries}
\noindent
\subsection{Mori Fibre Spaces and Sarkisov links}
\begin{definition} \label{RankRFibration}
    A surjective morphism $X \rightarrow B$ is called rank $r$ fibration for $r \geq 1$, if $X$ is a smooth surface, $-K_X$ is relatively ample, the fibres of $X \rightarrow B$ are connected, the relative picard rank equals $r$ and $B$ is either a point or a smooth curve. A rank $1$ fibration is also called Mori fibre space.
\end{definition}

\begin{definition}
    Let $ X \rightarrow B, X' \rightarrow B'$ be rank $r$ resp. $r'$-fibrations such that there is a birational morphism $X' \rightarrow X$ and there is a morphism $B \rightarrow B'$ such that:
    \[\xymatrix{X' \ar[r] \ar[d] & B' \\ X \ar[r] & B \ar[u] }\]
    We then say $X' \rightarrow B'$ dominates $X \rightarrow B$.
\end{definition}
\begin{definition} \label{RPiece}
    The piece of a rank $r$ fibration $X \rightarrow B$ is the $(r-1)$ dimensional combinatorial polytope constructed as follows: Each rank $d$ fibration dominated by $X \rightarrow B$ is a $(d-1)$-dimensional face. For each pair of faces $X_i \rightarrow B_i, i=1,2$ $X_1 \rightarrow B_1$ lies in $X_2 \rightarrow B_2$ if and only if the second fibration dominates the first one.
\end{definition}
\begin{definition} \label{SarkisovLink}
    Take $X_2 \rightarrow B_2$ a rank $2$ fibration. By the $2$-rays-game there are exactly $2$ Mori fibre spaces $X \rightarrow B, X' \rightarrow B'$ dominated by $X_2 \rightarrow B_2$. 
    \[ \xymatrix{ & X_2 \ar[dl] \ar[dr] &\\X \ar@{-->}[rr] \ar[d]& & X' \ar[d] \\ B \ar[dr]  & & B' \ar[dl] \\ & B_2 &}\]
    The induced map between $X,X'$ is called Sarkisov link. This is also the piece of the fibration $X_2 \rightarrow B_2$.
\end{definition}

\begin{theorem}[\cite{Lamy_2020}] \label{ElementaryRelation}
    A $2$-piece is homeomorphic to a disk and its boundary is a sequence of Sarkisov links, whose product is an automorphism. We say this product encodes an elementary relation between Sarkisov links.
\end{theorem}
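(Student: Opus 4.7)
The plan is to analyse the rank $3$ fibration $Y \to B$ combinatorially via its relative Mori chamber decomposition. The relative N\'eron--Severi group $N^1(Y/B)_{\mathbb{R}}$ has dimension $3$, so the relevant object lives in a $3$-dimensional real vector space. First I would pass to a $2$-dimensional slice: since $-K_Y$ is relatively ample, the closure of the relative movable cone, intersected with an affine normalising hyperplane such as $\{-K_Y \cdot C = 1\}$ for a suitable test curve $C$, yields a compact convex polytope $P$ of dimension $2$. This $P$ will be identified with the piece from Definition \ref{RPiece}.

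Next I would match the faces of $P$ with the objects of Definition \ref{RPiece}. The open interior of $P$ corresponds to the rank $3$ fibration $Y \to B$ itself. Each open edge sits on a codimension-$1$ wall of the relative movable cone: by the cone theorem and relative MMP in dimension two, such a wall produces a divisorial contraction $Y \to Y'$ with $Y' \to B'$ a rank $2$ fibration dominated by $Y$. Each vertex of $P$ is the meeting of two such walls, and performing the two successive extremal contractions yields a Mori fibre space dominated by $Y$. The incidence relations of $P$ then mirror exactly the ``lies in'' order of Definition \ref{RPiece}.

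For the disk property, I would invoke termination in the relative surface MMP: the movable cone admits only finitely many Mori chambers, so $P$ has finitely many vertices, and as the transverse section of a full-dimensional convex cone in $\mathbb{R}^3$ it is topologically a closed disk. Traversing $\partial P$ once therefore yields a cyclic sequence of Sarkisov links $\chi_i \colon X_i \dashrightarrow X_{i+1}$ between the vertex Mori fibre spaces, with $X_{n+1}=X_1$, as required by Definition \ref{SarkisovLink}.

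The final step is to show that the composition $\chi_n \circ \cdots \circ \chi_1 \colon X_1 \dashrightarrow X_1$ is an automorphism. Each $\chi_i$ factors through $Y$ via the $2$-rays game, so the composition lifts to a birational self-map of $Y$ over $B$ defined outside a finite subset; by smoothness of the surface $Y$ this extends to a biregular automorphism of $Y/B$, whose image on $X_1/B$ is the required automorphism. I expect the main obstacle to lie in rigorously establishing that $\partial P$ is a single connected cycle of edges rather than a disjoint union of smaller loops: for geometrically rational surfaces over a perfect field this amounts to connectedness of the boundary of the relative movable cone, which must be checked via a careful combination of surface MMP with Galois descent when $Y$ is not split, and this is the delicate technical step where one really uses that we work with surfaces rather than in higher dimensions.
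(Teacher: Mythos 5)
The paper does not actually prove this statement: it is quoted from \cite{Lamy_2020} without proof, so there is no in-paper argument to compare against. Measured against the proof in that reference, your overall strategy is the right one: realise the piece of a rank $3$ fibration $Y\to B$ as a $2$-dimensional polytope obtained by slicing the relative nef (equivalently, on a surface, movable) cone in $N^1(Y/B)_{\mathbb{R}}$, and match its faces with the dominated rank $2$ and rank $1$ fibrations. Note also that the step you single out as delicate is not: once $P$ is a compact convex polytope of dimension $2$, its boundary is automatically a single circle of edges, so there is no risk of ``disjoint smaller loops''; the genuinely delicate points are finiteness of the chamber structure and the exact bijection between faces of $P$ and dominated fibrations, which you assert rather than prove.

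Two steps as written are wrong. First, not every wall of the cone produces a divisorial contraction: an extremal contraction of a surface with relatively ample anticanonical divisor can also be a fibration onto a curve, and the resulting rank $2$ fibrations $Y\to B'$ with $B'$ a curve (conic bundle structures) are faces of the piece that your face-matching omits. They happen not to occur for Severi--Brauer surfaces, but the theorem as stated is general. Second, and more seriously, your closing step is invalid: a birational self-map of a smooth projective surface defined outside a finite set need not extend to an automorphism --- the standard quadratic involution of $\mathbb{P}^2$ is defined outside three points and is not biregular --- so ``defined outside a finite subset plus smoothness implies biregular'' cannot be used. The correct mechanism is a telescoping argument: each vertex $X_i$ of $P$ receives a birational morphism $\eta_i\colon Y\to X_i$, each edge-link satisfies $\chi_i=\eta_{i+1}\circ\eta_i^{-1}$ because both Mori fibre spaces dominated by the corresponding rank $2$ fibration are reached from $Y$, hence the full composition equals $\eta_{n+1}\circ\eta_1^{-1}$, where $\eta_{n+1}$ and $\eta_1$ are two birational morphisms $Y\to X_1$ contracting the same curves (they correspond to the same face of the cone); two such morphisms differ precisely by an automorphism of the target, which is the asserted automorphism.
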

\begin{theorem}[\cite{Iskovskikh} and \cite{Lamy_2020}] \label{Isk}
    Let $X$ be a smooth projective surface over a perfect field $\textbf{k}$, that is birational to a Mori fibre space. The groupoid 
    \[BirMori(X) := \{X_1 \dashrightarrow X_2 \text{ birat. } : X_1,X_2 \text{ Mori fibre spaces birat. to } X\} \]
    is generated by Sarkisov links and isomorphisms of Mori fibre spaces. Any relation between Sarkisov links is generated by trivial and elementary relations.
\end{theorem}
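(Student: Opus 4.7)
The plan is to work over a Galois splitting field $\textbf{L}/\textbf{k}$ of $S$, lift each automorphism appearing in the relation to a matrix in $GL_3(\textbf{L})$ and each of the two Sarkisov links $\tau,\chi$ to a polynomial triple of degree $2$ in $\textbf{L}[x_0,x_1,x_2]$, and extract a scalar identity modulo cubes from the polynomial identity encoding the hypothesis.

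First I would unwind the hypothesis. Substituting $\tau_i = \alpha_i \tau \beta_i$ and $\chi_i = \gamma_i \chi \delta_i$ into $\tau_3\chi_3\tau_2\chi_2\tau_1\chi_1 = 1$ and collecting consecutive automorphisms yields
\[
\alpha_3 \cdot \tau \cdot (\beta_3\gamma_3) \cdot \chi \cdot (\delta_3\alpha_2) \cdot \tau \cdot (\beta_2\gamma_2) \cdot \chi \cdot (\delta_2\alpha_1) \cdot \tau \cdot (\beta_1\gamma_1) \cdot \chi \cdot \delta_1 \;=\; \mathrm{id}_S.
\]
Writing $u_i := \beta_i\gamma_i \in Aut_{\textbf{k}}(S^{op})$ for $i=1,2,3$ and $v_3 := \delta_3\alpha_2,\; v_2 := \delta_2\alpha_1$ in $Aut_{\textbf{k}}(S)$, multiplicativity of the determinant on each factor rewrites the claim $\prod_{i=1}^3 det(\alpha_i\delta_i)det(\beta_i\gamma_i)^{-1} = 1$ as the scalar identity
\[
\det(\alpha_3)\det(v_3)\det(v_2)\det(\delta_1) \;=\; \det(u_1)\det(u_2)\det(u_3) \quad \text{in } \textbf{k}^*/(\textbf{k}^*)^3.
\]

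Next I would fix a Galois splitting field $\textbf{L}/\textbf{k}$ of $S$ with isomorphisms $\varphi : S_{\textbf{L}} \xrightarrow{\sim} \mathbb{P}^2_{\textbf{L}}$ and $\psi : S^{op}_{\textbf{L}} \xrightarrow{\sim} \mathbb{P}^2_{\textbf{L}}$, and lift each of the seven automorphisms above to a matrix in $GL_3(\textbf{L})$ satisfying the cocycle condition of Lemma~\ref{AutRep}, so that its determinant already lies in $\textbf{k}^*$ and its class modulo cubes agrees with the intrinsic $\det$. Under $\psi \circ (-) \circ \varphi^{-1}$ the link $\chi : S \dashrightarrow S^{op}$ becomes a quadratic Cremona transformation of $\mathbb{P}^2_{\textbf{L}}$ whose base locus is the triple of Galois conjugates of the base $3$-point, and I represent it by a fixed polynomial triple $\tilde\chi \in \textbf{L}[x_0,x_1,x_2]_2^3$; the same is done for $\tau$. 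The hypothesis then becomes the polynomial identity that the twelvefold composition of seven matrix-actions with three copies of $\tilde\tau$ and three copies of $\tilde\chi$ equals $h(x_0,x_1,x_2) \cdot (x_0, x_1, x_2)$ in $\textbf{L}[x_0,x_1,x_2]^3$, where $h$ is a homogeneous form of degree $63 = 2^6 - 1$ accounting for the base-locus cancellations of the six intermediate compositions.

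The heart of the argument is to compare scalars on the two sides of this polynomial identity. Each automorphism matrix contributes a linear multiplicative factor on the leading coefficient, while the three copies of $\tilde\tau$ (respectively $\tilde\chi$) contribute identical polynomial factors repeated three times. Under the general-position hypothesis on the base points of $\tau$ and $\chi$, these two polynomial contributions organise into a pair of perfect cubes in $\textbf{L}[x_0,x_1,x_2]$, whose scalar parts are cubes in $\textbf{L}^*$ and hence trivial modulo $(\textbf{L}^*)^3$; by Galois-invariance of the overall scalar (forced by the relation being defined over $\textbf{k}$), the residual scalar descends to $\textbf{k}^*$. Combining this with Lemma~\ref{Determinant}, which asserts that each matrix determinant descends unambiguously to $\textbf{k}^*/(\textbf{k}^*)^3$, yields exactly the identity displayed at the end of the first paragraph.

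The main obstacle will be the polynomial bookkeeping in the third step: verifying that the excess factor $h$ assembles into a cube, modulo a scalar whose cube class is controlled entirely by the matrix determinants of the seven inserted automorphisms. This depends decisively on the general-position hypothesis, which rules out incidental collisions between the base loci of consecutive links and thereby enforces the triple symmetry responsible for the cube structure; matching the expansion with the normal form for the hexagonal length-six elementary relation between $3$-point links classified in \cite{BSY} should make this cubic symmetry explicit. A secondary technical point is independence of the final scalar from the choice of splitting field, cocycle lifts and polynomial representatives, which reduces to Lemma~\ref{Determinant} applied factor-by-factor.
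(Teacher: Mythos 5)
Your proposal does not address the statement it is supposed to prove. Theorem~\ref{Isk} is the structural result of the Sarkisov program over a perfect field: every birational map between Mori fibre spaces birational to $X$ factors as a composition of Sarkisov links and isomorphisms, and every relation among these generators is generated by the trivial relations and the elementary relations coming from rank~$3$ fibrations. In the paper this is a quoted black box from \cite{Iskovskikh} and \cite{Lamy_2020}; a proof would require running the two-rays game on rank~$2$ fibrations to produce the links, and analysing the polyhedral ``pieces'' of rank~$2$ and rank~$3$ fibrations (Definitions~\ref{RPiece} and~\ref{SarkisovLink}, Theorem~\ref{ElementaryRelation}) to show that the boundary of each $2$-piece is a disk whose boundary word generates all relations. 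Nothing in your proposal touches factorization into links or the generation of relations; you have instead written an outline of a proof of Theorem~\ref{MainA}, the statement that the determinant of an elementary relation between $3$-links is trivial in $\textbf{k}^*/(\textbf{k}^*)^3$. That statement presupposes Theorem~\ref{Isk} rather than proving it.

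Even read as an attempt at Theorem~\ref{MainA}, the argument has a serious gap at exactly the point you flag as ``the main obstacle'': the assertion that the excess homogeneous factor $h$ of degree $63$ assembles into a cube up to a scalar controlled by the seven matrix determinants is the entire content of the theorem, and no mechanism is given for it. The paper does not proceed by such a global polynomial identity; it first shows (Lemmas~\ref{Subst} and~\ref{DepPoint}) that the determinant of the word depends only on the base points of $\tau_1$ and on $\chi_1$, which reduces the problem to exhibiting a single explicit elementary relation with those base points, and then computes that relation in closed form via the matrices $M_i(A)$ of Lemma~\ref{LargeCalc} (and the twisted versions $B(b)$, $D_h$ of Lemmas~\ref{3-pointNonEq}--\ref{HardRel} when the two classes of base points differ), finally descending from degree~$6$ splitting fields to degree~$3$ ones by Galois theory in Theorem~\ref{DetElRef}. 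Without either this reduction-plus-explicit-computation or a genuine proof of your cube-assembly claim, the proposal does not close.
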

\noindent
The trivial relations are $\chi \circ \chi^{-1} = id$ and $\alpha \circ \chi \circ \beta = \chi'$ where $\chi, \chi'$ are Sarkisov links and $\alpha, \beta $ are automorphisms of Mori fibre spaces.
\subsection{Properties of Severi-Brauer surfaces}
For this we need the different possible Sarkisov-Links on Severi Brauer surfaces, which are worked out in \cite{BSY}. The most important $d$-points for those links are the $3$-points, which are the most basic ones, like shown in the next lemma. 
\begin{lemma}[{\cite[Corollary 2.2.2 with $p=3$]{BSY}}]
    Let $S$ be a non-trivial Severi-Brauer surface over a perfect field $\textbf{k}$. Then $S$ does not contain points of degree $d$, where $d$ is not divisible by $3$. On the other hand $S$ contains a point of degree $3$. 
\end{lemma}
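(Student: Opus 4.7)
The plan is to translate both claims into Brauer-theoretic statements via the dictionary between non-trivial Severi-Brauer surfaces and central simple algebras of degree $3$ over $\textbf{k}$ that was recalled in the introduction. Write $\alpha \in Br(\textbf{k})$ for the class associated to $S$. Since $S$ is non-trivial (no $\textbf{k}$-point, by Ch\^atelet), $\alpha \neq 0$; since the exponent of $\alpha$ divides the degree of the algebra, which is $3$, the order of $\alpha$ is exactly $3$.

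For the first assertion I would argue by corestriction-restriction. Let $p \in S$ be a closed point of degree $d$ and set $\textbf{L} := \textbf{k}(p)$. Then $p$ provides an $\textbf{L}$-rational point of $S_\textbf{L}$, so Ch\^atelet forces $S_\textbf{L} \simeq \mathbb{P}^2_\textbf{L}$ and hence $\mathrm{res}_{\textbf{L}/\textbf{k}}(\alpha) = 0$ in $Br(\textbf{L})$. Applying the standard identity $\mathrm{cor}_{\textbf{L}/\textbf{k}} \circ \mathrm{res}_{\textbf{L}/\textbf{k}} = d \cdot \mathrm{id}_{Br(\textbf{k})}$ yields $d\,\alpha = 0$, and since $\alpha$ has order $3$ this gives $3 \mid d$.

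For the second assertion I would exhibit a splitting field of degree exactly $3$ and pull back a rational point. Since $\alpha$ has prime order $3$, the exponent and the index of $\alpha$ both equal $3$; the division algebra $D$ in the class $\alpha$ therefore has degree $3$, and any maximal subfield $\textbf{L} \subset D$ is a splitting field with $[\textbf{L}:\textbf{k}] = 3$. Thus $S_\textbf{L} \simeq \mathbb{P}^2_\textbf{L}$ admits an $\textbf{L}$-point, which corresponds to a morphism $\mathrm{Spec}(\textbf{L}) \to S$ factoring through a unique closed point $q \in S$ with $\textbf{k}(q) \hookrightarrow \textbf{L}$. Hence $\deg(q)$ divides $3$, so $\deg(q) \in \{1, 3\}$; the value $1$ is excluded by the non-triviality of $S$ via Ch\^atelet, leaving $\deg(q) = 3$.

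The main difficulty is not conceptual but bibliographic: one has to cite clean references for the two background inputs, namely the corestriction-restriction formula on Brauer groups and the fact that a maximal subfield of a division algebra of prime degree is a splitting field of the matching minimal degree. Both are covered in Serre~\cite{serre2004corps}, which is already cited in the introduction, so this should be routine.
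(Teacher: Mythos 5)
Your argument is correct, but note that the paper itself offers no proof of this lemma: it is imported verbatim from \cite{BSY} (Corollary 2.2.2 with $p=3$), so there is nothing internal to compare against. Your restriction--corestriction argument for the divisibility statement and the maximal-subfield argument for the existence of a degree-$3$ point are exactly the standard Brauer-theoretic route, and both steps go through (the extension $\textbf{k}(p)/\textbf{k}$ is separable because $\textbf{k}$ is perfect, so $\mathrm{cor}\circ\mathrm{res}=d\cdot\mathrm{id}$ applies). One small imprecision: the inference ``$\alpha$ has prime order $3$, therefore exponent and index both equal $3$'' is not valid in general (exponent $3$ only forces the index to be a power of $3$); what actually pins the index down to $3$ here is that $\alpha$ is represented by an algebra of degree $3$, so the index divides $3$ and is nontrivial. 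With that justification substituted, the proof is complete.
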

\noindent
\begin{definition}
    Let $p$ be a $3$-point of $S$. Then the \textbf{splitting field of $p$} is the smallest field $\textbf{L} \supseteq \textbf{k}$ such that the components of $p$ ($p = \{ p_1, p_2, p_3 \}$) are defined over $\textbf{L}$.
\end{definition}
\noindent
We notice, that since $S_{\textbf{L}}$ has $\textbf{L}$-points we have $S_{\textbf{L}} \simeq \mathbb{P}^2_{\textbf{L}}$, this makes the splitting field an important tool to simplify the birational maps of Severi-Brauer Surfaces, especially because its Galois group is rather easy to handle in some cases.
\begin{lemma} [{\cite[Lemma 2.3.2]{BSY}}] \label{3pLem}
    Let $p$ be a $3$-point of $S$ and $\textbf{L}$ its splitting field. We find:
    \begin{enumerate}
        \item[(1)] The Galois group $Gal(\textbf{L}/\textbf{k})$ acts faithfully and transitively on $\{ p_1, p_2, p_3 \}$. In particular, it is either isomorphic to $\mathbb{Z}/3\mathbb{Z}$ or $Sym_3$ and contains a unique $g \in Gal(\textbf{L}/\textbf{k})$ of order $3$ such that $g(p_1) = p_2, g(p_2) = p_3, g(p_3) = p_1$.
        \item[(2)] Choosing this $g$ there is an $\textbf{L}$-isomorphism $\varphi : S_{\textbf{L}} \rightarrow \mathbb{P}^2_{\textbf{L}}$ such that $\varphi(p_1) = [1:0:0], \varphi(p_2) = [0:1:0], \varphi(p_3) = [0:0:1]$ and $\varphi \circ g \circ \varphi^{-1} = A_g \circ g$ for 
        $A_g = \big(\begin{smallmatrix}
        0 & 0 & \xi\\
        1 & 0 & 0 \\
        0 & 1 & 0
        \end{smallmatrix}\big)$
        for a $\xi \in \textbf{L}^g$.
        \item[(3)] If $q$ is a $3$-point of $S$ having the same splitting field as $p$, there is a $\alpha \in Aut_{\textbf{k}}(S)$ that sends $p$ to $q$. 
    \end{enumerate}
\end{lemma}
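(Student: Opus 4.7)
For (1), a $3$-point is by definition a $Gal(\bar{\textbf{k}}/\textbf{k})$-orbit of size $3$ on $S(\bar{\textbf{k}})$, so the action of $Gal(\textbf{L}/\textbf{k})$ on $\{p_1,p_2,p_3\}$ is transitive and, by the minimality of $\textbf{L}$, faithful. The transitive faithful subgroups of $Sym_3$ are $\mathbb{Z}/3\mathbb{Z}$ and $Sym_3$, each containing exactly two $3$-cycles, one of which cycles $p_1 \to p_2 \to p_3 \to p_1$ after a suitable labeling of $p_2,p_3$; taking $g$ to be this $3$-cycle gives (1).

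For (2), the first step is to use $p_1 \in S(\textbf{L})$ to identify $S_{\textbf{L}} \simeq \mathbb{P}^2_{\textbf{L}}$. Next I check that $p_1,p_2,p_3$ are non-collinear: otherwise the line through them would be $Gal(\textbf{L}/\textbf{k})$-invariant, hence descend to a $\textbf{k}$-divisor on $S$ whose pullback to $S_{\textbf{L}}$ has class equal to the hyperplane class, contradicting that $\mathrm{Pic}(S) \hookrightarrow \mathrm{Pic}(S_{\textbf{L}}) \simeq \mathbb{Z}$ has image $3\mathbb{Z}$ for non-trivial $S$. General position then yields an $\textbf{L}$-projective transformation $\varphi: S_{\textbf{L}} \to \mathbb{P}^2_{\textbf{L}}$ with $\varphi(p_i)=e_i$, unique up to left multiplication by a diagonal matrix. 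Applying $A_g := \varphi g \varphi^{-1} g^{-1} \in \textrm{PGL}_3(\textbf{L})$ to the basis $e_i$ (which is $g$-fixed because its coordinates lie in $\textbf{k}$) and using $g(p_i)=p_{i+1}$ shows $A_g(e_i) \in \textbf{L}^* \cdot e_{i+1}$, so $A_g$ has the shape $\bigl(\begin{smallmatrix} 0 & 0 & a \\ b & 0 & 0 \\ 0 & c & 0 \end{smallmatrix}\bigr)$ for some $a,b,c \in \textbf{L}^*$. I then use the diagonal freedom in $\varphi$ to normalize $b=c=1$ and denote the remaining entry by $\xi$. A direct computation yields $A_g \, g(A_g) \, g^2(A_g) = \mathrm{diag}(\xi, g(\xi), g^2(\xi))$, which must be scalar in $\textrm{PGL}_3$ because $g^3 = 1$ in $Gal(\textbf{L}/\textbf{k})$, and hence $\xi = g(\xi) = g^2(\xi)$, i.e.\ $\xi \in \textbf{L}^g$.

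For (3), I transport $q$ via $\varphi$ into points $q_1',q_2',q_3' \in \mathbb{P}^2(\textbf{L})$ satisfying $q_{i+1}' = A_g g(q_i')$, which are in general position by the same collinearity argument as in (2). Defining $M \in \textrm{PGL}_3(\textbf{L})$ to be the matrix with columns $q_1',q_2',q_3'$ yields an invertible element, and using $A_g e_i = e_{i+1}$ (indices mod $3$) together with the recursion $q_{i+1}'=A_g g(q_i')$ one verifies the equivariance $A_g \, g(M) \, A_g^{-1} = M$. Hence $M$ represents a $\textbf{k}$-automorphism $\alpha := \varphi^{-1} M \varphi$ of $S$ with $\alpha(p_i)=q_i$.

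The main subtlety lies in part (2): the normalization of $A_g$ to exactly two entries equal to $1$ must combine correctly with the cocycle relation $(A_g g)^3 = 1$ so as to pin $\xi$ down precisely in $\textbf{L}^g$, rather than only in $\textbf{L}^*$ modulo the ambiguity introduced by the overall scalar and the remaining diagonal rescaling. The computation in part (3) is then essentially formal once $\xi$ is known to be $g$-fixed, as this is exactly what makes the matrix identity $A_g\,g(M)\,A_g^{-1}=M$ hold on the nose (and not only up to a scalar).
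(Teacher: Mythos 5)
First, note that the paper does not prove this lemma at all: it is imported verbatim as \cite[Lemma 2.3.2]{BSY}, so there is no in-paper argument to compare against and your proposal has to be judged on its own. Parts (1) and (2) of your argument are correct and complete: the descent of a $Gal$-invariant line to a degree-$1$ class in $\mathrm{Pic}(S)$ contradicting $\mathrm{Pic}(S)\simeq 3\mathbb{Z}$ is the right reason for non-collinearity, the diagonal normalization $b=c=1$ is legitimate, and the identity $A_g\,g(A_g)\,g^2(A_g)=\mathrm{diag}(\xi,g(\xi),g^2(\xi))$ together with $(A_g\circ g)^3=\varphi g^3\varphi^{-1}=\mathrm{id}$ does pin $\xi$ into $\textbf{L}^g$ exactly as you say.

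Part (3), however, has a genuine gap in the case $Gal(\textbf{L}/\textbf{k})\simeq Sym_3$. For $\alpha:=\varphi^{-1}M\varphi$ to be defined over $\textbf{k}$ you must check $A_u\,{}^uM\,A_u^{-1}=M$ in $\mathrm{PGL}_3(\textbf{L})$ for \emph{every} $u\in Gal(\textbf{L}/\textbf{k})$, i.e.\ for a full set of generators; you only verify it for the $3$-cycle $g$. This is not a formality: the matrices $N$ with $N(e_i)=q_i'$ satisfying the $g$-condition form a torsor under the norm-one torus $\{\mathrm{diag}(1,c,cg(c)): N_{\textbf{L}/\textbf{L}^g}(c)=1\}$ (one checks $A_g\,{}^uD\,A_g^{-1}=\mathrm{diag}(g(d_3),g(d_1),g(d_2))$), so the $g$-condition alone leaves a positive-dimensional family of candidates, and the condition coming from a transposition $h$ genuinely cuts this down; its solvability is not automatic and needs an argument (e.g.\ that the involution $M\mapsto A_h\,{}^hM\,A_h^{-1}$ on this torsor has a fixed point, or --- cleaner and probably what \cite{BSY} does --- that two embeddings of $\textbf{L}$ into the associated division algebra are conjugate by Skolem--Noether, which hands you the automorphism directly). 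A second, smaller point in (3): the identity $A_g\,g(M)\,A_g^{-1}=M$ holds on the nose only if you choose the \emph{vector} lifts by the recursion $v_{i+1}:=A_g g(v_i)$ (so that $\lambda_1=\lambda_2=1$ and $\lambda_3=\xi$); with arbitrary lifts of the projective points $q_i'$ you only get $M$ times a non-scalar diagonal, which is not the identity in $\mathrm{PGL}_3$. Your write-up defines $M$ by its projective columns, so you should make the choice of lifts explicit. In the cyclic case $Gal(\textbf{L}/\textbf{k})\simeq\mathbb{Z}/3\mathbb{Z}$ your part (3) is correct once this is done.
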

\noindent
We can notice, that $S \rightarrow \{ P\}$ is a Mori fibre space. Now to work with the birational maps we are interested in the Sarkisov links of $S$. Every Severi-Brauer surface corresponds to a central simple algebra $A$. We define $A^{op}$ the ring with the same set but reversed multiplication ($a \times_A b = b \times_{A^{op}} a$). This algebra corresponds to another Severi-Brauer surface $S^{op}$, which is not $\textbf{k}$-isomorphic to $S$ (see \cite{BSY} 3.2.5/3.2.6). $S^{op}$ is especially important, because all Sarkisov links go from $S$ to $S^{op}$. 
\begin{lemma}[3.2.3 and 3.2.7 from \cite{BSY}] \label{Links36}
    Any Sarkisov link $\chi: S \dashrightarrow S'$ is centered on a $3$-point or a $6$-point and its inverse is centered on a $3/6$-point of $S'$ with equal splitting field. We also have $S' \simeq S^{op}$ and that any Mori fibre space birational to $S$ is isomorphic to either $S$ or $S^{op}$
\end{lemma}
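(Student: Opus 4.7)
The plan is to prove the lemma in three stages: classify all Mori fibre spaces birational to $S$, determine the possible centres of Sarkisov links out of $S$, and match the splitting fields across a link. For the classification, I would note that any Mori fibre space $Y \to B$ birational to $S$ is geometrically rational, so by the standard classification over a perfect field either $B$ is a point and $Y$ is a Severi-Brauer surface of Picard rank $1$, or $B$ is a smooth curve and $Y \to B$ is a conic bundle. I would exclude the conic bundle case via the previous lemma's $3$-divisibility of closed-point degrees on $S$: a conic bundle admits closed points of degree coprime to $3$ (from sections, rational components of degenerate fibres, or low-degree points on smooth fibres), and a common resolution with $S$ would produce such points on $S$ as well, contradicting the hypothesis. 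The remaining possibilities are Severi-Brauer surfaces, and among those only $S$ and $S^{op}$ can be birational to $S$, since birational Severi-Brauer surfaces have Brauer classes generating the same cyclic subgroup of $\mathrm{Br}(\textbf{k})$ and $[S]$ has order $3$.

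For the Sarkisov links, I would factor $\chi : S \dashrightarrow S'$ through a dominating rank $2$ fibration $X_2$ whose base is a point. Then $X_2 \to S$ is the blowup of a single closed point $p$ of degree $d$ divisible by $3$, and the complementary extremal contraction $X_2 \to S'$ realises $\chi$. The $2$-ray game produces a second Mori fibre space over a point only when $K^2_{X_2} = 9 - d > 0$, so $d \in \{3, 6\}$; in each case a point $p$ in general position admits the classical Cremona-type involution (for $d = 3$) or a link through a degree $3$ Del Pezzo (for $d = 6$), with target $S' \simeq S^{op}$ by the previous step. Symmetry of the $2$-ray game identifies $\chi^{-1}$ as a link centred at the image of the exceptional divisor over $p$, whose components are defined over the same splitting field because both extremal contractions on $X_2$ are Galois-equivariant over the field over which the components of $p$ split.

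The main obstacle will be the conic bundle exclusion in the classification step: transporting the $3$-divisibility of closed-point degrees across an arbitrary birational equivalence requires a careful Galois-equivariant analysis of the exceptional divisors on a common resolution, and a direct point-counting argument can be delicate. A cleaner alternative would be to invoke the Brauer-class obstruction, using that the generic fibre of a conic bundle has $2$-torsion class in $\mathrm{Br}(\textbf{k}(B))$, which is incompatible with the $3$-torsion class of a non-trivial Severi-Brauer surface.
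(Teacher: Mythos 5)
First, note that the paper does not prove this lemma at all: it is imported verbatim from \cite{BSY} (3.2.3 and 3.2.7), so there is no in-paper argument to compare against; your proposal has to be judged as a reconstruction of the cited proof. As such it has the right overall architecture (classify Mori fibre spaces, then classify links via the $2$-ray game, then track splitting fields equivariantly), but two steps that carry most of the actual content are asserted rather than proved. The first is the claim that a Mori fibre space over a point birational to $S$ is a Severi--Brauer surface: the standard classification only gives a del Pezzo surface of Picard rank $1$ of some degree $1\le d\le 9$. The point-degree divisibility (via Lang--Nishimura) kills the degrees not divisible by $3$, but del Pezzo surfaces of degree $3$ and $6$ of Picard rank $1$ are \emph{not} excluded this way -- a cubic surface, for instance, has all the closed points one can easily produce of degree divisible by $3$, so no contradiction with the $3$-divisibility arises. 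Ruling these out is where the real work in \cite{BSY} lies (there it is done essentially by running the link classification and the Mori-fibre-space classification together along the Sarkisov decomposition, rather than classifying the Mori fibre spaces abstractly first, as you do). Your ``cleaner alternative'' for the conic bundle case also does not work as stated: the class of the generic fibre lives in $\mathrm{Br}(\textbf{k}(B))$ while $[S]$ lives in $\mathrm{Br}(\textbf{k})$, and there is no direct incompatibility between a $2$-torsion class over the function field and a $3$-torsion class over the ground field; the point-degree argument you give first (fibres over the degree $\le 2$ closed points of the conic $B$ yield closed points of degree dividing $4$) is the correct one and should be kept.

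The second genuine gap is the assertion $S'\simeq S^{op}$. Amitsur-type reasoning only gives $\langle [S']\rangle=\langle [S]\rangle$ in $\mathrm{Br}(\textbf{k})$, i.e.\ $S'\simeq S$ or $S'\simeq S^{op}$; saying ``with target $S'\simeq S^{op}$ by the previous step'' does not distinguish the two. Deciding which one occurs requires an explicit cocycle computation for the target of the link: over the splitting field $\textbf{L}$ of the base point the link is the standard quadratic map $\sigma=[yz:xz:xy]$, and conjugating the cocycle $g\mapsto A_g$ (with parameter $\xi$) by $\sigma$ produces the cocycle with parameter $\xi^{-1}$, which is the class of $S^{op}$ and not of $S$. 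This computation is exactly what \cite{BSY} does in 3.2.5--3.2.7, and its trace is visible throughout the present paper in the asymmetry between $A_g$ and $A_g^{op}$. Without it, the whole structure used later -- that every link switches $S$ and $S^{op}$, so that words in $G_S$ alternate and $det_R$ can be defined with $det(\beta)^{-1}$ on $\mathrm{Aut}_{\textbf{k}}(S^{op})$ -- is not established. The remaining parts of your sketch (degree of the centre in $\{3,6\}$ from $9-d>0$ and $3\mid d$, and equality of splitting fields from Galois-equivariance of the two contractions on $X_2$) are sound in outline.
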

\begin{definition}
    Let $\chi : X_1 \dashrightarrow X_2$ and $\chi':X'_1 \dashrightarrow X'_2$ be two Sarkisov links between two rank $1$ del Pezzo surfaces over $\textbf{k}$. We say $\chi, \chi'$ (or respectively their base points) are equivalent if there are isomorphisms $\alpha : X_1 \rightarrow X'_1, \, \beta : X_2 \rightarrow X'_2$ such that $\beta \circ \chi = \chi' \circ \alpha$.
\end{definition}
\noindent
This equivalence classes are needed to work with the trivial relation $\alpha \circ \chi \circ \beta = \chi'$ and are a key component of the group homomorphism found in \cite{BSY}.
\begin{lemma} [{\cite[Lemma 3.3.2]{BSY}}]
    With $\chi, \chi'$ as above the following are equivalent:
    \begin{enumerate}
        \item[(1)] $\chi, \chi'$ are equivalent
        \item[(2)] There is an isomorphism like $\alpha$ from above that sends the base points of $\chi$ to the ones of $\chi'$
        \item[(3)] There is an isomorphism like $\beta$ from above that sends the base points of $\chi^{-1}$ to the ones of $\chi'^{-1}$
    \end{enumerate}
\end{lemma}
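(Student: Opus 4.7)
The plan is to prove the chain $(1)\Rightarrow(2)$ and $(2)\Rightarrow(1)$ directly, and then deduce $(1)\Leftrightarrow(3)$ by applying the same argument to the inverse links $\chi^{-1}$ and $\chi'^{-1}$, observing that condition $(1)$ is manifestly symmetric under $\chi\leftrightarrow\chi^{-1}$ (swap the roles of $\alpha$ and $\beta$).

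The implication $(1)\Rightarrow(2)$ is formal. If $\beta\circ\chi=\chi'\circ\alpha$ with $\alpha$ and $\beta$ isomorphisms, then the indeterminacy locus is preserved under isomorphism on either side, so $\alpha(\mathrm{Bs}(\chi))=\mathrm{Bs}(\chi')$. Since by Lemma \ref{Links36} the base locus of such a link is exactly the single $3$- or $6$-point on which it is centered, this says precisely that $\alpha$ sends the base points of $\chi$ to those of $\chi'$.

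For $(2)\Rightarrow(1)$, which is the substantive direction, I use the rank $2$ resolution of a Sarkisov link from Definition \ref{SarkisovLink}. The link $\chi:X_1\dashrightarrow X_2$ is dominated by a rank $2$ fibration $Y\to B_2$ which factors as a blow-up $Y\to X_1$ centered at the $3$- or $6$-point of $\chi$ followed by a contraction $Y\to X_2$ of the remaining extremal ray; the same applies to $\chi'$ with some $Y'\to B'_2$. By hypothesis $\alpha$ sends the center of the first blow-up to the center of the blow-up resolving $\chi'$, so by the universal property of blowing up it lifts to a unique isomorphism $\tilde\alpha:Y\to Y'$. Because $\tilde\alpha$ is an isomorphism of smooth projective surfaces it respects the Mori cone, and hence sends the unique extremal ray of $Y$ distinct from the one coming from $X_1$ to the analogous ray on $Y'$. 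Consequently $\tilde\alpha$ descends through the second contraction to a uniquely determined isomorphism $\beta:X_2\to X'_2$, and chasing the resulting commutative diagram yields $\beta\circ\chi=\chi'\circ\alpha$.

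The main delicate point is guaranteeing that the lifted $\tilde\alpha$ actually descends through the contraction to $X_2$, i.e.\ that it identifies the curves contracted by $Y\to X_2$ with those contracted by $Y'\to X'_2$. This is where the intrinsic characterization of these curves as the extremal ray of $\overline{NE}(Y)$ other than the one produced by the exceptional divisor over $X_1$ is used; alternatively one may invoke the explicit list of links in \cite{BSY} (Lemma \ref{Links36}) which shows that the whole Sarkisov link is determined by its base $3$- or $6$-point. Everything else in the proof is diagram chasing, so the only genuine content is this Mori-cone preservation step.
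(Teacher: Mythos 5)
Your proposal is correct and takes essentially the same approach as the paper: both arguments reduce to lifting $\alpha$ to the resolution of the link (possible because $\alpha$ matches the blow-up centers) and descending through the second contraction to produce $\beta$. The only differences are organizational and in level of detail --- the paper runs the cycle $(1)\Rightarrow(2)\Rightarrow(3)\Rightarrow(1)$ instead of invoking the $\chi\leftrightarrow\chi^{-1}$ symmetry, and it leaves implicit the descent step that you justify explicitly via the two extremal rays of the dominating rank $2$ fibration.
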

\begin{proof}
    \underline{(1) $\implies$ (2)}: Let $p,p'$ be the base points of $\chi,\chi'$. Since $\beta \circ \chi = \chi' \circ \alpha$ we get, that $p=Ind(\chi) = Ind(\chi' \circ \alpha) = \alpha^{-1}(Ind(\chi')) = \alpha^{-1}(p')$.
    \newline
    \underline{(2) $\implies$ (3)}: We define $\beta := \chi' \circ \alpha \circ \chi^{-1}$. We prove that this is an automorphism, then it will also act on the base points as described in (3). We write $\chi = \pi \circ \rho^{-1}, \, \chi' = \pi' \circ \rho'^{-1}$ the minimal resolutions. Then since $\alpha$ sends the points blown-up in $\rho$ to the ones blown-up in $\rho'$ we get, that $\rho'^{-1} \circ \alpha \circ \rho$ is a automorphism sending the points blown-up in $\pi$ to the ones blown-up in $\pi'$, proving that $\beta$ is an automorphism.
    \newline
    \underline{(3) $\implies$ (1)}: The same way as in the second implication we can prove, that $\alpha := \chi'^{-1} \circ \beta \circ \chi$ is an automorphism. This is enough to prove (1).
\end{proof}
\noindent
We define $\mathcal{E}_3, \mathcal{E}_6$ the equivalence class of $3/6$-points/links. The next lemma will give us the elementary relation between Sarkisov links.
\begin{lemma}[{\cite[Lemma 3.3.4]{BSY}}] \label{ElRel}
    The elementary relations are given by $\chi_6 \circ \chi_5 \circ \chi_4 \circ \chi_3 \circ \chi_2 \circ \chi_1 = id$, where for $i= 1,2,3$ we have $\chi_{2i -1}: S \dashrightarrow S^{op}$ are links from the same class and $\chi_{2i}: S^{op} \dashrightarrow S$ are links from the same class.
\end{lemma}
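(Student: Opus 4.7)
The plan is to start from a rank $3$ fibration $Y \to B$, whose $2$-piece gives the elementary relation by Theorem \ref{ElementaryRelation}, and to deduce the stated hexagonal structure from the geometry of non-trivial Severi-Brauer surfaces.

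\emph{Step 1 (reduction to $B$ a point and alternation of vertices).} Since every Mori fibre space birational to $S$ is isomorphic to $S$ or $S^{op}$ (Lemma \ref{Links36}), both of which fibre over a point, the base $B$ of any dominating rank $r$ fibration is itself a point, and in particular $Y$ has absolute Picard rank $3$. Each vertex of the $2$-piece is then a copy of $S$ or $S^{op}$, and because every Sarkisov link swaps $S$ with $S^{op}$ (Lemma \ref{Links36}), the vertices of the cyclic boundary alternate. In particular the number of edges is even, and the odd-indexed links go $S \dashrightarrow S^{op}$ while the even-indexed ones go $S^{op} \dashrightarrow S$.

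\emph{Step 2 (geometric model of $Y$).} The birational morphism $Y \to S$ is a composition of blowups of closed points on $S$, and its exceptional divisors must raise the Picard rank from $1$ to $3$. Since $S$ contains only $d$-points with $3 \mid d$, the blown-up centers form a Galois-stable configuration built from $3$-points and $6$-points contributing total rank $2$. In each such case, $Y$ is a weak del Pezzo surface which over $\bar{\mathbf{k}}$ becomes the blowup of $\mathbb{P}^2_{\bar{\mathbf{k}}}$ at $6$ points in general position, i.e.\ a smooth cubic surface.

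\emph{Step 3 (hexagonal shape from Mori theory on the cubic).} On $Y_{\bar{\mathbf{k}}}$ the $27$ lines span the Mori cone, and contractions to a surface of Picard rank $1$ correspond to choices of $6$ pairwise disjoint $(-1)$-curves. A Sarkisov link in the $2$-piece corresponds to such a choice that descends to $\mathbf{k}$, i.e.\ a Galois-stable collection. Using the explicit cocycle $A_g$ of Lemma \ref{3pLem}, one computes the Galois action on the $27$ lines coming from our two blown-up centers and enumerates the Galois-stable $6$-tuples of disjoint $(-1)$-curves. The action forces each such $6$-tuple to split as two $3$-orbits or a single $6$-orbit; carrying out the count gives exactly $6$ configurations, alternating their targets between $S$ and $S^{op}$. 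Thus the $2$-piece is a hexagon.

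\emph{Step 4 (opposite edges lie in the same class).} Finally, one must show $\chi_1,\chi_3,\chi_5$ all lie in a common equivalence class, and similarly for $\chi_2,\chi_4,\chi_6$. The three $S$-vertices of the hexagon correspond to three choices of contraction $Y \to S$, each contracting a different $6$-curve configuration. Because these configurations are related by a Galois-equivariant symmetry of $Y$ (or, equivalently, because Lemma \ref{3pLem}(3) provides an automorphism of $S$ sending one $3$-point to another with the same splitting field), the composition with the adjacent links differs only by post-composition and pre-composition with elements of $\mathrm{Aut}_{\mathbf{k}}(S)$ and $\mathrm{Aut}_{\mathbf{k}}(S^{op})$. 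This is exactly the definition of equivalence of Sarkisov links.

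The main obstacle is \emph{Step 3}: one has to track, case by case (two $3$-points in general position; one $3$-point and one $6$-point; one $6$-point; degenerate positions), how the Galois group of the splitting field acts on the $27$ lines of the cubic $Y_{\bar{\mathbf{k}}}$, and verify that the Galois-stable $6$-tuples of disjoint $(-1)$-curves are always in bijection with the $6$ vertices of a hexagon. Ensuring that no additional contractions appear in the boundary of the $2$-piece, and that the splitting-field data forces the enumeration to give precisely three configurations descending to $S$ and three to $S^{op}$, constitutes the technical heart of the argument.
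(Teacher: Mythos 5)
The paper never proves this lemma: it is imported from \cite{BSY} (Lemma 3.3.4), with Figure \ref{fig: rel} indicating the intended argument, namely that the $2$-piece of the relevant rank $3$ fibration is a hexagon centred at a del Pezzo surface of degree $3$ and Picard rank $3$. Your strategy is exactly that argument, so the approach is correct; the problem is that the steps carrying the actual content are asserted rather than executed. In Step 2, ``built from $3$-points and $6$-points contributing total rank $2$'' is not quite what you need: a rank $3$ fibration over a point is an honest del Pezzo surface ($-K_Y$ ample, not merely a weak del Pezzo) of Picard rank $3$, so $Y \to S$ contracts exactly two Galois orbits, each of size divisible by $3$, and $K_Y^2 = 9 - (\text{number of blown-up }\bar{\textbf{k}}\text{-points}) \geq 1$ forces both orbits to have size exactly $3$. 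Make this degree count explicit: it is what excludes $6$-points from ever appearing in an elementary relation, which is precisely why the rest of the paper may assign $\mathbb{Z}$ to $6$-links and $\mathbb{Z}/3\mathbb{Z}$ to $3$-links.

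Step 3 is where the lemma lives, and you do not carry it out; you also conflate the faces of the $2$-piece. A Sarkisov link is an \emph{edge}, i.e.\ a rank $2$ fibration, i.e.\ a single Galois orbit of three pairwise disjoint $(-1)$-curves; a Galois-stable sixer is a \emph{vertex}. What must be verified is that among the $27$ lines the only Galois-stable one-orbit sets of pairwise disjoint $(-1)$-curves are $\{E_1,E_2,E_3\}$, $\{E_4,E_5,E_6\}$, $\{L_{12},L_{13},L_{23}\}$, $\{L_{45},L_{46},L_{56}\}$, $\{C_1,C_2,C_3\}$, $\{C_4,C_5,C_6\}$ (six edges), that exactly six of their pairwise unions are sixers (six vertices), and that the resulting incidence graph is a single $6$-cycle whose vertices alternate between contractions onto $S$ and onto $S^{op}$. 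This is a short check using $E_i \cdot C_j = 1 - \delta_{ij}$ and $L_{ij}\cdot L_{kl} = 1 - |\{i,j\}\cap\{k,l\}|$, but it is the proof, not a remark to be deferred. Finally, in Step 4 the appeal to a ``Galois-equivariant symmetry of $Y$'' is unjustified (no automorphism of $Y$ permuting the three contractions onto $S$ need exist); the correct route is the one you mention in parentheses: the Galois group permutes $\{L_{45},L_{46},L_{56}\}$ and $\{C_4,C_5,C_6\}$ exactly as it permutes $\{E_4,E_5,E_6\}$, so the base points of $\chi_1,\chi_3,\chi_5$ all have the splitting field of $q$, and then Lemma \ref{3pLem}(3) together with the equivalence criterion yields the claim. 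None of these steps would fail, but as written the decisive enumeration is missing.
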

\noindent
We can see in the next image, that each link maps the base points of the neighbouring links to each other, this will help in constructing this relation later on. The relation is given by the following hexagon (from \cite{BSY}):
\newline
\begin{figure}[ht] 
	\includegraphics[width=7cm]{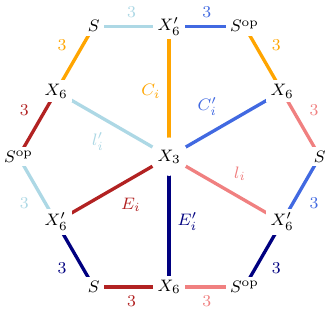}\centering
	\caption{Relation of Lemma~\ref{ElRel}. The centre is a del Pezzo surface $X_3$ of degree $3$ of Picard rank $3$. Each segment denotes the blow-up of a point of degree~$3$ (Figure from \protect\cite{BSY}). \label{fig: rel}}
\end{figure}

\begin{theorem} [{\cite[Theorem A]{BSY}}] \label{SBGHom}
    Let $q \in \mathcal{E}_3$ then there is a surjective group homomorphism:
     \[\Phi:Bir_{\textbf{k}}(S) \rightarrow \bigoplus_{p \in \mathcal{E}_3 \setminus \{ q \}} \mathbb{Z}/3\mathbb{Z} \oplus \bigoplus_{p \in \mathcal{E}_6} \mathbb{Z}\]
     which sends the links $\chi$ of equivalence class of $p \in \mathcal{E}_3 \setminus \{ q \} \cup \mathcal{E}_6$ to $1_p$ and the links of equivalence class $q$ to $0$.
\end{theorem}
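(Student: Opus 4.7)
The plan is to apply Theorem~\ref{Isk} and construct $\Phi$ as the restriction to $Bir_{\textbf{k}}(S)$ of a functor $\tilde\Phi$ on the groupoid $BirMori(S)$. By Lemma~\ref{Links36}, $BirMori(S)$ has only two objects $S$ and $S^{op}$, and its morphisms are generated by automorphisms of $S$ and $S^{op}$ together with Sarkisov links between them, each based at a point whose equivalence class lies in $\mathcal{E}_3 \cup \mathcal{E}_6$.

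On generators I would set $\tilde\Phi(\alpha) = 0$ for every Mori fibre space automorphism, $\tilde\Phi(\chi) = 0$ for every Sarkisov link of class $q$, and, for a Sarkisov link $\chi$ of class $p \neq q$, $\tilde\Phi(\chi) = +1_p$ when $\chi: S \dashrightarrow S^{op}$ and $\tilde\Phi(\chi) = -1_p$ when $\chi: S^{op} \dashrightarrow S$, identifying the class of $\chi^{-1}$ with that of $\chi$. The sign convention immediately kills the trivial relation $\chi \circ \chi^{-1} = \mathrm{id}$, and the other trivial relation $\alpha \circ \chi \circ \beta = \chi'$ is respected because $\chi, \chi'$ share an equivalence class by definition and the automorphisms contribute $0$.

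The essential check is the elementary relation of Lemma~\ref{ElRel}: in $\chi_6 \circ \cdots \circ \chi_1 = \mathrm{id}$ the three odd links lie in a common class $p_1$ with orientation $S \dashrightarrow S^{op}$ and the three even links in a class $p_2$ with orientation $S^{op} \dashrightarrow S$, so $\tilde\Phi$ sends this relation to $3 \cdot 1_{p_1} - 3 \cdot 1_{p_2}$. When either $p_i$ lies in $\mathcal{E}_3$ the factor of $3$ is killed by $\mathbb{Z}/3\mathbb{Z}$; when both lie in $\mathcal{E}_6$, a combinatorial inspection of the hexagon in Figure~\ref{fig: rel} (and its $6$-point analogue) shows that each $\chi_{2i}$ is equivalent to $\chi_{2i-1}^{-1}$ up to automorphisms, so $p_1$ and $p_2$ are identified under the inversion convention and the expression vanishes in $\mathbb{Z}$.

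Restricting $\tilde\Phi$ to $\mathrm{Aut}_{BirMori(S)}(S) = Bir_{\textbf{k}}(S)$ yields $\Phi$. Surjectivity is then immediate: for each $p \in (\mathcal{E}_3 \setminus \{q\}) \cup \mathcal{E}_6$, pick Sarkisov links $\chi_p: S \dashrightarrow S^{op}$ of class $p$ and $\chi_q: S \dashrightarrow S^{op}$ of class $q$; then $\chi_q^{-1} \circ \chi_p \in Bir_{\textbf{k}}(S)$ maps to $1_p$. The main obstacle I expect is the $\mathcal{E}_6$ case of the elementary relation: for $\mathcal{E}_3$ classes the $3$-torsion absorbs the triple automatically, but the cancellation in $\mathbb{Z}$ for $6$-point classes is genuine and depends on the precise combinatorics of how the six edges of the rank-$3$ hexagon pair up under inversion up to equivalence.
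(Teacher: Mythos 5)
First, a point of comparison: the paper does not actually prove this statement --- it is imported verbatim from \cite[Theorem A]{BSY} --- so there is no in-text proof to measure you against. Your overall strategy is nevertheless the natural (and surely the intended) one: define a groupoid homomorphism on the generators of $BirMori(S)$ supplied by Theorem~\ref{Isk}, with the sign convention $+1_p$ for links $S \dashrightarrow S^{op}$ and $-1_p$ for links $S^{op} \dashrightarrow S$ (which is indeed the only way to read the theorem statement so that $\chi \circ \chi^{-1} = \mathrm{id}$ is respected), dispose of the trivial relations by that convention together with the definition of equivalence, and then check the elementary relations of Lemma~\ref{ElRel}.

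The gap is in your treatment of the elementary relation. Its image under your $\tilde\Phi$ is $3 \cdot 1_{p_1} - 3 \cdot 1_{p_2}$, and your case analysis does not cover the mixed case $p_1 \in \mathcal{E}_3$, $p_2 \in \mathcal{E}_6$: the $\mathbb{Z}/3\mathbb{Z}$ summand kills $3 \cdot 1_{p_1}$, but $-3 \cdot 1_{p_2}$ survives in $\mathbb{Z}$. Moreover, your proposed rescue in the $(\mathcal{E}_6,\mathcal{E}_6)$ case --- that each $\chi_{2i}$ is equivalent to $\chi_{2i-1}^{-1}$, forcing $p_1 = p_2$ --- is unjustified, and the analogous claim is already false for $3$-points: the two classes of base points occurring in a hexagonal relation are in general \emph{not} equivalent (the paper devotes an entire subsection to exactly that situation). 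The correct resolution is simpler and is what the paper relies on throughout: by Lemma~\ref{ElRel} and Figure~\ref{fig: rel} the central surface of any $2$-piece is a del Pezzo surface of degree $3$, i.e.\ the blow-up of two points of degree $3$, so \emph{every} elementary relation involves only links based at $3$-points (compare Theorem~\ref{DetElRef}, which speaks of ``the two $3$-points of the relation''). Hence the image of any elementary relation lies entirely in the $\bigoplus \mathbb{Z}/3\mathbb{Z}$ part, where $3 \cdot 1_{p_1} - 3 \cdot 1_{p_2} = 0$ automatically, and the $\mathbb{Z}$ summands indexed by $\mathcal{E}_6$ are unconstrained precisely because $6$-links occur in no elementary relation. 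With that correction your argument closes, and the surjectivity step ($\chi_q^{-1}\circ\chi_p \mapsto 1_p$ because links of class $q$ are sent to $0$) is fine as written.
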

\noindent
The reason why one of the links vanishes in the group homomorphism, is to make it surjective. In the proof for the abelianization we will see, why it is necessary to choose a $3$-link to vanish.

\section{The determinant map}
\noindent To find a useful group homomorphism that does not send the automorphisms to the neutral element we want to look at how automorphisms can be represented in the projective plane over a splitting field of $S$. In this chapter we are going to see some properties of the links and a good representation of the birational maps, to work with the elementary relation.
\begin{convention}
    To simplify the notation we always write $\textbf{k}$ for a perfect field and $S$ a non-trivial Severi-Brauer surface over $\textbf{k}$. 
\end{convention}
\noindent
For automorphisms, we find a good representation for an isomorphism to the projective plane over a field over which $S$ splits. This representation is key in defining the group homomorphism.
\begin{lemma}[$\star$] \label{AutRep}
    Let $\textbf{k} \subset \textbf{L}$ be a finite Galois extension such that $S_{\textbf{L}} \simeq \mathbb{P}^2_{\textbf{L}}$ via some isomorphism $\varphi$. Then there are $[A_g] \in \mathrm{PGL}_3(\textbf{L}), \forall g \in Gal(\textbf{L}/\textbf{k})$ such that $\varphi \circ g \circ \varphi^{-1} = [A_g] \circ g$. Then for all $\alpha \in Aut_{\textbf{k}}(S)$ we find a representant $A \in \mathrm{Gl}_3(\textbf{k})$ (unique up to $\textbf{k}^*$) of $\varphi \circ \alpha \circ \varphi^{-1}$ such that 
    \begin{equation} \label{eq:Rep}
        A^g = A_g^{-1}AA_g, \forall g \in Gal(\textbf{L}/\textbf{k})
    \end{equation}
\end{lemma}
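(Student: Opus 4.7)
The plan is to take any lift $B \in \mathrm{GL}_3(\textbf{L})$ of the class $[\varphi \circ \alpha \circ \varphi^{-1}] \in \mathrm{PGL}_3(\textbf{L})$, observe that it satisfies the desired relation only up to an $\textbf{L}^*$-scalar, and then absorb that scalar by rescaling $B$, where the possibility of rescaling is guaranteed by Hilbert's Theorem 90.

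First I would pass to $\mathbb{P}^2_{\textbf{L}}$ and compute the Galois twist of $[B]$. For any $\sigma \in Gal(\textbf{L}/\textbf{k})$, acting on matrices entry-wise, one has the standard identity $[\sigma(B)] = \sigma \circ [B] \circ \sigma^{-1}$ in $\mathrm{Aut}(\mathbb{P}^2_{\textbf{L}})$. Unfolding $[B] = \varphi \circ \alpha \circ \varphi^{-1}$ and using that $\alpha$ is defined over $\textbf{k}$, so $\sigma \circ \alpha = \alpha \circ \sigma$, together with the rewriting $\sigma \circ \varphi^{-1} = \varphi^{-1} \circ [A_\sigma] \circ \sigma$ obtained from $\varphi \circ \sigma \circ \varphi^{-1} = [A_\sigma] \circ \sigma$, collapses the computation to $[\sigma(B)] = [A_\sigma]^{-1}[B][A_\sigma]$. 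Lifting to $\mathrm{GL}_3(\textbf{L})$ yields
\[
\sigma(B) = \lambda_\sigma\, A_\sigma^{-1} B A_\sigma \qquad\text{for some } \lambda_\sigma \in \textbf{L}^*.
\]

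Second, I would verify that $\sigma \mapsto \lambda_\sigma$ is a $1$-cocycle in $Z^1(Gal(\textbf{L}/\textbf{k}), \textbf{L}^*)$. Composing the cocycle relation $\varphi \circ (\sigma\tau) \circ \varphi^{-1} = [A_\sigma]\sigma[A_\tau]\tau = [A_\sigma \sigma(A_\tau)]\sigma\tau$ shows $A_{\sigma\tau} = \nu_{\sigma,\tau}\, A_\sigma \sigma(A_\tau)$ for some $\nu_{\sigma,\tau} \in \textbf{L}^*$. Computing $(\sigma\tau)(B) = \sigma(\tau(B))$ two different ways and using that the scalars $\nu_{\sigma,\tau}$ cancel from $A_{\sigma\tau}^{-1} B A_{\sigma\tau}$, one gets exactly the cocycle identity $\lambda_{\sigma\tau} = \lambda_\sigma \cdot \sigma(\lambda_\tau)$.

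Third, by Hilbert's Theorem 90, $H^1(Gal(\textbf{L}/\textbf{k}), \textbf{L}^*) = 1$, so there exists $\mu \in \textbf{L}^*$ with $\lambda_\sigma = \sigma(\mu)/\mu$ for every $\sigma$. Setting $A := \mu^{-1} B$, which still represents $[\varphi \circ \alpha \circ \varphi^{-1}]$, a direct computation gives $\sigma(A) = \sigma(\mu^{-1})\sigma(B) = \sigma(\mu^{-1})\lambda_\sigma A_\sigma^{-1} B A_\sigma = \mu^{-1} A_\sigma^{-1} B A_\sigma = A_\sigma^{-1} A A_\sigma$, as required. For uniqueness up to $\textbf{k}^*$, if $A'$ is another such representative then $A' = \rho A$ with $\rho \in \textbf{L}^*$, and comparing $\sigma(A') = \sigma(\rho) A_\sigma^{-1} A A_\sigma$ with the required $\sigma(A') = A_\sigma^{-1} A' A_\sigma = \rho A_\sigma^{-1} A A_\sigma$ forces $\sigma(\rho) = \rho$, hence $\rho \in \textbf{L}^{Gal(\textbf{L}/\textbf{k})} = \textbf{k}^*$.

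The main obstacle is recognizing that the obstruction to upgrading the projective identity $[\sigma(B)] = [A_\sigma^{-1} B A_\sigma]$ to a matrix identity lies precisely in $H^1(Gal(\textbf{L}/\textbf{k}), \textbf{L}^*)$; once this is identified, Hilbert 90 makes the obstruction vanish. The bookkeeping with the projective cocycle $A_\sigma$ (whose scalar ambiguity $\nu_{\sigma,\tau}$ must be tracked and then shown to cancel in the cocycle check for $\lambda$) is the only delicate point, but it is a routine unravelling rather than a deep step.
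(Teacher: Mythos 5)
Your proposal is correct and follows essentially the same route as the paper: lift to $\mathrm{GL}_3(\textbf{L})$, observe the relation holds up to a scalar $\lambda_\sigma$, check that $\sigma \mapsto \lambda_\sigma$ is a $1$-cocycle (with the scalar ambiguity of the $A_\sigma$ cancelling under conjugation), and kill it by Hilbert's Theorem 90 after rescaling. The only addition is that you also verify the uniqueness up to $\textbf{k}^*$ explicitly, which the paper asserts without spelling out.
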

\begin{proof}
    The $[A_g]$ exist, since $\varphi \circ g \circ \varphi^{-1} \circ g^{-1}$ is an automorphism and we just fix representants $A_g$ of their class in $\mathrm{PGl}_3(\textbf{L})$. We take a representant $A$ of  $\varphi \circ \alpha \circ \varphi^{-1}$ and find, since $\alpha$ is defined over $\textbf{k}$, that $[A_g] \circ g \circ [A] = \varphi \circ g \circ \alpha \circ \varphi^{-1} = \varphi \circ \alpha \circ g \circ \varphi^{-1} = [A] \circ [A_g] \circ g$ and thus $[A_g \circ A^g] = [A \circ A_g]$. Which implies, that there is $\lambda_g \in \textbf{L}$ such that $\lambda_g A^g = A_g^{-1} A A_g$. We calculate for $g,h \in Gal(\textbf{L}/ \textbf{k})$ 
    \[A^{gh} \lambda_{gh} = A_{gh}^{-1} A A_{gh} = (A_h^{-1})^g A_g^{-1} A A_g A_h^g = \lambda_g (A_h^{-1} A A_h)^g = \lambda_g g(\lambda_h) A^{gh}\]
    Thus $g \mapsto \lambda_g$ is a cocycle of the action of $Gal(\textbf{L}/\textbf{k}),\textbf{L}^*)$, but this is only consists of the class of $1$-cobords by \cite{serre2004corps}. Thus there is $\lambda \in \textbf{L}^*$ such that $\lambda_g = \frac{\lambda}{g(\lambda)}$, by replacing $A$ by $\lambda^{-1} A$ we are finished.
\end{proof}
\begin{definition}
    Let $\textbf{k} \subset \textbf{L}$ be a finite Galois extension such that $S_{\textbf{L}} \simeq \mathbb{P}^2_{\textbf{L}}$ via some isomorphism $\varphi$. Then there are $[A_g] \in \mathrm{PGL}_3(\textbf{L}), \forall g \in Gal(\textbf{L}/\textbf{k})$ such that $\varphi \circ g \circ \varphi^{-1} = A_g \circ g$. The representant $A$ of $\varphi \circ \alpha \circ \varphi^{-1}$ that satisfies \ref{eq:Rep} is called affine representant.  
\end{definition}
\begin{theorem}[$\star$] \label{Determinant}
    Let $\alpha \in Aut_{\textbf{k}}(S)$. For its representant $A$ that satisfies (\ref{eq:Rep}) we get $det(A) \in \textbf{k}^*/(\textbf{k}^*)^3$. Moreover $det(A)$ is independent of the field extension, the isomorphism and the $A_g$. This gives a group homomorphism from $Aut_{\textbf{k}}(S)$ to $\textbf{k}^*/(\textbf{k}^*)^3$. 
\end{theorem}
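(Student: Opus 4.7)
The plan is to verify each claim in turn, exploiting the defining relation $A^g=A_g^{-1}AA_g$ from Lemma \ref{AutRep}. First I would check that $\det(A)\in\textbf{k}^*$: taking determinants on both sides of the relation yields $g(\det A)=\det(A_g^{-1})\det(A)\det(A_g)=\det(A)$ for every $g\in\mathrm{Gal}(\textbf{L}/\textbf{k})$, so $\det(A)$ is fixed by the Galois group and therefore lies in $\textbf{k}^*$. Next I would handle the ambiguity in $A$ itself: by Lemma \ref{AutRep} the affine representant is unique up to a factor $\lambda\in\textbf{k}^*$, and $\det(\lambda A)=\lambda^3\det(A)$, so the image in $\textbf{k}^*/(\textbf{k}^*)^3$ is well defined. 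Replacing each $A_g$ by another lift $\mu_g A_g$ with $\mu_g\in\textbf{L}^*$ leaves the equation $A^g=A_g^{-1}AA_g$ unchanged, so $A$ and $\det(A)$ do not depend on the chosen scalar lifts of the $[A_g]$.

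For independence of the isomorphism $\varphi$, I would let $\varphi'=[B]\circ\varphi$ with $B\in\mathrm{GL}_3(\textbf{L})$. A direct computation shows the new cocycle can be represented by $A'_g=BA_gB^{-g}$ and that $A':=BAB^{-1}$ satisfies $(A')^g=(A'_g)^{-1}A'A'_g$, so it is an affine representant for $\alpha$ with respect to $\varphi'$. Since $\det(BAB^{-1})=\det(A)$, and the only remaining freedom is rescaling by an element of $\textbf{k}^*$ which contributes a cube, the class in $\textbf{k}^*/(\textbf{k}^*)^3$ is unchanged.

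For independence of the splitting field, I would reduce to the case of an inclusion $\textbf{L}_1\subseteq\textbf{L}$ (passing through a common Galois closure), since any two splitting fields are contained in a common finite Galois extension of $\textbf{k}$. Extend $\varphi_1:S_{\textbf{L}_1}\to\mathbb{P}^2_{\textbf{L}_1}$ to $\varphi:S_{\textbf{L}}\to\mathbb{P}^2_{\textbf{L}}$; an affine representant $A$ computed over $\textbf{L}_1$ is still an affine representant over $\textbf{L}$ because for $g\in\mathrm{Gal}(\textbf{L}/\textbf{L}_1)$ one may take $A_g=I$ (the cocycle becomes trivial there, as $\varphi$ is defined over $\textbf{L}_1$) and for other $g$ only the restriction $g|_{\textbf{L}_1}$ matters. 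Hence $\det(A)$ is literally the same element of $\textbf{k}^*$.

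Finally, to see the map is a group homomorphism, I would take $\alpha,\beta\in\mathrm{Aut}_{\textbf{k}}(S)$ with affine representants $A,B$ and compute $(AB)^g=A^gB^g=A_g^{-1}AA_gA_g^{-1}BA_g=A_g^{-1}(AB)A_g$; thus $AB$ is an affine representant of $\alpha\beta$, and $\det(AB)=\det(A)\det(B)$. The main obstacle I expect is not any single step but keeping the bookkeeping straight across the several layers of choices (scalar lifts of $A_g$, the $\textbf{k}^*$-ambiguity of $A$, the $\mathrm{GL}_3(\textbf{L})$-ambiguity of $\varphi$, and enlargement of $\textbf{L}$); in each case the remaining freedom must be shown to contribute only a cube, which is exactly what makes working modulo $(\textbf{k}^*)^3$ essential rather than merely convenient.
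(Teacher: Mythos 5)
Your proposal is correct and follows essentially the same route as the paper's proof: Galois-invariance of $\det(A)$ from the defining relation, the cube ambiguity coming from the $\textbf{k}^*$-scaling of $A$, conjugation of both $A$ and the cocycle by a representant of the change of isomorphism, reduction of field-extension independence to a tower via a common overfield, and the same computation for multiplicativity. The only cosmetic difference is that for the tower $\textbf{L}_1\subseteq\textbf{L}$ the paper descends the representant from the larger field while you extend it from the smaller one, which is equivalent given the uniqueness up to $\textbf{k}^*$.
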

\begin{proof}
    Since $A^g = A_g^{-1} A A_g$ we find $g(det(A)) = det(A), \forall g \in Gal(\textbf{L}/\textbf{k})$. The $(\textbf{k}^*)^3$ is due to the uniqueness of $A$ being only up to $\textbf{k}^*$. For $\alpha, \beta$ with representation $A,B$ we find, that $(AB)^g = A^g B^g = A_g^{-1}A A_g A_g^{-1} B A_g = A_g^{-1} AB A_g$ thus $AB$ represents $\alpha \circ \beta$. Thus this determinant is a homomorphism. \\ Now for the uniqueness, we first notice that our choice of $A_g$ is only unique up to a scalar, which cancels out in $A_g^{-1} A A_g$. If we fix $\textbf{L}$, for two possible isomorphisms $\varphi, \varphi'$ we notice, that $\varphi' = \gamma \circ \varphi$ for a $\gamma \in Aut_{\textbf{L}}(\mathbb{P}^2)$. Using a representant $C$ of $\gamma$, we notice, we now can choose $C A_g (C^{-1})^g$ instead of $A_g$ and $CAC^{-1}$ instead of $A$. Thus we find, that $(CAC^{-1}) = C^g A^g (C^{-1})^g = C^g A_g^{-1} A A_g (C^{-1})^g = (C A_g (C^{-1})^g)^{-1} CAC^{-1} (CA_g (C^{-1})^g)$. Thus $CAC^{-1}$ is our representant, and this does not change the determinant. Now for different field extensions, we first show the independence for $\textbf{L} \subset \textbf{F}$ where we use $\varphi$ defined on $\textbf{L}$ and a extended version of $\varphi$ on $\textbf{F}$. We take $A, A_g \in \mathrm{GL}_3(\textbf{F}), \, g \in Gal(\textbf{F}/\textbf{k})$ such that they satisfy \ref{eq:Rep}. The elements in $Gal(\textbf{F}/\textbf{L})$ are elements from $Gal(\textbf{F}/\textbf{k})$ which are the identity when restricted to $\textbf{L}$. Thus for $g \in Gal(\textbf{F}/\textbf{L})$ we get, that $A_g = \varphi \circ (\varphi^{-1})^g = \varphi \circ \varphi^{-1} = id$. Thus $A_g \in \textbf{F}^*I_3$. Thus $A^g = A_g^{-1} A A_g = A$, which proves, that $A$ is defined over $\textbf{L}$. Now every element $g \in Gal(\textbf{L}/\textbf{k})$ is the restriction of an element from $g' \in Gal(\textbf{F}/\textbf{k})$ and thus $A_g = \varphi \circ (\varphi^{-1})^g = \varphi \circ (\varphi^{-1})^{g'} = \varphi \circ (\varphi^{-1})^g$. Thus $A^g = A^{g'} = A_g^{-1} A A_g$, when we choose the same $A_g = A_{g'}$. Thus over $\textbf{L}$ we can choose the same matrices and the determinant does not change. Now for two extensions $\textbf{k} \subset \textbf{L}, \textbf{k} \subset \textbf{L}'$ using isomorphisms $\varphi, \varphi'$ we take $\textbf{F}:= \textbf{L}\textbf{L}'$ and use the independence for $\textbf{L}, \textbf{L}\textbf{L}'$ and $\textbf{L}', \textbf{L}\textbf{L}'$ and one time the independence for the extensions of $\varphi, \varphi'$.  
\end{proof}
\noindent 
We now want to define a map $det: Bir_{\textbf{k}}(S) \rightarrow \textbf{k}^* / (\textbf{k}^*)^3$ using the determinant. For this, we choose a representant $\chi_p$ of every class $p \in \mathcal{E}_3 \cup \mathcal{E}_6$ of $\textbf{k}$-links and collect them in the set $R$. Then $R,Aut_{\textbf{k}}(S),Aut_{\textbf{k}}(S^{op})$ generate a subgroupoid of $BirMori(S)$, which we call $G_S$. We then want to define the following groupoid homomorphism: \[det_R: G_S \rightarrow \textbf{k}^*/(\textbf{k}^*)^3\] \[R \mapsto 1\] \[\alpha \in Aut_{\textbf{k}}(S) \mapsto det(\alpha)\] \[\beta \in Aut_{\textbf{k}}(S^{op}) \mapsto det(\beta)^{-1}\] A priori this is only defined on the words and not on $G_S$. To see that this is well defined on $G_S$, we need to check, that it sends the elementary and trivial relations to $1$. For the trivial relations we only need to look at $\chi^{-1} \beta \chi \alpha^{-1} = id$ or equivalently $\beta \chi = \chi \alpha$. To do this we need some further tools. 
\begin{lemma} \label{BaseP}
    Let $\chi: S \dashrightarrow S^{op}$ be a Sarkisov link based at $p$ and let $p^{op}$ be the base point of $\chi^{-1}$. Let $\textbf{L}$ the splitting field, and $\varphi, \varphi^{op}$ fitting isomorphisms between $S_{\textbf{L}}$ and $\mathbb{P}^2_\textbf{L}$. If $p,p^{op}$ are $3$-points we use for $\varphi, \varphi^{op}$ the isomorphisms from Lemma \ref{3pLem} and observe $\psi := \varphi^{op} \circ \chi \circ \varphi^{-1} = D\sigma$, where $D$ is a diagonal. If $p,p^{op}$ are $6$-points $\psi := \varphi^{op} \circ \chi \circ \varphi^{-1}$ is the blow-up of $6$ points and blow-down of the 5 conics through respectively $5$ of the points.
\end{lemma}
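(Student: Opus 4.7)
The strategy is to treat the two cases separately, reducing everything to an explicit computation on $\mathbb{P}^2_{\textbf{L}}$. In both cases the underlying picture is that $\chi$ is defined by blowing up a $d$-point on $S$ and contracting a Galois-conjugate configuration of $(-1)$-curves, so $\varphi$ and $\varphi^{op}$ only serve to make this picture coordinate-explicit.

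For the three-point case, I would first invoke Lemma~\ref{3pLem} for both $p$ and $p^{op}$ (they share the same splitting field by Lemma~\ref{Links36}), so that $\varphi$ and $\varphi^{op}$ send the three components of $p$ and of $p^{op}$ to the coordinate points $[1{:}0{:}0], [0{:}1{:}0], [0{:}0{:}1]$, with the ordering chosen so that the distinguished cyclic generator $g \in \mathrm{Gal}(\textbf{L}/\textbf{k})$ acts on both sides by the same cyclic permutation matrix. Since the $3$-point Sarkisov link blows up $p$ and contracts the three Galois-conjugate lines joining pairs of components, $\psi = \varphi^{op} \circ \chi \circ \varphi^{-1}$ is a quadratic birational self-map of $\mathbb{P}^2_{\textbf{L}}$ with base points exactly at the three coordinate points and whose inverse has base points at the three coordinate points as well. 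Any such map is of the form $M \sigma$ with $M$ a monomial matrix, because a right-diagonal factor can be absorbed via $\sigma\cdot \mathrm{diag}(a,b,c) = \mathrm{diag}(bc,ac,ab)\cdot \sigma$; the compatible ordering with respect to $g$ forces the permutation part of $M$ to be trivial, yielding $\psi = D\sigma$ with $D$ diagonal.

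For the six-point case, I would pass to $\mathbb{P}^2_{\textbf{L}}$ via $\varphi$ and use classical del Pezzo surface geometry. Blowing up the six components of $p$ in $\mathbb{P}^2_{\textbf{L}}$ produces a smooth cubic surface of Picard rank $7$; among the $27$ $(-1)$-curves, the only systems of six pairwise disjoint ones other than the exceptional divisors themselves are the strict transforms of the six conics passing through five of the six points (a direct intersection count: two such conics meet in $4-4 = 0$ points on the blow-up). Since $\chi$ lands on $S^{op}$, whose base-change is $\mathbb{P}^2_{\textbf{L}}$, the image is a del Pezzo of degree $9$, so $\psi$ must over $\textbf{L}$ factor through the contraction of such a six-curve system; by Galois-equivariance the only option is the system of conics.

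The main obstacle is the removal of the permutation factor in the three-point case; I expect the six-point case to be essentially bookkeeping of $(-1)$-curves. To handle the permutation, I would use that both $A_g$ appearing in Lemma~\ref{3pLem} for $p$ and for $p^{op}$ are cyclic permutations (with possibly different $\xi$'s), so Galois-equivariance of $\chi$ forces $\psi$ to intertwine them; this normal form rules out any additional permutation, leaving only the diagonal freedom.
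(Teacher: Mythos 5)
Your route is genuinely different from the paper's. The paper never decomposes $\chi$ geometrically at this stage: it first uses Galois\-/equivariance of the homaloidal system to show that all base points of $\psi$ have equal multiplicity $\alpha_0$, then solves Noether's two equations ($3\deg\psi-3=3\alpha_0$, $\deg\psi^2-1=3\alpha_0^2$, and the analogues for six points) to pin down the characteristic $2;1^3$ resp.\ $5;2^6$, and only then identifies the contracted curves by an intersection computation with the exceptional divisors of the resolution. You instead read everything off from the intermediate del Pezzo surface. In the $3$-point case this is clean and arguably more transparent: the degree-$6$ del Pezzo obtained by blowing up $p$ has exactly two triples of pairwise disjoint $(-1)$-curves (the exceptional curves and the strict transforms of the three lines through pairs of components), so $\psi$ is the standard quadratic map up to a monomial matrix. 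One small correction there: the intertwining relation $A_g^{op}\circ\psi^g=\psi\circ A_g$ does \emph{not} force the permutation part of $M$ to be trivial; it only forces it to commute with the $3$-cycle $A_g$, i.e.\ to be a power of that $3$-cycle. The residual cyclic permutation has to be absorbed into the remaining freedom in the choice of $\varphi^{op}$ (the labelling of the components of $p^{op}$ permitted by Lemma~\ref{3pLem}), which is what the paper's phrase ``by our choice of $\varphi$'' is doing.

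In the $6$-point case your key classification claim is false as stated: on a smooth cubic surface the sextets of pairwise disjoint $(-1)$-curves are not only $\{E_1,\dots,E_6\}$ and the six conics $\{G_1,\dots,G_6\}$; there are $72$ of them, one for each marking. For instance $\{E_1,\,G_1,\,F_{23},\,F_{24},\,F_{25},\,F_{26}\}$ is pairwise disjoint ($E_1\cdot G_1=0$, $E_1\cdot F_{2j}=0$, $G_1\cdot F_{2j}=2-1-1=0$, $F_{2j}\cdot F_{2k}=1-1=0$). Your parenthetical intersection count only verifies that the conics \emph{form} such a system, not that nothing else does. The argument is rescued precisely by the Galois\-/equivariance you mention in passing, so it must carry the whole weight: the contracted sextet is the exceptional locus of a $\textbf{k}$-morphism, hence a union of Galois orbits; since $p$ has degree $6$ the Galois group permutes $E_1,\dots,E_6$ transitively, hence also $G_1,\dots,G_6$ transitively, no six of the $F_{ij}$ are pairwise disjoint (at most five, all sharing an index), and mixed sextets are not Galois\-/stable. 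With that repair your argument goes through and recovers the same conclusion as the paper's Noether computation.
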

\begin{proof}
    We find that $\psi$ has $3$/$6$ proper base points. We take $A_g, A_g^{op} \in PGl_3(\textbf{k})$ such that $A_g \circ g = \varphi \circ g \circ \varphi^{-1}$ and similar for $A_g^{op}$ and get, since $\chi$ is $\textbf{k}$-birational that $A_g^{op} \circ g \circ \psi = \psi \circ A_g \circ g$ and thus $A_g^{op} \circ \psi^g = \psi \circ A_g$. We can also see, for a $g \in Gal(\textbf{L}/\textbf{k})$ and a element $C$ from the homaloidal system, since $(A_{g}^{-1 })^* \circ \psi^{g*} \circ A_g^{op*} = \psi^*$, that $C = A_g(C_2^g)$ for another element $C_2$ from the system ($A_g^{op*}$ preserves lines). Then for a $q_i$ the multiplicity of $\varphi(q_i)$ in $C_2$ is the same as of $\varphi(g(q_i)) = A_g(g(\varphi(q_i)))$ (which is one of the components of $\varphi(q)$) in $A_g(C_2^g) = C$. By choosing fitting $g$ we can show that all the base points of $\psi$ have equal multiplicity. Thus in the case of a $3$-point $3deg(\psi) - 3 = 3\alpha_0$ by Noethers relation and thus $deg(\psi) = \alpha_0 + 1$. Now by the second relation we get, that $\alpha_0(\alpha_0 + 2) = (\alpha_0 + 1)^2 - 1 = deg(\psi)^2 - 1 = 3 \alpha_0^2$ which gives us, that $\alpha_0 = 1$ and thus $deg(\psi) = 2$. Thus by our choice of $\varphi$ we get $\psi = \sigma$. Similarly in the $6$-point case we get the characteristic mentioned above with equal methods. Since all base points are proper we find for $\pi_{\psi},\pi_{\psi}'$ the minimal resolution of $\psi$ and the exceptional divisors $E'_i$ of $\pi'_{\psi}$, that $F_i:=\pi_{\psi}(E'_i)$ are curves of degree $2$. Now let $C$ be a general element, then $10 = C \cdot F_i = \tilde C \cdot \tilde F_i + 2\delta_i = 2\delta_i$ (irreducible conics in the plane are smooth, $C$ is general, thus $\tilde C \cdot E'_i = 0$), where $\delta_i$ is the number of base points contained in $F_i$. This gives us $\delta_i = 5$.
\end{proof}
\noindent
Thus in the $3$-point case $Exc(\chi)$ is a \textbf{k}-curve, with $3$ $\bar{\textbf{k}}$ components that correspond to lines through $2$ of the $3$ base points. Next we look at the image of $3$-points under Sarkisov links.
\begin{lemma} \label{ImageOf3P}
    Let $p,q$ be two different $3$-points of S and $\chi$ a Sarkisov link based at $q$. Then $\chi(p)$ is a $3$-point with the same splitting field as $p$.
\end{lemma}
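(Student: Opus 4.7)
The plan is to work over a Galois extension $\textbf{L}/\textbf{k}$ containing both splitting fields $\textbf{L}_p$ and $\textbf{L}_q$, using the isomorphism $\varphi$ from Lemma \ref{3pLem} adapted to $q$ so that, by Lemma \ref{BaseP}, $\psi := \varphi^{op} \circ \chi \circ \varphi^{-1} = D\sigma$ with $D$ diagonal and $\sigma$ the standard Cremona involution. Since $p$ and $q$ are distinct $3$-points, their supports are disjoint, so $\chi$ is defined at each component $p_i$ and $\chi(p_i)$ is a well-defined point of $S^{op}(\textbf{L})$. By $\textbf{k}$-rationality of $\chi$, the set $\{\chi(p_1),\chi(p_2),\chi(p_3)\}$ is Galois-stable, and the Galois action on it corresponds to the one on $\{p_1,p_2,p_3\}$ via $\chi$ as soon as we know the three images are distinct; showing this is the key step.

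To establish distinctness I would split into cases according to whether the $\varphi(p_i)$ meet the three coordinate lines, which form the exceptional locus of $\psi$. In the generic case none of them does, so $\sigma$ and hence $\psi$ is a local isomorphism at each $\varphi(p_i)$; the three images are automatically distinct, and the bijection $\{p_i\}\leftrightarrow\{\chi(p_i)\}$ is Galois-equivariant, so $\chi(p)$ is a $3$-point whose pointwise Galois stabilisers match those of $p$, giving the same splitting field. In the degenerate case some $\varphi(p_i)$ lies on a coordinate line; lifting to $\textbf{L}$ the order-$3$ cycling element $g\in \textrm{Gal}(\textbf{L}_q/\textbf{k})$ of Lemma \ref{3pLem} (which permutes the three coordinate lines cyclically) then forces each $\varphi(p_i)$ to sit on a distinct coordinate line, and a direct computation with $\sigma([0:b:c])=[1:0:0]$ (valid because $p\neq q$ excludes $bc=0$) and its cyclic analogues shows that $\psi$ sends the three points to the three coordinate points. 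Hence $\chi(p)=q^{op}$, which is a $3$-point whose splitting field is $\textbf{L}_q$.

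The remaining task in the degenerate case is to verify $\textbf{L}_p=\textbf{L}_q$. Since the $\textrm{Gal}(\textbf{L}_q/\textbf{k})$-action on the three coordinate lines is faithful, no nontrivial element of this group stabilises any $p_i$, so the Galois stabiliser of $p_i$ has trivial image in $\textrm{Gal}(\textbf{L}_q/\textbf{k})$; equivalently $\textbf{L}_p\supseteq\textbf{L}_q$. Combining this with Lemma \ref{3pLem}(1), which gives $\textrm{Gal}(\textbf{L}_p/\textbf{k})\in\{\mathbb{Z}/3\mathbb{Z},Sym_3\}$ and so $[\textbf{L}_p:\textbf{k}]\in\{3,6\}$, and with the fact that an $Sym_3$-Galois extension has no cyclic cubic Galois subfield (the only normal subgroups of $Sym_3$ have indices $1,2,6$), the containment $\textbf{L}_p\supseteq\textbf{L}_q$ is forced to be an equality. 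The main obstacle is precisely this degenerate case: at first glance $\chi(p)=q^{op}$ looks like a counterexample to the lemma, and only the group-theoretic rigidity of $3$-point splitting fields rescues the claim.
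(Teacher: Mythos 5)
Your proof is correct and follows the same overall skeleton as the paper's: both split according to whether $p$ meets the exceptional locus of $\chi$, and in the degenerate case both identify $\chi(p)$ with the base point of $\chi^{-1}$ and then compare splitting fields. Where you genuinely diverge is the final step of the degenerate case. The paper shows directly that no nontrivial element of $\mathrm{Gal}(\textbf{L}_p/\textbf{k})$ fixes all components of $q$ (using that each contracted line carries exactly one $p_i$, so such an element would move some $p_i$ along its line), which immediately gives $\mathrm{Gal}(\textbf{L}_p/\textbf{L}_q)=1$. You instead establish only the containment $\textbf{L}_q\subseteq\textbf{L}_p$ geometrically and then upgrade it to equality by pure group theory: a cubic Galois subextension would correspond to an index-$3$ normal subgroup of $\mathbb{Z}/3\mathbb{Z}$ or $Sym_3$, and the latter has none. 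This buys you a cleaner finish that does not reuse the fine incidence geometry, at the cost of invoking Lemma \ref{3pLem}(1) again; the paper's version is more self-contained within the picture of $\mathrm{Exc}(\chi)$. Your verification that $\chi(p)$ is a $3$-point is also more laborious than the paper's, which gets it for free from the fact that $S^{op}$ has no points of degree $1$ or $2$, so that any Galois-stable set of at most three geometric points must be a single $3$-point; you could adopt that shortcut and skip the explicit distinctness check in the generic case.

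Two small imprecisions, neither fatal. First, your claim that the stabiliser of a \emph{single} $p_i$ has trivial image in $\mathrm{Gal}(\textbf{L}_q/\textbf{k})$ is not justified (an element fixing $p_1$ only forces the coordinate line through $\varphi(p_1)$ to be preserved, which is compatible with a transposition of the other two coordinate points); what you actually need and what your argument does deliver is that the \emph{common} stabiliser of all three $p_i$ fixes all three coordinate lines and hence all components of $q$, which suffices for $\textbf{L}_q\subseteq\textbf{L}_p$. Second, the assertion that one $\varphi(p_i)$ on a coordinate line forces all three onto distinct coordinate lines hides a short case analysis on how the chosen lift of $g$ permutes $\{p_1,p_2,p_3\}$ (trivial action and transpositions both land some $\varphi(p_i)$ on two coordinate lines, i.e.\ on a component of $q$, contradicting $p\neq q$); you should spell this out.
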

\begin{proof}
    Let $g \in G:= Gal(\bar{\textbf{k}}/\textbf{k})$, then we find since $\chi$ is $\textbf{k}$-birational, that $g(\chi(p_i)) = \chi(g(p_i))$ this implies, that $\chi(p)$ is a $d$-point and thus, since $|\chi(p)| \leq 3$ we get $\chi(p)$ is a $3$-point. Let $\textbf{L}$ be the splitting field of $p$ and $\textbf{L}'$ be the one of $\chi(p)$. Since $p$ is defined over $\textbf{L}$ and $\chi$ is defined over $\textbf{k}$ we find $\chi(p)$ is defined over $\textbf{L}$, thus $\textbf{L}' \subseteq \textbf{L}$. \newline 
    If $p \not\in Exc(\chi)$ we can use $\chi^{-1}$ to get $\textbf{L} = \textbf{L}'$. \newline
    If $p \in Exc(\chi)$ we prove, that $p \sim q$. Using a isomorphism to $\mathbb{P}^2$, like the $\varphi$ in Lemma \ref{3pLem} we can see, that $Exc(\chi)$ consists of three curves between two of the respective points of $q$, and every curve is sent onto the point not on it, thus showing $p \sim q$ is actually enough. We have $q_i$ are defined over $\textbf{L}$, and now prove, that no $g \in Gal(\textbf{L}/\textbf{k}) \setminus \{ id \}$ fixes all $q_i$ at the same time. Then we find, that $q$ splits only over $\textbf{L}$ (and not over a strict subfield), which implies, that $\textbf{L}' = \textbf{L}$. Before this we notice that every $p_i$ is in exactly one of the curves in $Exc(\chi)$, elsewise $\chi(q)$ would not be a $3$-point. If there was such a $g$, we can see, that it does not fix all $p_i$. We can assume by reordering, that $g(p_1) = p_2$ and that $p_1 \in C_1 \subseteq Exc(\chi)$ and then $p_2 = g(p_1) \in g(C_1) = C_1$ ($g$ preserves $Exc(\chi)$ and fixes all $q_i$), which is not possible, since every $p_i$ is in exactly one of the three curves of $Exc(\chi)$. 
\end{proof}

\begin{lemma}
    Let $p$ be a $3$-point of $S$ and $q$ be a $3$-point of $S^{op}$ with the same splitting field $\textbf{L}$. We choose isomorphisms $\varphi:S_{\textbf{L}}\rightarrow \mathbb{P}^2_{\textbf{L}}$ and similar $\varphi^{op}$. Then we get for every $\psi: S \dashrightarrow S^{op}$ that $deg(\varphi^{op} \circ \psi \circ \varphi^{-1}) \equiv 2 ($mod $3)$ and for $\psi: S \dashrightarrow S$ that $deg(\varphi \circ \psi \circ \varphi^{-1}) \equiv 1 ($mod $3)$.
\end{lemma}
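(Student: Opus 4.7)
The plan is to identify $d := \deg(\tilde\psi)$ via the obstruction to descent for divisor classes. Since $S_{\overline{\textbf{k}}} \simeq \mathbb{P}^2_{\overline{\textbf{k}}}$, we have $\mathrm{Pic}(S_{\overline{\textbf{k}}}) = \mathbb{Z}\cdot H$ with trivial Galois action; the Leray exact sequence for $\mathbb{G}_m$ on $S$ reads
\[
0 \to \mathrm{Pic}(S) \to \mathrm{Pic}(S_{\overline{\textbf{k}}})^{\mathrm{Gal}} \to \mathrm{Br}(\textbf{k}),
\]
and the rightmost map sends $H \mapsto [S] \in \mathrm{Br}(\textbf{k})$, which has order $3$ since $S$ is non-trivial. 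In particular $\mathrm{Pic}(S) = 3\mathbb{Z}\cdot H$. Symmetrically on $S^{op}$, the generator $H^{op}$ maps to $[S^{op}] = -[S]$.

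I would then prove that the strict-transform pullback $\psi^*\colon \mathrm{Pic}(S^{op}_{\overline{\textbf{k}}})^{\mathrm{Gal}} \to \mathrm{Pic}(S_{\overline{\textbf{k}}})^{\mathrm{Gal}}$ commutes with the obstruction map into $\mathrm{Br}(\textbf{k})$. For this, given a Galois-stable Weil divisor $D$ on $S^{op}_{\overline{\textbf{k}}}$, choose $f_g \in \overline{\textbf{k}}(S^{op})^*$ with $D^g - D = \mathrm{div}(f_g)$; the $2$-cocycle $c(g,h) = f_g\cdot g(f_h)/f_{gh} \in \overline{\textbf{k}}^*$ represents the obstruction in $H^2(\mathrm{Gal}, \overline{\textbf{k}}^*) = \mathrm{Br}(\textbf{k})$. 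Because $\psi$ is $\textbf{k}$-rational, the induced $\overline{\textbf{k}}$-algebra isomorphism $\psi^*\colon \overline{\textbf{k}}(S^{op}) \to \overline{\textbf{k}}(S)$ is Galois equivariant and fixes $\overline{\textbf{k}}^*$ pointwise, so the strict transform $\psi^* D$ admits exactly the same cocycle $c$. Applying this with $D$ representing $H^{op}$, and observing that $\psi^*(H^{op}) = dH$ in $\mathrm{Pic}(S_{\overline{\textbf{k}}})$ by definition of $d$, we conclude that the obstruction of $dH$ equals that of $H^{op}$: $d[S] = -[S]$. Since $[S]$ has order $3$, this forces $d \equiv -1 \equiv 2 \pmod 3$. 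The case $\psi\colon S \dashrightarrow S$ is identical with $H^{op}$ replaced by $H$, giving $d[S] = [S]$, hence $d \equiv 1 \pmod 3$.

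The main obstacle I expect is cleanly formalizing the naturality of the Brauer obstruction under $\textbf{k}$-birational pullback — in particular verifying that the strict transform of a Weil divisor class is compatible with the Galois action on function fields and that the associated cocycle really pulls back to the original one in $\overline{\textbf{k}}^*$. If the cohomological setup is undesirable, an alternative and more hands-on approach uses the Sarkisov factorization (Theorem \ref{Isk}): each link has $\tilde\chi$-degree $2$ or $5$ by Lemma \ref{BaseP} (both $\equiv 2 \pmod 3$), while automorphisms give degree $1$; multiplicativity of $\deg$ modulo $3$ follows because in the Bezout-type composition formula $\deg(\tilde\phi_2\tilde\phi_1) = \deg(\tilde\phi_2)\deg(\tilde\phi_1) - \delta$ the base points of a conjugated Sarkisov link form a single Galois orbit of size $3$ or $6$ on which the relevant multiplicities are constant, so $3 \mid \delta$. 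Induction on the number of links in a Sarkisov decomposition of $\psi$ then yields $\deg(\tilde\psi) \equiv 2^n \pmod 3$, which gives $\equiv 1$ when $\psi\colon S \dashrightarrow S$ ($n$ even) and $\equiv 2$ when $\psi\colon S \dashrightarrow S^{op}$ ($n$ odd).
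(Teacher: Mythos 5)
Your primary argument is genuinely different from the paper's and is sound in outline. The paper proves the lemma by induction on a Sarkisov decomposition $\psi = \chi_k \circ \dots \circ \chi_1$: each link has characteristic $2;1^3$ or $5;2^6$ by Lemma \ref{BaseP} (so degree $\equiv 2 \bmod 3$), and the induction simultaneously tracks that the multiplicities of the components of each $3$- or $6$-point in the homaloidal system stay equal, so that the drop in Noether's formula is divisible by $3$ --- i.e.\ exactly your ``alternative, hands-on'' route. Your main route via the Brauer obstruction is cleaner and explains the statement conceptually: $\mathrm{Pic}(S)=3\mathbb{Z}H$ because the obstruction of $H$ is the order-$3$ class $[S]$, the class $[S^{op}]=-[S]$, and functoriality forces $d[S]=\pm[S]$. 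It avoids the Sarkisov machinery and the multiplicity bookkeeping entirely, at the cost of importing the Hochschild--Serre exact sequence; the paper's proof stays inside the elementary plane-Cremona toolkit it has already set up and additionally produces the equal-multiplicity statement as a byproduct. One caveat on your cocycle verification: it is not literally true that the strict transform $\widetilde{D}$ ``admits exactly the same cocycle,'' because $\mathrm{div}(\psi^*f_g)$ differs from $\widetilde{D}^g-\widetilde{D}$ by a divisor supported on the curves contracted by $\psi$, so the two cocycles a priori differ by the image of a $1$-cochain valued in $\overline{\textbf{k}}(S)^*$ rather than $\overline{\textbf{k}}^*$. The clean fix is the one you anticipate: pass to a resolution $S \xleftarrow{\rho} Z \xrightarrow{\pi} S^{op}$ defined over $\textbf{k}$, use functoriality of the obstruction under the two $\textbf{k}$-morphisms $\rho^*,\pi^*$, and note that the exceptional part of $\mathrm{Pic}(Z_{\overline{\textbf{k}}})$ is a permutation module whose Galois-invariants are classes of honest $\textbf{k}$-divisors, hence have trivial obstruction; this gives $\mathrm{obs}_S(\rho_*\pi^*H^{op})=\mathrm{obs}_{S^{op}}(H^{op})$ as needed.
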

\begin{proof}
    It is of course true for automorphisms. We can write $\psi = \chi_k \circ \dots \circ \chi_1$  where $\chi_i$ are sarkisov links. We are going to prove this by induction:
    \newline
    \underline{k=1}: We find if $\psi = \chi_1$ we get, that $\varphi^{op} \circ \chi_1 \circ \varphi^{-1}$ is either of characteristic $2;1^3$ or $5;2^6$, by Lemma \ref{BaseP}. We find $2 \equiv 2 ($ mod $ 3)$ and $5 \equiv 2 ($ mod $ 3)$. We also find, that the points belonging to one $3/6$-point have the same multiplicity. \newline
    \underline{$k \rightarrow k+1$}: We define $\sigma_i := \varphi^{op} \circ \chi_i \circ \varphi^{-1}$ for $i$ odd and $\sigma_i := \varphi \circ\chi_i \circ (\varphi^{op})^{-1}$ for $i$ even. Then we need to look at $\sigma_{k+1} \circ \dots \circ \sigma_1$ by induction $deg(\sigma_{k+1} \circ \dots \circ \sigma_2) = 3r + \delta$, where $\delta = 2$ if $k$ is odd and $\delta = 1$ if $k$ is even. If $\sigma_1$ is of characteristic $2;1^3$ we write $\alpha$ for the multiplicity of the base points of $\sigma_1^{-1}$ in $\sigma_{k+1} \circ \dots \circ \sigma_2$. Then we get, that $deg(\sigma_{k+1} \circ \dots \circ \sigma_1) = 2(3r + \delta) - 3\alpha = 3(2r - \alpha) + 2 \delta$ which is equivalent to $1$ modulo $3$ if $\delta = 2$ and to $2$ modulo $3$ if $\delta = 1$. The multiplicity of the base points of $\sigma_1^{-1}$ are now $3r + \delta - 2 \alpha$, so they are equal. The multiplicity of the other points are still the same. Now if the characteristic of $\sigma_1$ is $5;2^6$. Then $\sigma_1$ is the blow-up of $6$ points and the blow down of the $6$ conics through $5$ of the points. Let $\alpha$ be the multiplicity of the base points of $\sigma_1^{-1}$ in $\sigma_{k+1} \circ \dots \circ \sigma_2$, then we find the intersection of the strict transform of a element $C$ of the homaloidal map of $\sigma_{k+1} \circ \dots \circ \sigma_2$ with the strict transform of the conics is $2(3r + \delta) - 5\alpha$, this is then the new multiplicity of those points. The other points keep their multiplicity. Using Noethers first equality we find, that $deg(\sigma_{k+1} \circ \dots \circ \sigma_1) = 5(3r + \delta) - 12 \alpha = 3(5r - 4 \alpha) + 5\delta$ which is equivalent to $1$ if $\delta =2$ and equivalent to $2$ if $\delta = 1$.
\end{proof}
\noindent
For the next Theorem we will use $3$-points with a splitting field that is a degree $3$ extension. Thus we make the following convention.
\begin{convention}
    For a splitting field $\textbf{L}$ of a $3$-point $p := \{ p_1, p_2, p_3\}$ of $S$ we always take $g$ to be the element of the Galois group such that $g(p_1) = p_2, \, g(p_2) = p_3, \, g(p_3) = p_1$. We write $\varphi$ for an $\textbf{l}$-isomorphism between $S_{\textbf{L}}$ and $\mathbb{P}^2_\textbf{L}$ with the properties given in Lemma \ref{3pLem}. We also write $A_g$ for the matrices defined in \ref{3pLem}. 
\end{convention}

\begin{proposition} \label{AffineRep}
    Let $p$ be a $3$-point of $S$ whose splitting field $\textbf{L}$ satisfies $[\textbf{L}:\textbf{k}]=3$. We then take the conventional $\varphi, \varphi^{op}$ for $p$ and a $3$-point $q$ of $S^{op}$ with the same splitting field. We find, that $Gal({\textbf{L}/\textbf{k}})$ is generated by our $g$. Now let $\psi: S_{\textbf{k}} \dashrightarrow S^{op}_{\textbf{k}}$ and we can find $F_0,F_1,F_2 \in k[x,y,z]$ homogenous such that $\varphi^{op} \circ \psi \circ \varphi^{-1} = [F_0:F_1:F_2]$, $d$ such that its degree is $3d + 2$ and:
    \begin{equation} \label{eq:RepB}\xi^{i(d_1+1)} (F_0^{g^i}, F_1^{g^i}, F_2^{g^i}) = (A_{g^i}^{op})^{-1} \circ (F_0,F_1,F_2) \circ A_{g^i}, \, i=0,1,2 \end{equation}
    Now let $\psi: S_{\textbf{k}} \dashrightarrow S_{\textbf{k}}$, we can find $G_0,G_1,G_2 \in k[x,y,z]$ homogenous such that $\varphi \circ \psi \circ \varphi^{-1} = [G_0:G_1:G_2]$, $d_1$ such that its degree is $3d_1 + 1$ and:
    \begin{equation}\label{eq:RepB2} \xi^{i\cdot d_2} (G_0^{g^i}, G_1^{g^i}, G_2^{g^i}) = A_{g^i}^{-1} \circ (G_0,G_1,G_2) \circ A_{g^i}, \, i=0,1,2 \end{equation}
\end{proposition}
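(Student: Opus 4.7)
The plan is to write $\tilde\psi := \varphi^{op}\circ\psi\circ\varphi^{-1}$ in homogeneous coordinates, use the $\textbf{k}$-rationality of $\psi$ to translate the Galois action into a matrix cocycle condition on the defining polynomials, and then exploit $g^3 = \mathrm{id}$ together with Hilbert~90 to normalize the resulting scalar to $\xi^{d+1}$ (respectively $\xi^{d_1}$).

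First I would write $\tilde\psi = [F_0:F_1:F_2]$ for some homogeneous $F_i \in \textbf{L}[x,y,z]$ of common degree $3d+2$ (by the preceding degree lemma). From the relation $\varphi\circ g\circ\varphi^{-1} = A_g\circ g$, rewriting it as $g\circ\varphi^{-1} = \varphi^{-1}\circ A_g\circ g$ and then Galois-conjugating, one extracts $\varphi^g = A_g^{-1}\varphi$ and similarly $(\varphi^{op})^g = (A_g^{op})^{-1}\varphi^{op}$. Since $\psi^g = \psi$, these combine to give $\tilde\psi^g = (A_g^{op})^{-1}\tilde\psi A_g$, which in coordinates forces $\lambda_g(F_0^g,F_1^g,F_2^g) = (A_g^{op})^{-1}(F_0,F_1,F_2)A_g$ for some scalar $\lambda_g\in\textbf{L}^*$.

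Next I would apply $g$ to this identity twice more and substitute back into itself, using that $A_g,A_g^{op}\in\mathrm{GL}_3(\textbf{k})$ are $g$-invariant (since $\xi,\xi^{op}\in\textbf{L}^g=\textbf{k}$) and that $A_g^3=\xi I$, $(A_g^{op})^3=\xi^{op}I$ by direct computation from the cyclic form in Lemma~\ref{3pLem}. Because $g^3=\mathrm{id}$ and each $F_i$ is homogeneous of degree $3d+2$, the threefold iteration collapses to $F = N_{\textbf{L}/\textbf{k}}(\lambda_g)^{-1}(\xi^{op})^{-1}\xi^{3d+2}F$, giving $N_{\textbf{L}/\textbf{k}}(\lambda_g) = \xi^{3d+2}/\xi^{op}$. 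At this point one needs the identification $\xi^{op}=\xi^{-1}$, which follows because the Brauer class of $S^{op}$ is inverse to that of $S$ and our standing convention fixes $\varphi^{op}$ accordingly; this gives $N_{\textbf{L}/\textbf{k}}(\lambda_g) = \xi^{3(d+1)} = N_{\textbf{L}/\textbf{k}}(\xi^{d+1})$. Hilbert~90 then supplies $\mu\in\textbf{L}^*$ with $\mu/g(\mu) = \lambda_g/\xi^{d+1}$, and replacing $(F_0,F_1,F_2)$ by $\mu^{-1}(F_0,F_1,F_2)$---which does not alter $\tilde\psi$ as a map of $\mathbb{P}^2_{\textbf{L}}$---normalizes $\lambda_g$ to $\xi^{d+1}$. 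The identities for $i=0,2$ then follow by iterating the $i=1$ relation, using the cocycle formulas $A_{g^i}=A_g A_g^g\cdots A_g^{g^{i-1}}=A_g^i$ and $g(\xi)=\xi$.

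The case $\psi:S\dashrightarrow S$ is handled by the same argument with $\varphi^{op}$ replaced by $\varphi$ throughout; the denominator in the norm computation becomes $\xi$ instead of $\xi^{op}$, giving $N_{\textbf{L}/\textbf{k}}(\lambda_g) = \xi^{3d_1+1}/\xi = \xi^{3d_1} = N_{\textbf{L}/\textbf{k}}(\xi^{d_1})$, and Hilbert~90 normalizes to $\lambda_g=\xi^{d_1}$. The only non-formal step in either case is the identification $\xi^{op}=\xi^{-1}$, which is the principal obstacle; it is pinned down by the choice of $\varphi^{op}$ built into the convention and the fact that $p\subset S$ and $q\subset S^{op}$ share a splitting field, so any residual ambiguity can be absorbed into $\varphi^{op}$. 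Everything else is cocycle bookkeeping and a single appeal to Hilbert~90.
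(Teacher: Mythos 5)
Your proposal is correct and follows essentially the same route as the paper's proof: extract the scalar $\lambda_g$ from $\textbf{k}$-rationality, show it is a coboundary, and rescale the $F_i$ by the resulting Hilbert~90 element. The only cosmetic difference is that you iterate the relation three times and use the cyclic-norm form of Hilbert~90 ($N_{\textbf{L}/\textbf{k}}(\lambda_g/\xi^{d+1})=1$), whereas the paper verifies the full $1$-cocycle condition for all pairs in $\mathrm{Gal}(\textbf{L}/\textbf{k})$ and invokes $H^1(\mathrm{Gal}(\textbf{L}/\textbf{k}),\textbf{L}^*)=\{1\}$; for a cyclic group of order $3$ these are the same statement, and you correctly isolate $\xi^{op}=\xi^{-1}$ as the one non-formal input, which the paper likewise takes from its convention on $A_g^{op}$.
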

\begin{proof}
    Due to Lemma \ref{3pLem} we can take $A_g = 
    \begin{pmatrix} 
        0 & 0 & \xi \\
        1 & 0 & 0 \\
        0 & 1 & 0 
    \end{pmatrix}$, we then also take $A_{id} = I_3$, $A_{g^2} = A_g^2$, then $A_g^3 = \xi I_3$ and similar for $A_g^{op}$ just with $\xi^{-1}$ instead of $\xi$. We will just do the first case, the second will work similarly. We take $F := \varphi^{op} \circ \psi \circ \varphi^{-1}$. Since $\psi$ is $\textbf{k}$ birational we get, that $F \circ A_{g^i} \circ g^i = A_{g^i}^{op} \circ g^i \circ F$ which gives us, that $F^{g^i} = g^{i} \circ F \circ g^{-i} = (A_{g^{i}}^{op})^{-1} \circ F \circ A_{g^{i}}$. Thus there is $\mu_{g^{i}} \in \textbf{L}^*$ such that
    \[(A_{g^{i}}^{op})^{-1} \circ (F_0, F_1, F_2) \circ A_{g^{i}} = (F_0^{g^{i}},F_1^{g^{i}},F_2^{g^{i}})\mu_{g^{i}}\]
    Where $\mu_{id} = 1$. We then find for $u_1,u_2$ in the Galois group, that there is $\lambda_{u_1,u_2} \in \textbf{L}^*$ such that $A_{u_1 \circ u_2} = \lambda_{u_1,u_2} A_{u_1} u_1(A_{u_2})$ and similar $\lambda_{u_1,u_2}^{op}$. Since $Gal(\textbf{L}/\textbf{k}) = \{id,g,g^2 \}$ we get due to our choice of $A_{g^i}$, that all $\lambda_{u_1,u_2} = 1$ except for $\lambda_{g^2,g^2} = \lambda_{g^2,g} = \lambda_{g,g^2} = \xi^{-1}$ and $\lambda_{g^2,g^2}^{op} = \lambda_{g^2,g}^{op} =\lambda_{g,g^2}^{op} = \xi$. We then find:
    \[\mu_{u_1u_2}(F_0^{u_1u_2},F_1^{u_1u_2},F_2^{u_1u_2}) = (\lambda_{u_1,u_2}^{op} A_{u_1}^{op} u_1(A_{u_2}^{op}))^{-1} \circ (F_0,F_1,F_2) \circ (\lambda_{u_1,u_2} A_{u_1} u_1(A_{u_2})) \] 
    \[= \frac{\lambda_{u_1,u_2}^{deg(F)} \mu_{u_1}}{\lambda_{u_1,u_2}^{op}} u_1((A_{u_2}^{op})^{-1} \circ (F_0^{u_1}, F_1^{u_1}, F_2^{u_1}) \circ A_{u_2}) = \frac{\lambda_{u_1,u_2}^{deg(F)} \mu_{u_1} u_1(\mu_{u_2})}{\lambda_{u_1,u_2}^{op}} (F_0^{u1u2},F_1^{u1u2},F_2^{u1u2}) \]
    Thus $\frac{\lambda_{u_1,u_2}^{3d + 2} \mu_{u_1} u_1(\mu_{u_2})}{\lambda_{u_1,u_2}^{op}} = \mu_{u_1,u_2}$ and moreover the fraction $\frac{\lambda_{u_1,u_2}^{3d + 2}}{\lambda_{u_1,u_2}^{op}}$ is equal to $1$ except in the case where $(u_1,u_2) \in \{ (g,g^2), (g^2,g),(g^2,g^2) \}$ in which it is equal to $\xi^{-3(d+1)}$ We then get the equations:
    \[\frac{\mu_{g^2}}{\xi^{2(d+1)}} = \frac{\mu_g}{\xi^{d+1}} \frac{g(\mu_g)}{\xi^{d+1}}, \, \mu_{id} = \frac{\mu_g}{\xi^{(d+1)}} \frac{g(\mu_{g^2})}{\xi^{2(d+1)}} = \frac{\mu_{g^2}}{\xi^{2(d+1)}} \frac{g^2(\mu_g)}{\xi^{d+1}}, \, \frac{\mu_g}{\xi^{d+1}} = \frac{\mu_{g^2}}{\xi^{2(d+1)}}\frac{g^2(\mu_{g^2})}{\xi^{2(d+1)}}\]
    We then observe, that $g^{i} \mapsto \frac{\mu_{g^i}}{\xi^{i(d+1)}}$ is a 1-cocycle. And since by \cite{serre2004corps} $H^1(Gal(\textbf{L}/\textbf{k}),\textbf{L}^*) = \{ 1 \}$ there is $\epsilon \in \textbf{L}^*$ such that $\frac{\mu_{g^{i}}}{ \xi^{i(d+1)}} = \frac{\epsilon}{g^{i}(\epsilon)}$ replacing $F_i$ with $\epsilon^{-1} F_i$ gives the result.
\end{proof}

\begin{definition}
    Let $p$ be a $3$-point of $S$ such that for its splitting field $\textbf{L}$ we have $[\textbf{L}:\textbf{k}]=3$. We then take the conventional $\varphi, \varphi^{op}$ for $p$ and a $3$-point $q$ of $S^{op}$ with the same splitting field. We find, that $Gal({\textbf{L}/\textbf{k}})$ is generated by our $g$. Now let $\psi: S_{\textbf{k}} \dashrightarrow S^{op}_{\textbf{k}}$. For the $F_i \in k[x,y,z]$ found in Lemma \ref{AffineRep} the map $\mathbb{A}^3 \rightarrow \mathbb{A}^3, x \mapsto (F_0(x),F_1(x),F_2(x))$ is called affine representant.
\end{definition}
\noindent
\begin{lemma} \label{matrixL}
    The affine representants of automorphisms of $S$ correspond to the following matricies: 
    \[\begin{pmatrix} 
    a & \xi g(c) & \xi g^2(b) \\
    b & g(a) & \xi g^2(c) \\
    c & g(b) & g^2(a)
\end{pmatrix}\]
where $(a,b,c) \in \textbf{L}^3 \setminus \{ (0,0,0) \}$.
\end{lemma}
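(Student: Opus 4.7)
The plan is to apply Lemma~\ref{AutRep} directly to an automorphism $\alpha \in Aut_{\textbf{k}}(S)$: its affine representant is a matrix $M \in \mathrm{GL}_3(\textbf{L})$, defined up to $\textbf{k}^*$, satisfying
\[ M^g = A_g^{-1} M A_g, \quad \text{equivalently,} \quad M A_g = A_g M^g, \]
where, by the convention preceding Proposition~\ref{AffineRep}, $A_g = \bigl(\begin{smallmatrix} 0 & 0 & \xi \\ 1 & 0 & 0 \\ 0 & 1 & 0 \end{smallmatrix}\bigr)$ with $\xi \in \textbf{L}^g = \textbf{k}$ (the equality by Lemma~\ref{3pLem}, since $g$ generates $Gal(\textbf{L}/\textbf{k})$).

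I would then write $M = (m_{ij})$ and compute both sides of $M A_g = A_g M^g$ explicitly. The left-hand side cyclically permutes the columns of $M$ and scales the final one by $\xi$, while the right-hand side cyclically permutes the rows of $M^g = g(M)$ and scales the top one by $\xi$. Setting $a := m_{11}$, $b := m_{21}$, $c := m_{31}$, the comparison of entries forces
\[ m_{22} = g(a),\ m_{33} = g^2(a),\ m_{32} = g(b),\ m_{13} = \xi g^2(b),\ m_{12} = \xi g(c),\ m_{23} = \xi g^2(c), \]
which is exactly the displayed matrix.

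It remains to confirm that the system is consistent and that the $g^2$-version of the cocycle condition is automatic. Using $g^3 = \mathrm{id}$ and $g(\xi) = \xi$, the redundant equations (such as $m_{11} = g(m_{33})$ or $\xi m_{21} = g(m_{13})$) collapse to tautologies; and applying $g$ to $M A_g = A_g M^g$ together with $g(A_g) = A_g$ yields $M^g A_g = A_g M^{g^2}$, whence $M A_{g^2} = M A_g^2 = A_g^2 M^{g^2} = A_{g^2} M^{g^2}$. Conversely, any matrix of the displayed form manifestly satisfies $M A_g = A_g M^g$, so descends via $\varphi$ to a $\textbf{k}$-defined linear endomorphism of $S$; the condition $(a,b,c) \neq (0,0,0)$ simply rules out $M = 0$. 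The only non-trivial piece is the bookkeeping of Galois indices and $\xi$-factors—no deeper obstacle is expected.
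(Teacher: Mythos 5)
Your proposal is correct and follows essentially the same route as the paper: both start from the relation $A^g = A_g^{-1}AA_g$ of Lemma~\ref{AutRep} (equivalently $AA_g = A_gA^g$) with the explicit $A_g$, and unwind it — the paper column by column via $A_g(g(a_1))=a_2$, $A_g(g(a_2))=a_3$, $A_g(g(a_3))=\xi a_1$, you entry by entry — arriving at the same matrix, with the consistency of the redundant equations guaranteed by $g^3=\mathrm{id}$ and $g(\xi)=\xi$. The only cosmetic difference is that the paper does not bother to verify the converse direction explicitly, which you include.
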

\begin{proof}
Let $\alpha \in Aut_{\textbf{k}}(S)$. Theorem \ref{AffineRep} gives the same result as Lemma \ref{AutRep}, thus that the affine representant $A$ of $\alpha$ satisfies $A^g = A_g^{-1} A A_g$. Let $a_i$ be the columns of $A$. Then the classes of $a_i$ are the image of the coordinate points $[1:0:0],[0:1:0],[0:0:1]$ which form a 3-point. We can observe that $A^g = A_g^{-1} A A_g \Leftrightarrow A_g(g(a1)) = a_2, A_g(g(a_2)) = a_3, A_g(g(a_3)) = \xi a_1$. Thus our matrix is given by:
\begin{equation}
\begin{pmatrix} 
    a & \xi g(c) & \xi g^2(b) \\
    b & g(a) & \xi g^2(c) \\
    c & g(b) & g^2(a)
\end{pmatrix} \end{equation}
Where $a_1 = (a,b,c) \neq (0,0,0)$.
\end{proof}
\noindent Every $\textbf{L}$-birational map of $\mathbb{P}^2$ that has such affine representants corresponds via $\varphi$ to a $\textbf{k}$-birational map of $S$, since it does not change under the Galois action. We also notice, that such affine representants always come with $\textbf{L},\varphi, \varphi^{op}, A_g$. Since this only works in splitting fields, that are degree $3$ extensions, we show in the next lemma the existence of such a field. 
\begin{lemma}
    There is a $3$-point of $S$ whose splitting field $\textbf{L}$ is a degree $3$ extension of $\textbf{k}$.
\end{lemma}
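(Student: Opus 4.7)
The plan is to deduce the existence of such a $3$-point from the classical Wedderburn theorem, which states that every central simple algebra of degree $3$ over an arbitrary field is cyclic, i.e., contains a cyclic Galois extension of degree $3$ as a maximal subfield. Applied to the central simple $\textbf{k}$-algebra $A$ associated with $S$, this produces a cyclic Galois extension $\textbf{L}/\textbf{k}$ of degree $3$ that splits $A$, and hence an $\textbf{L}$-isomorphism $\varphi : S_{\textbf{L}} \to \mathbb{P}^2_{\textbf{L}}$.

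Next, I would take the preimage $P := \varphi^{-1}([1{:}0{:}0])$ and view it as a closed point of $S$. Its residue field is a subfield of $\textbf{L}$ containing $\textbf{k}$; since $[\textbf{L}:\textbf{k}] = 3$ is prime, only $\textbf{k}$ and $\textbf{L}$ are possible, and the former is excluded by the earlier lemma that $S$ admits no $d$-points with $3 \nmid d$. Hence $P$ is a closed point of $S$ of degree $3$, that is, a $3$-point $p$.

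To identify the splitting field of $p$, I would note that the three geometric components of $p$ are precisely the $\mathrm{Gal}(\textbf{L}/\textbf{k})$-conjugates of $P$ and are all defined over $\textbf{L}$, so the splitting field is contained in $\textbf{L}$; it cannot equal $\textbf{k}$ (else $p$ would consist of three $\textbf{k}$-points, which do not exist on $S$); by primality of $[\textbf{L}:\textbf{k}]$ it must therefore equal $\textbf{L}$, a degree-$3$ extension of $\textbf{k}$ as required.

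The only non-routine ingredient is the appeal to Wedderburn's cyclicity theorem for degree-$3$ central simple algebras, for which I would cite a standard source such as Gille--Szamuely's \emph{Central Simple Algebras and Galois Cohomology}. The remainder is a short exercise with residue fields and Galois orbits, made clean by the primality of $3$; I expect no other subtle point to arise, since a naive attempt to take an \emph{already existing} $3$-point and descend to a cyclic subextension fails when its Galois group is $\mathrm{Sym}_3$ (the index-$2$ subgroups are not normal), which is exactly why Wedderburn's input is essential.
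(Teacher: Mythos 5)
Your proof is correct and follows essentially the same route as the paper: the paper obtains the degree-$3$ field $\textbf{L}$ with $S_{\textbf{L}} \simeq \mathbb{P}^2_{\textbf{L}}$ by citing Lemma 2.3.4 of \cite{BSY} (which rests on the same cyclicity statement for degree-$3$ central simple algebras that you invoke via Wedderburn), and then runs exactly your argument — take the preimage of $[1{:}0{:}0]$, pass to its Galois orbit to get a $3$-point, and conclude that the splitting field equals $\textbf{L}$ by primality of $3$ and the nonexistence of points of degree prime to $3$.
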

\begin{proof}
    With Lemma 2.3.4 in \cite{BSY} we find a $\textbf{L}$ such that the field extension of $\textbf{k} \subseteq \textbf{L}$ is of degree 3 and $S$ has points over $\textbf{L}$ (it is isomorphic to $\mathbb{P}^2$ over $\textbf{L}$). We want to prove, that $\textbf{L}$ is a splitting field of a $3$-point. We take the pre-image of $[1:0:0]$ with respect to the isomorphism. Then we take the Galois orbit of this point with respect to $Gal(\textbf{L}/\textbf{k})$. This is then a $d$-point for $d \leq 3$. Thus it must be a $3$-point, since there are no points or $2$-points or $1$-points in $S$. We then get, that the splitting field is in $\textbf{L}$ and is not $\textbf{k}$ thus it must be $\textbf{L}$, since $3$ is prime. 
\end{proof}
\noindent 
A useful tool, to find good splitting fields over which an automorphism is diagonal, is the fixpoint.

\begin{lemma}[$\star$] \label{Fixpoints}
    Let $\alpha \in Aut_{\textbf{k}}(S) \setminus \{ id \}$. Then there is exactly one $3$-point in $S$ whose components are fixed by $\alpha$ and its components are the only $\bar{\textbf{k}}$-points, fixed by $\alpha$.
\end{lemma}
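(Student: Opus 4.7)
The plan is to reduce to linear algebra by passing to the algebraic closure. Fix an isomorphism $\varphi: S_{\bar{\textbf{k}}} \to \mathbb{P}^2_{\bar{\textbf{k}}}$ and let $A \in \mathrm{GL}_3(\bar{\textbf{k}})$ represent $\varphi \circ \alpha \circ \varphi^{-1}$. Then $\bar{\textbf{k}}$-fixed points of $\alpha$ on $S$ correspond via $\varphi$ to one-dimensional eigenspaces of $A$, and since $\alpha \neq \mathrm{id}$, the matrix $A$ is not scalar. The strategy is to rule out every Jordan type of $A$ except the one with three distinct eigenvalues, and then use the absence of low-degree points on $S$ to conclude that the remaining three fixed points form a single $3$-point.

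For the Galois-equivariance, the relation $A^g = A_g^{-1} A A_g$ from Lemma \ref{AutRep} implies that a Galois element $g$ sends the $\lambda$-eigenspace of $A$ to the $g(\lambda)$-eigenspace, so Galois acts on the fixed-point set of $\alpha$ on $S_{\bar{\textbf{k}}}$ via its action on the multiset of eigenvalues of $A$. I would then exclude the non-generic Jordan types. If $A$ has a repeated eigenvalue, the multiset $\{\lambda,\lambda,\mu\}$ forces $g(\lambda)=\lambda$ and $g(\mu)=\mu$ for all $g$, hence $\lambda,\mu \in \textbf{k}$. In each of the three remaining cases one obtains a Galois-fixed isolated $\bar{\textbf{k}}$-fixed point of $\alpha$: for the diagonalizable type $(\lambda,\lambda,\mu)$ the $\mu$-eigenspace gives an isolated fixed point (the other component is a line); for the $2\times 2$ Jordan-block type, the $\mu$-eigenspace (distinguishable from the $\lambda$-one by whether its generalized eigenspace is $1$- or $2$-dimensional); and for a single $3\times 3$ block the unique eigenspace. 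Each such point pulls back via $\varphi^{-1}$ to a $\textbf{k}$-point of $S$, contradicting the non-triviality of $S$.

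Hence $A$ has three distinct eigenvalues and exactly three isolated $\bar{\textbf{k}}$-fixed points $P_1,P_2,P_3 \in S_{\bar{\textbf{k}}}$, which exhaust the $\bar{\textbf{k}}$-fixed points of $\alpha$. The set $\{P_1,P_2,P_3\}$ is Galois-invariant and decomposes into $d$-points of $S$ with $3 \mid d$ by the earlier lemma (Corollary 2.2.2 of \cite{BSY} with $p=3$). Since the total cardinality is $3$, the only admissible partition is a single Galois orbit of size~$3$, giving a unique $3$-point whose components are exactly the $\bar{\textbf{k}}$-fixed points of $\alpha$. The main technical step is the Galois-equivariance bookkeeping, namely translating the cocycle relation $A^g = A_g^{-1} A A_g$ into the induced action on eigenspaces; once this is in hand, both the exclusion of non-generic Jordan types and the final cardinality argument are immediate.
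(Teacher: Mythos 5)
Your proof follows essentially the same route as the paper: both pass to a splitting field, represent $\alpha$ by a matrix satisfying $A^g = A_g^{-1}AA_g$, observe that the twisted Galois action permutes eigenspaces according to its action on the eigenvalues, rule out the degenerate configurations using the absence of $\textbf{k}$-points (and of $\textbf{k}$-rational lines) on a non-trivial Severi-Brauer surface, and conclude that the three remaining fixed points form a single $3$-point because $S$ has no points of degree $1$ or $2$. The only loose end is the subcase $A \sim J_2(\lambda)\oplus J_1(\lambda)$ (a single repeated eigenvalue with a $2\times 2$ block), where there is no second eigenvalue $\mu$ and no isolated fixed point, so your stated mechanism does not literally apply; there the twisted-Galois-stable subspace $\mathrm{Im}(A-\lambda I)$ (or the $2$-dimensional eigenspace, whose projectivization would be a $\textbf{k}$-rational line on $S$) supplies the same contradiction.
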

\begin{proof}
We take the degree $3$ splitting field $\textbf{L}$ of a point $p$ from the last lemma and the according $\varphi$ from Lemma $\ref{3pLem}$. We now take $A$ to be an affine representant of $\alpha$ and see, that ist Eigenvectors correspond exactly to the fixpoints of $\alpha$. Those do not necessaryly need to be defined over $\textbf{L}$, but since the Eigenvalues come from a degree $3$ polynomial we can take $\textbf{L}'$ a Galois extention of $\textbf{k}$ that lies over $\textbf{L}$ such that they are defined. We expand $\varphi$ to $\textbf{L}'$ and replace $A$ by the (possibly with a different scalar due to the new galois group) affine representant of $\alpha$ over $\textbf{L}'$. Now all Eigenvalues are defined. Thus take $v, \lambda$ an Eigenvector and its Eigenvalue. We get, that for all $u \in Gal(\textbf{L}'/ \textbf{k})$ $A^u = A_u^{-1} A A_u$ and thus $A_u u(v)$ is an Eigenvector of $A$ with Eigenvalue $v$. Take $V_1, \dots , V_r$ the Eigenspaces of $A$. And take $V$ the vectorspace generated by all of them. Thus $A_u u(V) = V$. Thus we get, that the preimage of the projectivization of $V$ under $\varphi$ is a twisted linear subvariety and thus $V$ has dimension $3$. But $V = V_1$ is only possible for the identity. If we have $r=2$ and w.l.o.g. $dim(V_1) = 2, \, dim(V_2) = 1$ we get, that for $v, w$ a base of $V_1$ with Eigenvalues $\lambda_1$ we see, that $A_u u(v), A_u u(w)$ are Eigenvectors of Eigenvalue $u(\lambda)$, but $dim(V_2) = 1$ thus we get, they are in $V_1$ and thus $A_uu(V_1)) = V_1$ and thus the preimage of the projectivization is a twisted linear curve, which is impossible, thus $r=3$ and $dim(V_i) = 1$. But then we find, that the preimages of the projectivization of $V_i$ are $3$ $\textbf{L}'$-points, which are as a set Galois invariant, thus they must form a $3$-point.
\end{proof}
\noindent
Next we can finaly prove that the determinant of the trivial relation is trivial.

\begin{lemma}[$\star$] \label{DetTrivRef}
    If for $\alpha \in Aut_{\textbf{k}}(S), \,  \beta \in Aut_{\textbf{k}}(S^{op})$ and $\chi$ a $\textbf{k}$-link of $S$ we have $\beta \chi = \chi \alpha$ then $det(\beta) = det(\alpha)^{-1}$.
\end{lemma}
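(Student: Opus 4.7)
The plan is to express everything via affine representants over a convenient splitting field and to deduce the identity from a Jacobian determinant computation.

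I would first fix a $3$-point $p_0$ of $S$ whose splitting field $\textbf{L}$ satisfies $[\textbf{L}:\textbf{k}]=3$, together with a $3$-point $q_0$ of $S^{op}$ with the same splitting field, and take $\varphi$, $\varphi^{op}$, $A_g$, $A_g^{op}$ as in Lemma \ref{3pLem} and the subsequent convention. Let $A, B \in \mathrm{GL}_3(\textbf{L})$ be the affine representants of $\alpha$ and $\beta$, so that $\det(A), \det(B) \in \textbf{k}^*$ represent $\det(\alpha), \det(\beta)$ modulo cubes, and let $F = (F_0,F_1,F_2)$ of degree $3d+2$ be the affine representant of $\chi$ from Proposition \ref{AffineRep}. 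Conjugating $\beta \chi = \chi \alpha$ by $\varphi$ and $\varphi^{op}$ yields the polynomial identity $B\, F(x) = \mu\, F(A x)$ for some $\mu \in \textbf{L}^*$.

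Next I would check that $\mu$ lies in $\textbf{k}^*$. Applying $g \in \mathrm{Gal}(\textbf{L}/\textbf{k})$ to both sides and substituting the cocycle identities $A^g = A_g^{-1} A A_g$, $B^g = (A_g^{op})^{-1} B A_g^{op}$, and $\xi^{d+1} F^g = (A_g^{op})^{-1}\circ F \circ A_g$, every $\xi$-twist cancels and one obtains $g(\mu) = \mu$, so $\mu \in \textbf{k}^*$. Differentiating $B\,F(x) = \mu\, F(Ax)$ in $x$ and taking determinants gives the polynomial identity
\[
\det(B)\, H(x) \;=\; \mu^3\, H(Ax)\, \det(A), \qquad H := \det \nabla F,
\]
which forces $H(Ax) = \lambda\, H(x)$ with $\lambda := \det(B)/(\mu^3 \det(A)) \in \textbf{k}^*$. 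The problem reduces to showing $\lambda \equiv \det(A) \pmod{(\textbf{k}^*)^3}$.

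The key observation is that $\alpha$ preserves the base locus of $\chi$ (since $\beta$ is biregular), so $A$ permutes the base points of $F$ and hence the irreducible components of its exceptional divisor. By Lemma \ref{BaseP}, these components are the three lines through pairs of base points in the $3$-point case and the six conics through five of the six base points in the $6$-point case; a general-position argument shows they are linearly independent in $\mathrm{Sym}^1(\textbf{L}^3)$, respectively $\mathrm{Sym}^2(\textbf{L}^3)$, and a degree count shows $H$ is proportional to their product. Writing $A^*$ in this basis as a monomial matrix with scalars $\eta_i$ and induced permutation $\sigma_A$, one gets $\lambda = \prod_i \eta_i = \mathrm{sgn}(\sigma_A)\, \det(A^*|_V)$, where $V$ is the relevant symmetric power. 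A diagonal computation gives $\det(A^*|_{\mathrm{Sym}^1}) = \det(A)$ and $\det(A^*|_{\mathrm{Sym}^2}) = \det(A)^4$; both are congruent to $\det(A)$ modulo cubes, and $\mathrm{sgn}(\sigma_A) = \pm 1$ is a cube, so $\lambda \equiv \det(A) \pmod{(\textbf{k}^*)^3}$.

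Combining these, $\det(B) = \mu^3 \lambda \det(A) \equiv \det(A)^2 \pmod{(\textbf{k}^*)^3}$, so $\det(\beta)\det(\alpha) = \det(B)\det(A) \equiv \det(A)^3 \equiv 1$ modulo cubes, which is the desired identity $\det(\beta) = \det(\alpha)^{-1}$. The hard part will be the $6$-point case, where one must carefully establish the linear independence of the six exceptional conics and the factorization $H = c \prod C_i$; the $3$-point case is a short direct computation once the affine representants are in hand, after which the symmetric-power interpretation treats both cases uniformly.
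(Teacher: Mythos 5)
Your strategy is genuinely different from the paper's and, in characteristic $0$ (and more generally whenever the characteristic does not divide the degree of the link), it works: the identity $B\,F(x)=\mu F(Ax)$, the Galois argument showing $\mu\in\textbf{k}^*$, the chain rule giving $\det(B)H(x)=\mu^3H(Ax)\det(A)$, the factorization of $H$ as the product of the contracted curves, their linear independence (evaluate a dependence relation at each base point), and the determinants $\det(A)$ on $\mathrm{Sym}^1$ and $\det(A)^4$ on $\mathrm{Sym}^2$, both $\equiv\det(A)$ modulo cubes, all check out and assemble into $\det(B)\equiv\det(A)^2\equiv\det(A)^{-1}$. This is arguably more uniform than the paper's argument, since it treats the $3$-point and $6$-point cases by the same mechanism and does not need Lemma \ref{Fixpoints}.

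However, there is a genuine gap: the paper works over an arbitrary perfect field, and your central object $H=\det\nabla F$ can vanish identically. By Euler's relation $\nabla F\cdot x=(\deg F)\,F(x)$, the Jacobian matrix of the affine cone has $x$ in its kernel whenever the characteristic divides $\deg F$; concretely, for the quadratic link $\Sigma=(yz,xz,xy)$ one computes $\det\nabla\Sigma=2xyz$, which is identically zero in characteristic $2$, and the same phenomenon kills the degree-$5$ link in characteristic $5$. In those cases your key identity reads $0=0$ and yields no information, while perfect fields of characteristic $2$ or $5$ carrying non-trivial Severi--Brauer surfaces do exist (e.g.\ perfect closures of global function fields). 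A characteristic-free substitute for the Jacobian is the form $P$ with $\chi^{-1}(\chi(x))=P(x)\cdot x$, which is again the product of the contracted curves, but then you must separately relate the resulting scalar to $\det(A)$ and $\det(B)$, which requires the analogous affine identity for $\chi^{-1}$ and is not done in your sketch. For comparison, the paper avoids this entirely by direct computation with normal forms: in the $3$-point case it normalizes the link to $\Sigma$, shows $A,B$ are diagonal matrices times powers of $A_g$, $A_g^{op}$ (transpositions being excluded by a Galois argument), and computes $\Sigma A=\det(D_A)\xi^{i_A}D_A^{-1}(A_g^{op})^{i_A}\Sigma$; in the $6$-point case it conjugates to the fixed $3$-point of $\alpha$ from Lemma \ref{Fixpoints}, so that $A,B$ become diagonal and the coefficient of $x^5$ in the degree-$5$ link forces $B=A^5$. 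You should either restrict your Jacobian argument to good characteristic and supply a separate argument for characteristics $2$ and $5$, or switch to a characteristic-free incarnation of the contracted-curve divisor.
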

\begin{proof}
    If we choose a different representative $\chi' = \gamma_1 \chi \gamma_2$ for $\textbf{k}$-automorphisms $\gamma_1, \gamma_2$ of $S^{op},S$, we can replace $\beta, \alpha$ by $\gamma_1^{-1} \beta \gamma_1, \gamma_2 \alpha \gamma_2^{-1}$. Since their determinant did not change, it is enough to show the lemma for one representant. We take $p,p'$ the base points of $\chi, \chi^{-1}$. Thus we know, that $p$ is fixed as a set by $\alpha$ and $p'$ by $\beta$. Take $p_i,p_i'$ the components of $p,p'$ over $\bar{\textbf{k}}$.  \newline
    We first assume, that $p,p'$ are $3$-points. We now take $\textbf{L}$-isomorphisms $\varphi, \varphi^{op}$ of $S,S^{op}$ to $\mathbb{P}^2$ like in Lemma \ref{3pLem}. Then we can see, that $\varphi^{op} \chi \varphi^{-1}$ has $\{ [1:0:0], [0:1:0], [0:0:1]\}$ as base points and its inverse too. Thus it is equal to $D \sigma$ for some diagonal $D$ (if we choose a nice order on $p'$). Since $\Sigma:= (yz,xz,xy)$ is an affine representant $\sigma$ correponds to a $\textbf{k}$-birational map from $S$ to $S^{op}$. We can thus change our representant and choose $D=I_3$. Now since $p,p'$ are fixed as $3$-points by $\alpha, \beta$, the affine representants $A,B$ of $\alpha, \beta$ are diagonals multiplied by permutation matrices. Those permutation matrices cannot be transpositions. Assume otherwise $\alpha$ would permute $p_1,p_2$ and fix $p_3$. Then if $g \in Gal(\textbf{L}/\textbf{k})$ that corresponds to a $3$-cycle and sends $p_3$ to $p_1$, we get, that $g(p_3) = g(\alpha(p_3)) = \alpha(g(p_3)) = \alpha(p_1) = p_2)$, which is a contradiction. In the case were the splitting field has degree $6$ it can also not correspond to a $3$-cycle with the same type of argument. We see that in the case of a degree $3$ splitting field $A_g, A_g^{op}$ from Lemma \ref{3pLem} satisfy the condition to be representants of $\textbf{k}$-automorphisms if we choose the same matrices as the $A_g$ used to define the affine represenatation ($A_g^g = A_g = A_g^{-1}A_g A_g$). Thus we can take $A = D_A A_g^{i_A}, \, B = D_B (A_g^{op})^{i_B}$ where $0 \leq i_A,i_B \leq 2$ and $D_A,D_B$ are diagonal. We find that there is $\lambda \in \textbf{L}$ such that $\lambda B  \Sigma =  \Sigma A$, but since both $ B  \Sigma , \, \Sigma A$ are affine representants we have $\lambda \in \textbf{k}$, thus by changing $B$ we can assume $\lambda = 1$. Then $D_B (A_g^{op})^{i_B} \Sigma = B  \Sigma =  \Sigma A = det(D_A) \xi^{i_A} D_A^{-1} (A_g^{op})^{i_A} \Sigma$ (in affine coordinates). Thus we find $det(B) = det(D_A)^2 \xi^{2i_A} = det(A)^2 = det(A)^{-1}$ in $\textbf{k}^*/(\textbf{k}^*)^3$. \newline
    Now if $p,p'$ are $6$-points, we can take $q$ the fixpoint of $\alpha$ (componentwise fixpoint) from Lemma \ref{Fixpoints} and observe, that $q' := \chi(q)$ is the componentwise fixpoint of $\beta$. We now take $\textbf{L}$-isomorphisms $\varphi, \varphi^{op}$ of $S,S^{op}$ to $\mathbb{P}^2$ like in Lemma \ref{3pLem} for $q,q'$. We again write $A,B$ for the affine representants of $\alpha, \beta$ and we write $Q$ for the one of $\chi$. We can see, that $Q$ fixes the points $[1:0:0], [0:1:0],[0:0:1]$. since $q' = \chi(q)$. We have seen in Lemma $\ref{BaseP}$, that $Q$ is of degree $5$. Since it fixes $[1:0:0], [0:1:0],[0:0:1]$ but can not have them as base points its first entry has a $x^5$ component but no $y^5,z^5$ components and similar for the second and third entry. Since $[1:0:0], [0:1:0],[0:0:1]$ are fixed by $A,B$ we find $A,B$ are diagonal. We get another $\lambda \in \textbf{L}$ such that $\lambda B Q = Q A$ and can remove it like in the first case. Now we get, that $Q = B^{-1}Q A = B^{-1}Q(a_{11}x, a_{22}y, a_{33}z)$. Due to the $x^5$ term in the first entry we find $b_{11} = a_{11}^5$ and similarly $b_{22} = a_{22}^5, b_{33} = a_{33}^5$. Thus $B = A^5$ and we find, that $det(B) = det(A)^5 = det(A)^{-1}$ modulo $(\textbf{k}^*)^3$.
\end{proof}
\noindent
To work with the elementary relation requires more work and will thus be done in the next $3$ subchapters.

\section{Rewriting elementary relations}
 \noindent Take now an elementary relation, thus we take Sarkisov links $\chi_i: S \dashrightarrow S^{op}, \, \tau_i: S^{op} \dashrightarrow S, \, i=1,2,3$ based at $3$-points such that $\chi_1,\chi_2,\chi_3$ are equivalent and $\tau_1,\tau_2,\tau_3$ are equivalent and \[\tau_3\chi_3\tau_2\chi_2\tau_1\chi_1 = id\] Due to the equivalence we can find $\alpha_i, \delta_i \in Aut_{\textbf{k}}(S), \, \gamma_i, \beta_i \in Aut_{\textbf{k}}(S^{op})$ such that $\tau_i = \alpha_i \tau \beta_i, \, \chi_i = \gamma_i \chi \delta_i$ where $\chi, \tau$ are the links representing the respective class of links. In this section we prove that the choice of links representing the class does not affect the determinant of the word. In Section \ref{Sec5} we will then prove, that the determinant is always trivial in $\textbf{k}^*/(\textbf{k}^*)^3$.

\begin{lemma}\label{Subst}
    Take a elementary relation $\tau_3\chi_3\tau_2\chi_2\tau_1\chi_1 = id$ of a Severi-Brauer surface $S$. Choose $\tau: S^{op} \dashrightarrow S, \, \chi: S \dashrightarrow S^{op}$ such that $\chi_i \sim \chi, \tau_i \sim \tau$. Due to the equivalence we can find $\alpha_i, \delta_i \in Aut_{\textbf{k}}(S), \, \gamma_i, \beta_i \in Aut_{\textbf{k}}(S^{op})$ such that $\tau_i = \alpha_i \tau \beta_i, \, \chi_i = \gamma_i \chi \delta_i$. \\
    Then $\prod_{i=1}^3 det(\alpha_i\delta_i)det(\beta_i\gamma_i)^{-1}$ does not depend on the choice of $\chi, \tau$. \\
    In particular the determinant of the word does not depend on the choice of representators.
\end{lemma}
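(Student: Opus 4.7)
The plan is to exploit the fact that an elementary relation always involves exactly three copies of each link class, so that any correction arising from a change of representative appears as a cube in the target group $\textbf{k}^*/(\textbf{k}^*)^3$ and therefore vanishes.

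First I would dispose of a small ambiguity: the decomposition $\chi_i = \gamma_i \chi \delta_i$ is not unique for fixed $\chi_i$ and $\chi$. If $\gamma_i' \chi \delta_i'$ is another decomposition, then $(\gamma_i^{-1}\gamma_i')\chi = \chi(\delta_i(\delta_i')^{-1})$, so Lemma~\ref{DetTrivRef} yields $\det(\gamma_i^{-1}\gamma_i') = \det(\delta_i(\delta_i')^{-1})^{-1}$, equivalently $\det(\gamma_i')\det(\gamma_i)^{-1} = \det(\delta_i')\det(\delta_i)^{-1}$. A short substitution then shows that each individual factor $\det(\alpha_i\delta_i)\det(\beta_i\gamma_i)^{-1}$ is unchanged, and the same argument applies to the decomposition $\tau_i = \alpha_i \tau \beta_i$. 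Thus the product depends only on $\chi$ and $\tau$ (and on the fixed $\chi_i, \tau_i$).

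The main step is to verify independence with respect to the choice of $\chi, \tau$ inside their equivalence classes. By the equivalence of Sarkisov links, any other representative can be written as $\chi' = \gamma_0 \chi \delta_0$ with $\gamma_0 \in \mathrm{Aut}_{\textbf{k}}(S^{op})$, $\delta_0 \in \mathrm{Aut}_{\textbf{k}}(S)$, and similarly $\tau' = \alpha_0 \tau \beta_0$ with $\alpha_0 \in \mathrm{Aut}_{\textbf{k}}(S)$, $\beta_0 \in \mathrm{Aut}_{\textbf{k}}(S^{op})$. Rewriting gives $\chi_i = (\gamma_i\gamma_0^{-1})\chi'(\delta_0^{-1}\delta_i)$ and $\tau_i = (\alpha_i\alpha_0^{-1})\tau'(\beta_0^{-1}\beta_i)$. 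Since $\det$ is a homomorphism into the abelian group $\textbf{k}^*/(\textbf{k}^*)^3$, the new product differs from the old by a factor of $\det(\alpha_0)^{-3}\det(\delta_0)^{-3}\det(\beta_0)^{3}\det(\gamma_0)^{3}$; each correction appears once for every $i \in \{1,2,3\}$. Modulo $(\textbf{k}^*)^3$ this is trivial, so the product is unchanged.

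The in-particular clause follows at once: since $\det_R$ kills the chosen representatives in $R$ and restricts to $\det$ on $\mathrm{Aut}_{\textbf{k}}(S)$ and to $\det^{-1}$ on $\mathrm{Aut}_{\textbf{k}}(S^{op})$, the determinant of the word $\tau_3\chi_3\tau_2\chi_2\tau_1\chi_1$ is exactly $\prod_{i=1}^3 \det(\alpha_i\delta_i)\det(\beta_i\gamma_i)^{-1}$, so independence of the choice of representators is precisely the statement just proven. I foresee no essential obstacle; the whole argument rests on the numerical coincidence that the length-three elementary relation matches the exponent $3$ in the target group $\textbf{k}^*/(\textbf{k}^*)^3$, which makes the bookkeeping collapse cleanly once Lemma~\ref{DetTrivRef} is in hand.
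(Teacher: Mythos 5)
Your proof is correct and follows essentially the same route as the paper: rewrite $\tau_i,\chi_i$ in terms of the new representatives $\tau',\chi'$ and observe that the correction factor occurs once for each $i=1,2,3$, hence is a cube and vanishes in $\textbf{k}^*/(\textbf{k}^*)^3$. Your preliminary step checking, via Lemma~\ref{DetTrivRef}, that the product is also independent of the (non-unique) choice of decomposition $\chi_i = \gamma_i\chi\delta_i$ is a point the paper's proof leaves implicit, and it is a worthwhile addition.
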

\begin{proof}
    We take $ \tau': S^{op} \dashrightarrow S, \, \chi': S \dashrightarrow S^{op}$ such that $\tau \sim \tau', \, \chi \sim \chi'$. Again, using the equivalence, we find $\alpha, \delta \in Aut_{\textbf{k}}(S), \, \gamma, \beta \in Aut_{\textbf{k}}(S^{op})$ such that $\tau' = \alpha \tau \beta, \, \chi' = \gamma \chi \delta$ and find $\tau_i = \alpha_i \alpha^{-1} \tau' \beta^{-1}\beta_i, \, \chi_i = \gamma_i \gamma^{-1}\chi' \delta^{-1}\delta_i$. The determinant of the word with the $\tau, \chi$ is $\prod_{i=1}^3 det(\alpha_i\delta_i)det(\beta_i\gamma_i)^{-1}$ and the one with $\tau', \chi'$ is $\prod_{i=1}^3 det(\alpha_i \alpha^{-1}\delta_i\delta^{-1})det(\beta_i\beta^{-1}\gamma_i\gamma^{-1})^{-1} = (\prod_{i=1}^3 det(\alpha_i\delta_i)det(\beta_i\gamma_i)^{-1}) (\frac{det(\beta\gamma}{\alpha\beta})^3 = (\prod_{i=1}^3 det(\alpha_i\delta_i)det(\beta_i\gamma_i)^{-1})$ in $\textbf{k}^*/(\textbf{k}^*)^3$.
\end{proof}
\begin{remark} \label{RSubst}
    The lemma holds even if $\chi \sim \tau, \, \chi \neq \tau$, which of course can not happen if both $\chi, \tau$ are representators of their class of $3$-links, but is is still usefull for the following lemma.
\end{remark}

\noindent
Now we want to prove an even stronger statement, where we take $\chi_1$ to represent its class of links and find, that fixing $\chi_1$ the determinant of the word only depends on the base points of $\tau_1$.

\begin{lemma} \label{DepPoint}
     Take two elementary relations:
     \[\tau_3\chi_3\tau_2\chi_2\tau_1\chi = id\]
     \[\tau_3'\chi_3'\tau_2'\chi_2'\tau_1'\chi = id\]
     of a Severi-Brauer surface $S$ where $Ind(\tau_1) = Ind(\tau_1')$. Then the determinant of the words are equal in $\textbf{k}^*/(\textbf{k}^*)^3$. 
\end{lemma}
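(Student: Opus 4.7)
The plan is to reduce to the situation where both hexagons share the same first two links, then appeal to geometric rigidity of the hexagon together with Lemma~\ref{Subst}.

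The first step exploits $Ind(\tau_1) = Ind(\tau_1')$. Taking $\alpha = id_{S^{op}}$ in the equivalence criterion for Sarkisov links gives $\tau_1' = \phi \tau_1$ for the automorphism $\phi := \tau_1'\tau_1^{-1} \in Aut_{\textbf{k}}(S)$. Substituting $\tau_1 = \phi^{-1}\tau_1'$ into the first relation turns it into
\[
\tau_3\chi_3\tau_2(\chi_2\phi^{-1})\tau_1'\chi = id,
\]
again an elementary relation with first two links $\chi, \tau_1'$, where $\tilde{\chi}_2 := \chi_2\phi^{-1}$ is a Sarkisov link equivalent to $\chi_2$. Computing the determinant of this rewritten word with the same representatives $\chi, \tau$ used for the original relation, the only altered coefficients are $\alpha_1 \mapsto \phi\alpha_1$ (coming from $\tau_1' = \phi\alpha_1\tau\beta_1$) and $\delta_2 \mapsto \delta_2\phi^{-1}$ (coming from $\tilde{\chi}_2 = \gamma_2\chi\delta_2\phi^{-1}$); the extra factors $\det(\phi)$ and $\det(\phi)^{-1}$ cancel in $\textbf{k}^*/(\textbf{k}^*)^3$, so this modification preserves the determinant of the word.

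It remains to compare two elementary relations of the form $\ast \cdot \tau_1' \cdot \chi = id$ sharing the initial pair $(\chi, \tau_1')$. The key geometric point is that the rank $3$ fibration $X_3$ at the centre of the hexagon is determined by $\chi$ and $Ind(\tau_1')$: it is obtained by blowing up the base points of $\chi$ in $S$ to produce $X_2(\chi)$, then blowing up the preimage of $Ind(\tau_1')$ on $X_2(\chi)$. Hence both hexagons come from the same $X_3$, and their remaining four links can differ only by automorphisms encoding the choices of identification between the intermediate Mori fibre spaces and the fixed copies of $S, S^{op}$. These automorphism differences form a closed chain around the hexagon; applying Lemma~\ref{Subst} together with a telescoping computation of the same flavour as in the first step shows their contributions cancel pairwise in the determinant product.

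The main technical obstacle lies in that last step: tracking precisely how the pre/post-composition automorphisms at each vertex propagate around the hexagon under the constraint that the total composition equals $id$, and verifying the pairwise cancellation. A cleaner alternative, if the geometric rigidity can be iterated, is to peel off the links one at a time: once hexagon rigidity yields $Ind(\tau_2) = Ind(\tau_2')$ (and successively $Ind(\chi_3) = Ind(\chi_3')$, etc.), the same single-automorphism substitution used for $\tau_1, \tau_1'$ reduces the comparison to the next pair, so the determinant equality for the full hexagon follows by induction from the single case already treated.
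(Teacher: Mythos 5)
Your overall strategy is the same as the paper's: use $Ind(\tau_1)=Ind(\tau_1')$ to absorb the difference between $\tau_1$ and $\tau_1'$ into an automorphism, then propagate the resulting automorphism discrepancies around the hexagon and show they do not change the determinant. Your first reduction is correct and matches what the paper does (there, one simply takes $\tau:=\tau_1$ as the class representative via Remark~\ref{RSubst}, which forces $\alpha_1=\beta_1=\beta_1'=id$ and $\tau_1'=\alpha_1'\tau_1$, i.e.\ your $\phi=\alpha_1'$). However, the heart of the lemma is precisely the step you flag as ``the main technical obstacle'' and do not carry out, so the proposal as written has a genuine gap rather than a complete proof.

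Concretely, the missing mechanism is the following, and the tool for it is Lemma~\ref{DetTrivRef}, not Lemma~\ref{Subst}. Because each link in the hexagon sends the base points of its neighbours to one another, the two automorphisms $\delta_2$ and $\delta_2'\alpha_1'$ both send $\tau_1(p')$ to $p=Ind(\chi)$, so $d_1:=\delta_2'\alpha_1'\delta_2^{-1}$ \emph{fixes} $p$. This is what lets you conjugate $d_1$ through $\chi$: $d_1':=\chi d_1\chi^{-1}$ is again an automorphism, and Lemma~\ref{DetTrivRef} gives $det(d_1')det(d_1)=1$. Combined with the opposite sign conventions ($det$ on $Aut_{\textbf{k}}(S)$, $det^{-1}$ on $Aut_{\textbf{k}}(S^{op})$), each such step contributes trivially to the determinant of the word; iterating produces $d_2,\dots,d_5$ with $\beta_2'\gamma_2'd_1'=d_2\beta_2\gamma_2$, etc., and the chain closes because both words equal $id$, forcing $d_5=id$. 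Your appeal to Lemma~\ref{Subst} cannot substitute for this: that lemma only controls the choice of class representatives, whereas here the corrections alternate between $S$ and $S^{op}$ and must be pushed \emph{through} links, which is exactly the content of Lemma~\ref{DetTrivRef}. Likewise, your alternative inductive ``peeling'' sketch glosses over the fact that $Ind(\tau_2)$ and $Ind(\tau_2')$ need not literally coincide; they agree only after applying the accumulated correction automorphism, so the correction must be carried along rather than discarded at each stage. Your geometric observation that both hexagons arise from the same central rank~$3$ fibration is correct and is implicitly what justifies the base-point matching above, but by itself it does not produce the determinant identity.
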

\begin{proof}
    Using Lemma \ref{Subst} we can choose $\chi, \tau := \tau_1$ to represent their classes of links (even if $\chi, \tau$ are equivalent) and find $\alpha_i, \delta_i \in Aut_{\textbf{k}}(S), \, \gamma_i, \beta_i \in Aut_{\textbf{k}}(S^{op}) $ such that $\tau_i = \alpha_i \tau \beta_i, \, \chi_i = \gamma_i \chi \delta_i$ and $\alpha_i', \delta_i' \in Aut_{\textbf{k}}(S), \, \gamma_i', \beta_i' \in Aut_{\textbf{k}}(S^{op})$ such that $\tau_i' = \alpha_i' \tau \beta_i', \, \chi_i' = \gamma_i' \chi \delta_i'$. We take $p,p',q,q'$ the base points of $\chi, \chi^{-1},\tau,\tau^{-1}$. Since $Ind(\tau_1) = Ind(\tau_1')$ and by our choice of representors we have, that $\alpha_1 = \delta_1 =  \delta_1' =id, \beta_1=\gamma_1=\beta_1'= \gamma_1' = id$. Due to the structure of the elementary relation we can find, that both $\delta_2, \delta_2'\alpha_1'$ send $\tau_1(p')$ to $p$. Thus there is $d_1 \in Aut_{\textbf{k}}(S)$ that fixes $p$ such that $d_1\delta_2 = \delta_2'\alpha_1'$. We can then also find with Lemma \ref{DetTrivRef} a $d_1' \in Aut_{\textbf{k}}(S^{op})$ such that $d_1' \chi = \chi d_1$ and $det(d_1')det(d_1) = 1$. We can thus rewrite the second word to: 
    \[\tau_3'\chi_3'\tau_2'\chi_2'\tau_1'\chi = \tau_3'\chi_3'\tau_2'\gamma_2'\chi\delta_2'\alpha_1'\tau_1\chi\] \[= \tau_3'\chi_3'\tau_2'\gamma_2'\chi d_1\delta_2\tau_1\chi = \tau_3'\chi_3'\tau_2'\gamma_2'd_1'\chi \delta_2\tau_1\chi\]
    Thus we can repeat this process and use this to find similar $d_i,d_i', i = 2, \dots ,5$ such that $\beta'_2\gamma'_2 d'_1 = d_2 \beta_2 \gamma_2 , \, \delta_3'\alpha_{2}'d'_{2} = d_{3} \delta_3\alpha_2, \, \beta_3'\gamma_3' d'_{3} = d_{4} \beta_3 \gamma_3, \, \alpha_3'd'_{4} = d_5 \alpha_3$ and \[id = \tau_3'\chi_3'\tau_2'\chi_2'\tau_1'\chi = d_5\tau_3\chi_3\tau_2\chi_2\tau_1\chi = d_5\]
    Thus since $det(d_i)det(d_i')=1$ we can calculate, that the determinants are the same.
\end{proof}
\noindent We notice that $id = \tau_3\chi_3\tau_2\chi_2\tau_1\chi_1 = \alpha^{-1}\tau_3\chi_3\tau_2\chi_2\tau_1\chi_1 \alpha, \, \alpha \in Aut_{\textbf{k}}(S)$. Thus since given the base points of $\chi_1^{-1}$, $\chi_1$ is unique up to right multiplication with an automorphism we can go even further and find, that the determinant of the word $\tau_3 \chi_3 \tau_2\chi_2 \tau_1 \chi_1$ only depends up to $(\textbf{k}^*)^3$ on the base points of $\chi_1^{-1}$ and $\tau_1$ which are both in $S^{op}$.

\section{The determinant of an elementary relation} \label{Sec5}
\noindent Now if we have fixed base points of $\tau_1$ and fixed $\chi_1$ it is enough to choose one such possible relation and prove that the determinant of the corresponding word is trivial. We also choose $\chi:= \chi_1$ to represent its class of links. By Lemma \ref{DepPoint} this will imply the result for all other relations with the same base points of $\tau_1$ and the same $\chi_1$. Thus we will now find such a relation and calculate the determinants. The next lemmas will tell us how to find given $\tau$ and $\alpha_1 \in Aut_{\textbf{k}}(S)$ suitable affine representants $A_2, \dots , A_6$ of automorphisms $\alpha_2, \dots , \alpha_6$ such that $\alpha_6 \tau \alpha_5 \chi \dots \tau \alpha_1 \chi = id$ is an elementary relation. We then take $\tau_i := \alpha_{2i} \tau \alpha_{2i-1}, \, \chi_i:= \chi$ and this will be our example. We start with the case where for the splitting field $\textbf{L}$ of the base point $p$ of $\chi$ we have $[\textbf{L}:\textbf{k}] = 3$. We choose the conventional isomorphism $\varphi$ for $p$ and $\varphi^{op}$ for $p'$ the base point of $\chi^{-1}$. We observe, that $\Sigma := (yz,xz,xy)$ is an affine representant of a $3$-link equivalent to $\chi$. The affine representant of $\chi$ only differs from $\Sigma$ by a diagonal matrix, which is itself an affine representant of an automorphism, since $\Sigma$ is one. Thus using Lemma \ref{Subst} we can assume, that $\Sigma$ is an affine representant of $\chi$. 
\begin{convention}
    For this subchapter we take $\chi:S \dashrightarrow S^{op}$ a $3$-link with base point $3$ whose splitting field $\textbf{L}$ satisfies $[\textbf{L}:\textbf{k}] = 3$. We also take the $\varphi$ for $p$ and the $\varphi^{op}$ for the base point of $\chi^{-1}$. We recall, that we have $\xi$ such that $A_g = \begin{pmatrix}
        0 & 0 & \xi \\
        1 & 0 & 0 \\
        0 & 1 & 0
    \end{pmatrix}$ and $A_g^{op} = \begin{pmatrix}
        0 & 0 & \xi^{-1} \\
        1 & 0 & 0 \\
        0 & 1 & 0
    \end{pmatrix}$ where $A_g = \varphi \circ (\varphi^{-1})^g, \, A_g^{op} = \varphi^{op} \circ ((\varphi^{op})^{-1})^g$.
\end{convention}
\subsection{The relation with only one class of points}
We start with the case where $\tau = \chi$.
\begin{lemma}\label{ElemRel}
    Let $\Sigma_i, \, i=1,2,3$ be affine representants of equivalent $3$-links $\tau_i$. If the projectivization $\sigma_i$ of $\Sigma_i$ maps $\{[1:0:0],[0:1:0],[0:0:1] \}$ to itself and $\sigma$ maps the base points of $\sigma_{i}^{-1}$ to the base points of $\sigma_{i+1}, \, i=1,2$ and $\tau_3 \chi \tau_2 \chi \tau_1 \chi = id$, then the last expression is an elementary relation. 
\end{lemma}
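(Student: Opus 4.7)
The plan is to realize the word $\tau_3 \chi \tau_2 \chi \tau_1 \chi = \mathrm{id}$ as the boundary of the $2$-piece of a rank $3$ fibration $Y \to \mathrm{Spec}(\textbf{k})$; by Theorem~\ref{ElementaryRelation} this is precisely what it means to be an elementary relation. The candidate $Y$ is obtained by simultaneously blowing up $S$ at two $3$-points: the base point $p$ of $\chi$, and the pre-image $\chi^{-1}(q) \subset S$ of the base point $q \subset S^{op}$ of $\tau_1$. These are distinct $3$-points by Lemma~\ref{ImageOf3P}, and by Lemma~\ref{BaseP} the affine representants $\Sigma$ and $\Sigma_1$ ensure that over the splitting field these two $3$-points pull back to six genuinely distinct points of $\mathbb{P}^2_{\textbf{L}}$.

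First I would check that $Y \to \mathrm{Spec}(\textbf{k})$ is a rank $3$ fibration in the sense of Definition~\ref{RankRFibration}: the relative Picard rank equals $1+2=3$, and $-K_Y$ is relatively ample because, over $\bar{\textbf{k}}$, the surface $Y_{\bar{\textbf{k}}}$ is the blow-up of $\mathbb{P}^2_{\bar{\textbf{k}}}$ at six points (three per Galois orbit) in general position, hence a degree-$3$ del Pezzo surface. Next I would analyse its $2$-piece (Definition~\ref{RPiece}): by the $2$-rays game, $Y$ dominates exactly six Mori fibre spaces, which by Lemma~\ref{Links36} are alternately isomorphic to $S$ and $S^{op}$; these are the vertices of a hexagonal $2$-piece, whose six edges give Sarkisov links between consecutive vertices.

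I would then match the boundary of the hexagon against the given word. The hypothesis that each $\sigma_i$ preserves the coordinate $3$-point (so all $\tau_i$ are equivalent links based at the same class of $3$-points) together with the compatibility condition that $\sigma$ sends the base points of $\sigma_i^{-1}$ to those of $\sigma_{i+1}$ is exactly the combinatorial data saying that three edges of the hexagon are copies of $\chi$ and the other three are copies of $\tau$, interleaved so that adjacent edges share a common vertex. Once the cyclic ordering is identified, the boundary word becomes $\tau_3 \chi \tau_2 \chi \tau_1 \chi$ up to trivial relations (pre/post-composition with automorphisms that do not change the equivalence classes). By Theorem~\ref{ElementaryRelation} this boundary composition is an automorphism, and by the hypothesis of the lemma it is the identity, so $Y$ exhibits the word as an elementary relation in the sense of Lemma~\ref{ElRel}.

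The main obstacle is showing that the six blown-up points are in general position (no three collinear, no six on a conic), which is needed for $Y_{\bar{\textbf{k}}}$ to be a del Pezzo of degree $3$ and for its $2$-piece to be the expected hexagon. Any special configuration would either break the ampleness of $-K_Y$ or alter the $2$-rays game, producing a different boundary word that could not match the hexagonal relation. Ruling this out uses the hypothesis that the $\Sigma_i$ genuinely represent distinct $3$-links together with the fact that the closed word of length $6$ returns to the identity, which forces the hexagonal structure rather than a degenerate or longer/shorter cyclic configuration.
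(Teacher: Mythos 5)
Your proposal follows essentially the same route as the paper: both exhibit the word as the boundary of the hexagonal $2$-piece of Figure~\ref{fig: rel} by producing the central degree-$3$ del Pezzo surface of Picard rank $3$ (your $Y$, the paper's $X_3$) from the blow-up of $S$ at the base point of $\chi$ together with the $\chi$-preimage of the base point of $\tau_1$, and both use the hypothesis that each link carries the base points of its neighbours to each other to see that all six resolutions glue to this single dominating surface. The paper is just as terse about the general-position issue you flag (it simply asserts that blowing up the two $3$-points yields a degree-$3$ del Pezzo), so there is no substantive difference between the two arguments.
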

\begin{proof}
    We compare this to the hexagon in Figure \ref{fig: rel}. 
    The $X_6, X_6'$ exist since the $3$-links are a composition of blow-ups of the $3$-points, which are their base points. Among the six del Pezzo surfaces of degree $6$ we only obtain two $X_6, X_6'$, as respectively three links are equivalent, which implies there are automorphism between their base points. Now we notice that for one link the preimage of the blow-up of the base points of the previous and also the preimage of the base points of the next link are equal by the action of the links on those points. Thus we can blow-up $X_6$ or respectively $X_6'$ in those points and get to a degree $3$ del Pezzo. Then the blow-up morphisms always blow-up the same points up to reordering and we can thus choose the blow-ups in a way such that they always start at the same del Pezzo $X_3$.
\end{proof}
\begin{lemma} \label{FixMatr}
  Take $\rho:S^{op} \dashrightarrow S$ a $3$-link. Let $p,p'$ be the base points of $\chi, \chi^{-1}$. Let $A$ be a affine representant of a $\alpha \in Aut_{\textbf{k}}(S^{op})$ and $P$ an affine representant of $\rho$, whose existence follows from Proposition \ref{AffineRep}. \\ We then find, that $A^{\rho}D_{\xi}$ is a affine representant of an automorphism of $S$, where $A^{\rho}$ is the matrix which has columns given by $P$ applied to the columns of $A$ and $D_{\xi}$ is the diagonal matrix with diagonal $1, \xi, \xi^2$. Its inverse $A' := D_{\xi}^{-1} (A^{\rho})^{-1}$ is an affine representant of an $\alpha' \in Aut_{\textbf{k}}(S)$, such that $\alpha' \rho \alpha$ sends $p'$ to $p$.  
\end{lemma}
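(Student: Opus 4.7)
The plan is to verify the Galois-equivariance condition of Lemma \ref{matrixL} for $M := A^\rho D_\xi$, namely $M^g = A_g^{-1} M A_g$, from which Lemma \ref{matrixL} will give that $M$ is the affine representant of some automorphism $\alpha'_0 \in Aut_{\textbf{k}}(S)$; the geometric claim about $\alpha' = (\alpha'_0)^{-1}$ then falls out of inspecting the columns of $M$.

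First I would record the two Galois-equivariances in play. By the analog of Proposition \ref{AffineRep} for $\rho: S^{op} \dashrightarrow S$ (obtained by swapping the roles of $A_g$ and $A_g^{op}$, which inverts $\xi$ because $A_g^3 = \xi I$ and $(A_g^{op})^3 = \xi^{-1} I$), the affine representant $P$ of the degree-$2$ map $\rho$ satisfies $P^g = \xi\, A_g^{-1} \circ P \circ A_g^{op}$ as homogeneous polynomial maps $\mathbb{A}^3 \to \mathbb{A}^3$. On the other hand, since $A$ is an affine representant of $\alpha \in Aut_{\textbf{k}}(S^{op})$, the opposite analog of Lemma \ref{matrixL} gives $A^g = (A_g^{op})^{-1} A A_g^{op}$, i.e.\ $A\, A_g^{op} = A_g^{op}\, A^g$. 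Writing $a_1,a_2,a_3$ for the columns of $A$ and using that the columns of $A_g^{op}$ are $(e_2, e_3, \xi^{-1} e_1)$, this matrix identity rewrites columnwise as
\[
A_g^{op}\, g(a_1) = a_2,\qquad A_g^{op}\, g(a_2) = a_3,\qquad A_g^{op}\, g(a_3) = \xi^{-1} a_1.
\]

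Next I would compute $(A^\rho)^g$ column by column. Its $j$-th column is $P^g(g(a_j)) = \xi\, A_g^{-1} P\bigl(A_g^{op}\, g(a_j)\bigr)$. Substituting the three identities above, and using that $P$ is homogeneous of degree $2$ (so $P(\xi^{-1} a_1) = \xi^{-2} P(a_1)$), and writing $c_j := P(a_j)$, one obtains $(A^\rho)^g = A_g^{-1}(\xi c_2,\, \xi c_3,\, \xi^{-1} c_1)$. Since $\xi \in \textbf{k}$, the matrix $D_\xi$ is $g$-invariant, so right-multiplying by $D_\xi$ gives $(A^\rho D_\xi)^g = A_g^{-1}(\xi c_2,\, \xi^2 c_3,\, \xi c_1)$. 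A short direct check, using that the columns of $A_g$ are $(e_2, e_3, \xi e_1)$, shows
\[
A_g^{-1}(A^\rho D_\xi)\, A_g \;=\; A_g^{-1}(\xi c_2,\, \xi^2 c_3,\, \xi c_1),
\]
matching $(A^\rho D_\xi)^g$. Thus $M = A^\rho D_\xi$ satisfies $M^g = A_g^{-1} M A_g$, so by Lemma \ref{matrixL} it is the affine representant of some $\alpha'_0 \in Aut_{\textbf{k}}(S)$ (provided $M$ is invertible, which is automatic under the implicit general-position hypothesis that $\alpha(p')$ does not meet the base locus of $\rho$); its inverse $A' = D_\xi^{-1}(A^\rho)^{-1}$ satisfies the same relation and is the affine representant of $\alpha' := (\alpha'_0)^{-1}$.

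Finally I would read off the geometric conclusion. The $j$-th column of $A^\rho D_\xi$ is a nonzero scalar multiple of $c_j = P(a_j)$, and projectively $[a_j] = \varphi^{op}(\alpha(p'_j))$, hence $[c_j] = \varphi(\rho(\alpha(p'_j)))$. Therefore the projective map attached to $A^\rho D_\xi$ sends $[e_j] = \varphi(p_j)$ to $\varphi(\rho(\alpha(p'_j)))$, i.e.\ $\alpha'_0(p_j) = \rho(\alpha(p'_j))$. Inverting gives $\alpha'(\rho(\alpha(p'_j))) = p_j$ for $j=1,2,3$, so $\alpha'\, \rho\, \alpha$ sends $p'$ to $p$ as $3$-points. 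The main obstacle is the careful bookkeeping of the $\xi$-exponents: the diagonal $D_\xi = \mathrm{diag}(1,\xi,\xi^2)$ is tailored precisely so that the column rescaling $(\xi,\xi^2,\xi)$ produced by conjugation by $A_g$ matches the one coming from the $P^g$-relation combined with the columnwise identities for $A$; once that matching is in place, the rest of the argument is essentially formal.
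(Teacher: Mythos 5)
Your proof is correct and follows essentially the same route as the paper's: both verify the equivariance condition $M^g = A_g^{-1} M A_g$ for $M = A^{\rho}D_{\xi}$ by a columnwise computation combining the relation $P^g = \xi\, A_g^{-1} \circ P \circ A_g^{op}$ with the column identities $A_g^{op}g(a_1)=a_2$, $A_g^{op}g(a_2)=a_3$, $A_g^{op}g(a_3)=\xi^{-1}a_1$, and both then read off the statement about base points by tracking the coordinate points through $\rho \circ \alpha$ and $\alpha'$. Your explicit remark that invertibility of $A^{\rho}$ requires the columns of $A$ to avoid the base locus and contracted lines of $\rho$ is a reasonable caveat that the paper leaves implicit.
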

\begin{proof}
      Let $a_i, \, i=1,2,3$ be the columns of $A$ and $b_i := P(a_i)$. We then get, since \[A_g^{op}(g(a_1) \mid g(a_2) \mid g(a_3)) = A_g^{op} A^g = A A_g^{op} = (a_2 \mid a_3 \mid \xi^{-1} a_1)\] that $a_2 = A_g^{op}(g(a_1)), \, a_3 = A_g^{op} (g(a_2)) = (A_g^{op})^2(g^2(a_1)), \, a_1 = \xi A_g^{op}(g(a_3)) = \xi (A_g^{op})^3 (a_1) = a_1$. Since we have $A_g  P^g (A_g^{op})^{-1} = \xi P$ we get, that $\xi b_2 = A_g(g(b_1)), \, \xi b_3 = A_g(g(b_2)) = \xi^{-1} A_g(g(\xi b_2)),  b_1 = \xi A_g(g(b_3)) = \xi^{-1} A_g^3(b_1) = b_1$. Thus if we look at $A^{\rho} D_{\xi} = (b_1 \mid \xi b_2 \mid \xi^2 b_3)$ we get that $A^{\rho} D_{\xi}$ has the fitting representation and thus corresponds to an element in $Aut_{\textbf{k}}(S)$. We know $\rho A$ sends $\{ [1:0:0], [0:1:0],[0:0:1]\}$ to the classes of $P(a_i)$ which are send back to $\{ [1:0:0], [0:1:0],[0:0:1]\}$ by $A'$. All other matrices with this property send the classes of $P(a_i)$ to $\{ [1:0:0], [0:1:0],[0:0:1]\}$. Since using $\varphi, \varphi^{op}$ with $p,p'$, $\{ [1:0:0], [0:1:0],[0:0:1]\}$ corresponds to $p,p'$.      
\end{proof}
\begin{remark}
    In Lemma \ref{FixMatr}, each other automorphism $\alpha''$ which is chosen such that $\alpha' \rho \alpha$ sends $p$ to $p'$ can be found by multiplying an automorphism whose affine representant is a diagonal to $\alpha'$.
\end{remark}
\noindent
 Going forward we can apply the same argument with $A_3, \dots , A_6$ and find all possible relations with this method, by multiplying diagonals to $D_{\xi}^{-1}((A_i)^{\sigma})^{-1}$ (except for $A_6$ where one cannot choose the diagonal freely anymore). Using the results of the last subchapter we will and can choose the examples where the diagonals are $I_3$ to make the calculation simpler. We can now look at an example where we can see, how this calculation works, and that in this case the product of the determinants becomes $1$.
\begin{example}
    We take an example from \cite{BSY}, where $\textbf{k} := \mathbb{C}(u,v)$ and $\textbf{L} := \textbf{k}(\sqrt[3]{u})$. One can find, that this is a Galois extention of degree $3$ and that its norm map is non surjective, not having $\xi := v^{-1}$ in its image. Thus using \cite{BSY} Lemma 2.3.3 there is a Severi-Brauer surface $S_{\xi}$ and an isomorphism $\varphi: S_{\xi} \rightarrow \mathbb{P}_{\textbf{L}}^2 $ such that for $g \in Gal(\textbf{L}/ \textbf{k})$ a generator, we have $\varphi \circ g \circ \varphi^{-1} = A_g \circ g$, where $A_g$ is as usual. Now we want to find an example for the relation and check wether the determinant behaves, like we want it to. For this we first try to find an $\alpha_1 \in Aut_{\textbf{k}}(S_{\xi})$. For this we need an invertible matrix $A_1$ such that $A_gA_1^g = A_1A_g$. For simplicity we choose $A_1$ to be over $\textbf{k}$, thus we need $A_gA_1 = A_1A_g$. To do this we start with a column (or a first point) and use this equations to find the other column (its "Galois orbit"). One example for this would be 
    $A_1 := \begin{pmatrix}
        1 & 3v & 2v \\
        2 & 1 & 3v \\
        3 & 2 & 1 \\
    \end{pmatrix}$.
    Now we want to find fitting $A_i, \, i=2, \dots , 6$ such that $A_6 \sigma A_5 \sigma \dots A_1 \sigma = id$. But we have more information in our relation. we know, that $A_i \sigma A_{i-1}$ always sends $\{ [1:0:0], [0:1:0], [0:0:1] \}$ to itself. We assume, that there is no permutation within the set and thus we can find for $a_{i}$ being the columns of $A_1$, that we can choose $(\sigma(a_{1})  \sigma(a_{2})  \sigma(a_{3})  )$ as $A_2$ (where we apply an affine version of $\sigma$ to the $a_i$). We call this matrix $A_1^{\sigma}$. We can now check since $\sigma A_g = \xi A_g^{op} \circ \sigma$, that if we instead choose $A_2 := D_{\xi^{-1}} A_1^{\sigma}$ ,where $D_{\xi^{-1}}$ is the diagonal matrix with $1,\xi^{-1},\xi^{-2}$ on its diagonal, $A_2$ satisfies our restriction $A_g^{op} A_2 = A_2 A_g^{op}$. We can thus carry on with this principle and multiply with some fitting elements in $\textbf{k}$ we find the matrices:
     \[A_2 := \begin{pmatrix}
        36v^2 - 6 v & -3v & -18v + 4 \\
        -18 v^2 + 4v & 36v^2 - 6 v & -3v \\
        -3v^2 & -18 v^2 + 4v & 36v^2 - 6 v \\
    \end{pmatrix} \]
    \[A_3 := \begin{pmatrix}
        -9 v^2 + 2v & 6v^2 - v & 54v^3 - 21 v^2 + 2v \\
        54v^2 - 21 v + 2 & -9 v^2 + 2v & 6v^2 - v \\
        6v - 1 & 54v^2 - 21 v + 2 & -9 v^2 + 2v \\
    \end{pmatrix}\]
        \newline
     \[A_4 := \begin{pmatrix}
        1 & 2 & 3 \\
        3v & 1 & 2 \\
        2t-2 & 3v & 1 \\
    \end{pmatrix}, \, 
    A_3 := \begin{pmatrix}
        36v^2 - 6 v & -18 v^2 + 4v & -3t-2^2 \\
        -3t-2 & 36v^2 - 6 v & -18 v^2 + 4v \\
        -18 v + 4 & -3v & 36v^2 - 6 v \\
    \end{pmatrix}\] 
    \[A_6 := \begin{pmatrix}
        -9 v^2 + 2v & 54v^2 - 21 v + 2 &  6v - 1 \\
        6v^2 - v & -9 v^2 + 2v & 54v^2 - 21 v + 2 \\
        54v^3 - 21 v^2 + 2v & 6v^2 - v & -9 v^2 + 2v \\
    \end{pmatrix}\]
    and get, that $det(A_1A_3A_5)det(A_2A_4A_6)^{-1} = 1$. Now we still do not now what $A_6 \sigma A_5 \sigma \dots A_1 \sigma$ is, we just know its a diagonal matrix. But one can calculate this with a computer program and find, that it is the identity. Thus we have seen an example, where the two points have the same class and where our relation of the determinants becomes trivial. 
    
\end{example}
\noindent 
The next lemma calculates the matrices $A_2, \dots A_6$ dependant on $A_1$ similar to how it is done in the example. We recall the notion $A^P$ from Lemma \ref{FixMatr} and in this case specifically $A^{\Sigma}$ the matrix where you apply $\Sigma$ to the columns of $A$. For the case where the two classes of points in our relation are the same, this is almost enough. In the other case, we have some problems with our representation and knowing whether something is a $\textbf{k}$-automorphism. Those problems will be fixed in the next subchapter.
\begin{lemma} \label{LargeCalc}
    Let $\textbf{C}$ be a field and $A \in \mathrm{Gl}_3(C)$ ( where the classes of its columns in $\mathbb{P}^2_C$ together with the coordinate points are in general positions). Then if we define $M_1(A):= A, \, M_i(A) := (M_{i-1}(A)^{\Sigma})^{-1}, \, i=2, \dots , 6$, where $\Sigma = (yz,xz,xy)$, we get, that \[d(A):= det(A^{-1}M_3(A)^{-1}M_5(A)^{-1})det(M_2(A)M_4(A)M_6(A))\] \[= \frac{P(A)^{14}}{P(A^{-1})^7 det((A^{\sigma}))^{42}det(A)^{42}}\] where $P$ is the product of all matrix entries. And as birational maps over $\mathbb{P}^2$ we get, that $M_6(A) \sigma \dots M_2(A) \sigma A \sigma = D(A)$ where $D(A)$ is a diagonal matrix such that:
    \newline
    \[D(A)_{ii} = det(A)^6\frac{\prod_{k=1}^3(A^{-1})_{ik}^2}{\prod_{l=1}^3a_{li}}\]
    Thus $det(D(A)) =\frac{P(A^{-1})^2det(A)^{18}}{P(A)}$.
\end{lemma}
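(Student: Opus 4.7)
The plan is to treat this as a pure matrix identity over the field $\textbf{C}$ and push through the computation in closed form, since no Severi--Brauer structure is actually used in the statement. The central observation I would exploit throughout is the factorisation
\[
A^{\Sigma} \;=\; H(A)\,\mathrm{diag}(P_1,P_2,P_3),
\]
where $H(A)$ is the entry-wise (Hadamard) inverse of $A$ and $P_j := a_{1j}a_{2j}a_{3j}$ is the $j$-th column product; this is immediate from $\Sigma\bigl((x,y,z)^{T}\bigr) = xyz\,(1/x,1/y,1/z)^{T}$. The general-position hypothesis ensures that no entry of any $M_i$ vanishes, so all Hadamard inverses and matrix inverses in the recursion exist. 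Consequently $M_2 = \mathrm{diag}(P_1,P_2,P_3)^{-1}\,H(A)^{-1}$, whose entries are recovered from the usual cofactor formula, and iterating the same factorisation on $M_2$ yields $M_3$, and so on, producing closed-form expressions for $M_3,\dots,M_6$ in terms of the entries of $A$, those of $A^{-1}$, and various column products.

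For the diagonality of $M_6\sigma\cdots M_1\sigma$, the key remark is that the recursion $M_{i+1}=(M_i^{\Sigma})^{-1}$ is exactly the choice from Lemma~\ref{FixMatr} that makes each segment $M_{i+1}\sigma M_i$ preserve the coordinate triple $\{[1{:}0{:}0],[0{:}1{:}0],[0{:}0{:}1]\}$. Hence the six-fold composition permutes that triple, and a direct check using the general-position hypothesis on the columns of $A$ rules out non-trivial permutations; the composition is therefore diagonal. Evaluating it at $e_j$ via the closed forms worked out above and simplifying with $\mathrm{cof}(A)_{kj}=\det(A)(A^{-1})_{jk}$ should collapse to
\[
D(A)_{jj} \;=\; \det(A)^{6}\,\frac{\prod_{k=1}^{3}(A^{-1})_{jk}^{\,2}}{\prod_{l=1}^{3}a_{lj}},
\]
and multiplying over $j=1,2,3$ then gives $\det(D(A)) = \det(A)^{18}\,P(A^{-1})^{2}/P(A)$.

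For $d(A)$ I would apply $\det$ to the factorisation at each step, obtaining $\det(M_{i+1})=\bigl[\det(H(M_i))\,P_1(M_i)P_2(M_i)P_3(M_i)\bigr]^{-1}$, and rewrite $\det(H(M_i))$ in terms of $\det(M_i^{\Sigma})$ and the column products of $M_i$. Across six iterations the resulting product telescopes, and after some disciplined bookkeeping the remainder should collapse to $P(A)^{14}\bigl/\bigl(P(A^{-1})^{7}\det(A^{\Sigma})^{42}\det(A)^{42}\bigr)$.

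The main obstacle is not conceptual but combinatorial: one must track Hadamard inverses, column products and adjugates simultaneously across six iterations and verify that they really recombine into the clean monomials on the right-hand side. Two simplifications keep things manageable: for $D(A)_{jj}$ one only needs the way each $M_i$ transforms the three coordinate directions, not its full matrix, and for $d(A)$ one only needs the scalars $\det(M_i)$. A symbolic sanity check on a generic $3\times 3$ matrix with indeterminate entries can confirm both identities at the end.
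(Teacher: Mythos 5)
Your overall strategy coincides with the paper's: both proofs are direct closed-form computations of the $M_i(A)$, their determinants and entry-products, exploiting the self-similarity $M_i(A)=M_{i-1}(M_2(A))$ to telescope. Your factorisation $A^{\Sigma}=H(A)\,\mathrm{diag}(P_1,P_2,P_3)$ is a cleaner way to organise what the paper writes out entrywise (e.g.\ $M_2(A)_{ij}=(\prod_{k\neq i}a_{jk})(A^{-1})_{ij}\lambda_2(A)$), and the determinant bookkeeping you sketch for $d(A)$ and $\det(D(A))$ is exactly what the paper carries out.

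The genuine gap is in your argument for the diagonality of $M_6(A)\sigma\cdots M_2(A)\sigma A\sigma$. Knowing that each segment $M_{i+1}\sigma M_i$ fixes the coordinate points, so that the composition ``permutes the coordinate triple,'' does not yield diagonality: a birational map of any degree can fix three points, and in fact $[1{:}0{:}0],[0{:}1{:}0],[0{:}0{:}1]$ are base points of the right-most $\sigma$, so one cannot even evaluate the word at them before knowing that the degree has collapsed. The essential missing input is that the composition is \emph{linear}. The paper obtains this by showing that the five-$\sigma$ word $M_6(A)\sigma\cdots\sigma M_1(A)$ has characteristic $2;1^3$ with the coordinate points as base points of both it and its inverse, so that the final $\sigma$ drops the degree to $2\cdot 2-3=1$; only after that does ``fixes the three coordinate points'' imply ``diagonal.'' Your fallback of composing the closed forms symbolically and cancelling common factors would of course also establish linearity, but then the permutation argument does no work and the whole burden falls on a computation you have not actually set up; as written, neither route is complete. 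The two determinant identities are unaffected by this issue.
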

\begin{proof}
    To calculate the determinant we need to calculate $det(M_i(A)), i=2, \dots , 6$. We calculate directly:
    \[det(M_3(A))=-(P(A)det(A)det(M_2(A))^4)^{-1}\]
    With this we can calculate $det(M_i(A)), i \geq 3$ by using that $M_i(A) = M_{i-1}(M_2(A))$ and replacing $det(M_{i-1}(\cdot))$ with the previous equality
    \[det(M_4(A))=-P(M_2(A))^{-1}P(A)^4det(A)^4det(M_2(A))^{15}\]
    \[det(M_5(A))=P(M_3(A))^{-1}P(M_2(A))^4P(A)^{-15}det(A)^{-15}det(M_2(A))^{-56}\]
    \[det(M_6(A))=P(M_4(A))^{-1}P(M_3(A))^4P(M_2(A))^{-15}P(A)^{56}det(A)^{56}det(M_2(A))^{209}\]
    Thus we need to find $P(M_i(A)), i=2,3,4$:
    \[P(M_2(A)) = -P(A^{-1})P(A)^2det(A)^9det(M_2(A))^9\]
    Using that $P(M_2(A)^{-1}) = P(A)^2$ we find using again that $M_i(A) = M_{i-1}(M_2(A))$:
    \[P(M_3(A)) = P(A)^{-3}P(A^{-1})^2det(A)^9det(M_2(A))^{-9}\]
    \[P(M_4(A))=P(A^{-1})^{-3}P(A)^7det(A)^{-18}det(M_2(A))^{18}\]
    Putting this together we get:
    \[det(A^{-1}M_3(A)^{-1}M_5(A)^{-1})det(M_2(A)M_4(A)M_6(A)) = \frac{P(A)^{14}}{P(A^{-1})^7 det((A^{\sigma}))^{42}det(A)^{42}}\]
    The fact that $D(A)$ is an diagonal automorphism, is due to our choice of $M_2(A), \dots , M_6(A)$, which makes sure that the degree is $1$ and $[1:0:0], [0:1:0],[0:0:1]$ are fixed. This follows from the fact that $M_6(A) \sigma \dots \sigma M_1(A)$ is of characteristic $2;1^3$ and it and its inverse have $\{ [1:0:0], [0:1:0],[0:0:1]\}$ as their base points. We then calculate, that that the coordinate lines are contracted onto the base points not on them, which implies that $D(A)$ is diagonal. We also know with a similar argument as above just using the inverse relation, that this diagonal must map the classes of the columns of $(A^{-1})^{\sigma}$ to the classes of the columns of $M_6(A)$. Thus we now calculate the $M_i(A)$ and confirm the announced matrix $D(A)$. We observe, that for $\lambda_i(A) \in \textbf{k}^*, \, i = 2, \dots , 6$:
    \[M_2(A)_{ij} = (\prod_{k \neq i} a_{jk}) (A^{-1})_{ij} \lambda_2(A)\]
    \[\lambda_2(A) = \frac{det(A)}{det(A)^{\Sigma}}\]
    Using $M_i(A) = M_{i-1}(M_2(A))$ and dividing by $P(A), P(A^{-1})$ if needed, we get:
    \[M_3(A)_{ij} = \frac{\prod_{k \neq i}(A^{-1})_{jk}}{\prod_{k = 1}^3a_{ik}} \lambda_3(A)\]
    \[\lambda_3(A) = \lambda_2(M_2(A)) \lambda_2(A)^2P(A)\]
    \[M_4(A)_{ij} = \frac{1}{(\prod_{k \neq i}a_{jk})(\prod_{k=1}^3 (A^{-1})_{ik})} \lambda_4(A)\]
    \[\lambda_4(A) = \frac{\lambda_3(M_2(A))}{\lambda_2(A)}\]
    \[M_5(A)_{ij} = \frac{\prod_{k=1}^3a_{ik}}{\prod_{k \neq i}(A^{-1})_{jk} \prod_{k \neq i} \prod_{l \neq j} a_{kl}} \lambda_5(A)\]
    \[\lambda_5(A) = \frac{\lambda_4(M_2(A))}{P(A) \lambda_2(A)^2}\]
    \[M_6(A)_{ij} = \frac{(\prod_{k=1}^3(A^{-1})_{ik}) (\prod_{k \neq i}a_{jk}^2)a_{ji}}{(\prod_{l \neq j}a_{li}) (\prod_{k \neq i} \prod_{l \neq j}(A^{-1})_{kl})} \lambda_6(A)\]
    \[\lambda_6(A) = \frac{\lambda_5(M_2(A)) \lambda_2(A)}{P(A)}\]
    Now we also find:
    \[D(A)_{ii} = det(A)^6\frac{\prod_{k=1}^3(A^{-1})_{ik}^2}{\prod_{l=1}^3a_{li}}\]
    \[(A^{-1})^{\sigma}_{ij} = \prod_{l \neq i} (A^{-1})_{lj}\]
    Thus:
    \[(D(A)(A^{-1})^{\sigma})_{ij} = det(A)^6\frac{\prod_{k=1}^3(A^{-1})_{ik}^2\prod_{l \neq i} (A^{-1})_{lj}}{\prod_{l=1}^3a_{li}}\]
    and
    \[M_6(A){ij} = \frac{(\prod_{k=1}^3(A^{-1})^2_{ik}) (\prod_{k = 1}^3a_{jk}^2)(\prod_{l \neq i}(A^{-1})_{lj})}{(\prod_{l = 1}^3a_{li})} \frac{\lambda_6(A)}{P(A^{-1})}\]
    Thus for the diagonal matrix $D'(A)$ with $D'(A)_{jj} = (\prod_{j=1}^3a_{jk}^2) \frac{\lambda_6(A)}{P(A)det(A)^6}$ we get:
    \[M_6(A) = D(A) (A^{-1})^{\sigma}D'(A)\]
    Thus the diagonal sends the projective class of the columns of $(A^{-1})^{\sigma}$ to the ones of the columns of $M_6(A)$.
\end{proof}
\begin{remark} \label{RemDiag}
    Like already mentioned the matrix $M_2(A)$ is up to a diagonal the unique matrix $M$ such that $M \sigma A$ fixes the coordinate points. Now we can study what happens if we insert those diagonals in the calculation. Let $D_i, i=2, \dots ,6$ be diagonal matrices and define $M'_1(A) := A, \, M'_i(A) := D_i M_2(M'_{i-1}(A)), \, i = 2, \dots , 6$. Then we find, by using that for any diagonal $D$ $det(D)^{-1}M_2(DM_i(A)) = M_2(M_i(A))D, \, M_2(M_i(A)D) = D^{-2}M_2(M_i(A))$: 
    \[M'_3(A) = D_3 M_3(A) D_2, \, M'_4(A) = D_4 D_2^{-2} M_4(A)D_3, \] \[ M'_5(A) = D_5D_3^{-2}M_5(A)D_4D_2^{-2}, \, M'_6(A) = D_6 D_4^{-2}D_2^4 M_6(A)D_5D_3^{-2}\]
    And using $D \sigma D = \sigma$ this gives us:
    \[M'_6(A) \sigma \dots \sigma M'_1(A)\sigma = D_6 D_4^{-2}D_2^4 M_6(A) \sigma \dotsm \sigma A \sigma = D_6 D_4^{-2}D_2^4 D(A)\]
    One can also calculate that $det(M'_1(A)^{-1}M'_3(A)^{-1}M'_5(A)^{-1})det(M'_2(A)M'_4(A)M'_6(A)) = d(A) det(D_6)$, thus the insertion of diagonals does the same change as a multiplication on the left-hand side of the relation with a diagonal.
\end{remark}
\begin{lemma} \label{TrivEx}
    Let $A_1$ be an affine representant of a $\textbf{k}$-automorphism of $S^{op}$ (where we use the splitting field of $p$), where the classes of $A_1$'s columns in $\mathbb{P}^2_{\textbf{L}}$  together with the coordinate points are in general positions. We define with the notation of Lemma \ref{LargeCalc}: 
    \[A_i := D_{\xi^{t(i)}}M_2(A_{i-1}), \, i=2, \dots , 5 , \, A_6 := D(A_1)^{-1}D_{\xi}^3D_{\xi^{t(6)}}M_2(A_5)\]
    Where $t(i) = -1, \, i \text{ even }, \, t(i) = 1 , \, i \text{ odd}$. Then $A_i$ are affine representants of automorphisms $\alpha_i$ and $\alpha_6 \chi \dots \alpha_1 \chi = id$ and this is an elementary relation whose determinant is in $(\textbf{k}^*)^3$.
\end{lemma}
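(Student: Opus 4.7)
The strategy is to verify four things in order: (i) each $A_i$ with $i \leq 5$ is an affine representant of a $\textbf{k}$-automorphism, alternating between $Aut_{\textbf{k}}(S^{op})$ and $Aut_{\textbf{k}}(S)$; (ii) the projective composite $\alpha_6 \chi \alpha_5 \chi \cdots \alpha_1 \chi$ equals the identity; (iii) the resulting word is an elementary relation; (iv) its determinant lies in $(\textbf{k}^*)^3$. The order matters because (ii) will retroactively force $\alpha_6$ to be defined over $\textbf{k}$, and (iii) uses both (i) and (ii).

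For (i) I induct on $i$, with the base case given by hypothesis. For the step $i-1 \to i$ with $i \leq 5$, Lemma \ref{FixMatr} shows that $D_\xi^{-1}(A^\Sigma)^{-1} = D_{\xi^{-1}} M_2(A)$ carries a $\textbf{k}$-representant of $Aut_{\textbf{k}}(S^{op})$ to one of $Aut_{\textbf{k}}(S)$; the mirror statement (swap $S \leftrightarrow S^{op}$ and $\xi \leftrightarrow \xi^{-1}$) gives the reverse direction via $D_\xi M_2(A)$, and the alternating sign $t(i)$ is precisely what matches the correct direction at each step. For (ii) I apply Remark \ref{RemDiag} with $D_2 = D_4 = D_{\xi^{-1}}$, $D_3 = D_5 = D_\xi$, and $D_6 = D(A_1)^{-1} D_\xi^3 D_{\xi^{-1}}$: the composite is projectively $D_6 D_4^{-2} D_2^4 D(A_1)$, and using $D_{\xi^a} D_{\xi^b} = D_{\xi^{a+b}}$ the product of all diagonal factors telescopes to $D(A_1)^{-1} D_{\xi^{3-1+2-4}} D(A_1) = I$. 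Since $\chi, \alpha_1, \dots, \alpha_5$ are defined over $\textbf{k}$ and the composite is the identity, $\alpha_6$ is automatically defined over $\textbf{k}$, completing (i) for $i = 6$. For (iii), setting $\tau_i := \alpha_{2i} \chi \alpha_{2i-1}$ with affine representant $\Sigma_i = A_{2i} \Sigma A_{2i-1}$, both conditions of Lemma \ref{ElemRel} follow from Lemma \ref{FixMatr}, which was engineered so that $\alpha_{2i} \chi \alpha_{2i-1}$ sends the base point of $\chi^{-1}$ to that of $\chi$ --- equivalently, each $\sigma_i$ preserves $\{[1{:}0{:}0],[0{:}1{:}0],[0{:}0{:}1]\}$ and consecutive links share base points through $\sigma$.

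The main calculational obstacle is (iv). By Remark \ref{RemDiag} the determinant of the word equals $d(A_1)\det(D_6)$; substituting from Lemma \ref{LargeCalc} together with $\det(D_\xi^3) = \xi^9$ and $\det(D_{\xi^{-1}}) = \xi^{-3}$ yields
\[ d(A_1)\det(D_6) \;=\; \frac{P(A_1)^{15}\,\xi^6}{P(A_1^{-1})^9\,\det(A_1^\Sigma)^{42}\,\det(A_1)^{60}}. \]
Every exponent ($15, 9, 42, 60, 6$) is divisible by $3$, so this expression is a cube in any field containing the five bases. The remaining subtle step is to argue that all five bases lie in $\textbf{k}^*$, so the expression is a cube in $\textbf{k}^*$ rather than only in $\textbf{L}^*$: $\xi \in \textbf{L}^g = \textbf{k}$ since $[\textbf{L}{:}\textbf{k}] = 3$; $\det(A_1) \in \textbf{k}^*$ by Theorem \ref{Determinant}; for $P(A_1)$ and $P(A_1^{-1})$ one uses the explicit form of an affine representant from Lemma \ref{matrixL} to identify $P$ with $\xi^3$ times a norm $N_{\textbf{L}/\textbf{k}}$, hence in $\textbf{k}^*$; and $\det(A_1^\Sigma) \in \textbf{k}^*$ because $\det(A_2) = \xi^{-3}/\det(A_1^\Sigma)$ lies in $\textbf{k}^*$ by Theorem \ref{Determinant} again. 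This yields the required triviality in $\textbf{k}^*/(\textbf{k}^*)^3$.
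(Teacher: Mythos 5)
Your route is essentially the paper's: Lemma \ref{FixMatr} gives the representant property of $A_2,\dots,A_5$, Remark \ref{RemDiag} collapses the composite to $D_6D_4^{-2}D_2^4D(A_1)=I_3$ and expresses the determinant of the word as $d(A_1)\det(D_6)$, Lemma \ref{LargeCalc} gives the explicit value, Lemma \ref{matrixL} places the bases in $\textbf{k}^*$, and Lemma \ref{ElemRel} certifies that the relation is elementary. Your bookkeeping in step (iv) of the five bases $\xi$, $\det(A_1)$, $P(A_1)$, $P(A_1^{-1})$, $\det(A_1^{\Sigma})$ is in fact more explicit than the paper's, which only records $P(A_1)\in\textbf{k}^*$.

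There is, however, one genuine gap, exactly where you replace a computation by a soft argument. You conclude that $\alpha_6$ is ``automatically defined over $\textbf{k}$'' because the other factors of an identity word are, and you declare that this ``completes (i) for $i=6$.'' But being a $\textbf{k}$-automorphism is a statement about the class $[A_6]\in\mathrm{PGL}_3(\textbf{L})$, whereas being an \emph{affine representant} is the stronger normalization $A_6^g=A_g^{-1}A_6A_g$ in $\mathrm{GL}_3(\textbf{L})$; the two differ by a scalar $\lambda\in\textbf{L}^*$ that your argument does not control. This matters precisely for step (iv): the determinant of the word is computed from genuine affine representants, and if $A_6=\lambda\tilde A_6$ with $\tilde A_6$ the true representant and $\lambda\notin\textbf{k}^*$, your computed product differs from the actual one by $\lambda^{3}$, which lies in $(\textbf{L}^*)^3$ but need not lie in $(\textbf{k}^*)^3$. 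Since $D_{\xi^{t(6)}}M_2(A_5)$ is a genuine representant by Lemma \ref{FixMatr}, what is missing is exactly the verification that the diagonal factor $D(A_1)^{-1}D_{\xi}^3$ itself satisfies the normalization, i.e.\ that its diagonal has the form $(d,g(d),g^2(d))$; this is a direct check using the formula for $D(A)_{ii}$ from Lemma \ref{LargeCalc} together with $A_1^g=(A_g^{op})^{-1}A_1A_g^{op}$, and it is the computation the paper's proof explicitly performs (``we can calculate that $D(A_1)^{-1}D_{\xi}^3$ is one too''). Once that verification is supplied, your proof is complete.
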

\begin{proof}
    We use Remark \ref{RemDiag} with $D_i = D_{\xi^{t(i)}}, i = 2, \dots , 5$ and $D_6 = D(A_1)^{-1}D_{\xi}^3D_{\xi^{t(6)}}$. Thus $A_6 \sigma \dots A_1 \sigma = D_6 D_4^{-2}D_2^4 = id$. \\
    The fact, that $A_i$ are affine representants comes from Lemma \ref{FixMatr} which implies, that $D_{\xi^{t(i)}}M_2(A_i)$ is an affine representant and we can calculate, that $D(A_1)^{-1}D_{\xi}^3$ is one too. In Lemma \ref{LargeCalc}, we can see that $d(A_1)$ equals $det(D(A_1))$ up to $(\textbf{k}^*)^3$. Thus we have using Lemma \ref{LargeCalc} that $det(A_2A_4A_6)det(A_1^{-1}A_3^{-1}A_5^{-1}) = det(D_{\xi})^6d(A_1) det(D(A_1))^{-1} \in (\textbf{k}^*)^3$, since by Lemma \ref{matrixL} we get, that $P(A_1) \in \textbf{k}^*$. The fact that this relation is elementary comes from the action of $A_i$ on the base points and Lemma \ref{ElemRel}. 
\end{proof}
\begin{theorem} \label{TrivialCase}
    All elementary relations of $S$ where both classes of base points are equivalent and their splitting field is a degree $3$ extension of $\textbf{k}$ have a trivial determinant.  
\end{theorem}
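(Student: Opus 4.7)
The plan is to combine Lemmas \ref{Subst}, \ref{DepPoint}, and \ref{TrivEx} so that the theorem reduces to the explicit computation already performed in Lemma \ref{TrivEx}.

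First, I will use Lemma \ref{Subst} to fix a convenient representative $\chi$ of the common equivalence class of $3$-links, chosen so that its affine representant is $\Sigma=(yz,xz,xy)$. Since the class of the $\tau_i$ coincides with that of the $\chi_i$, I take $\tau=\chi$, so that the same $\Sigma$ represents both. Up to the invariance of the determinant under change of representative provided by Lemma \ref{Subst}, I may further assume $\chi_1=\chi$ in the given relation.

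With this normalisation in place, the base points of $\tau_1$ in $S^{op}$ are encoded by an affine representant $A_1 \in \mathrm{GL}_3(\textbf{L})$ of some $\alpha_1 \in Aut_{\textbf{k}}(S^{op})$: concretely, $\tau_1 = \chi \alpha_1$ on the $\varphi,\varphi^{op}$ models, and the classes of the columns of $A_1$ are the base points of $\tau_1$. Lemma \ref{TrivEx} then produces from this $A_1$ a sequence $A_2,\dots,A_6$ of affine representants of automorphisms $\alpha_2,\dots,\alpha_6$ and an explicit elementary relation $\alpha_6 \chi \alpha_5 \chi \cdots \alpha_1 \chi = \mathrm{id}$ whose first link has exactly the prescribed base points of $\tau_1$ and whose determinant lies in $(\textbf{k}^*)^3$. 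By Lemma \ref{DepPoint}, the determinant of the given elementary relation depends only on $\chi_1 = \chi$ and the base points of $\tau_1$, hence agrees with the determinant of the constructed relation and is trivial.

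The main obstacle is the general position hypothesis in Lemma \ref{TrivEx}: the columns of $A_1$, together with the coordinate points $[1{:}0{:}0],[0{:}1{:}0],[0{:}0{:}1]$, must be in general position in $\mathbb{P}^2_\textbf{L}$ (in particular, no column of $A_1$ lies on a coordinate line, so that the products of entries appearing in the formulas of Lemma \ref{LargeCalc} do not vanish). I expect this to be achievable by an additional application of Lemma \ref{Subst}: replacing $\chi$ by an equivalent link $\gamma_1 \chi \gamma_2$ shifts $A_1$ by multiplication with representants of $\gamma_1,\gamma_2$, which gives enough freedom to move the configuration into general position without changing the determinant of the word. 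In the residual degenerate cases one can alternatively invoke the fact that, in an elementary relation, the base points of $\tau_1$ and of $\chi^{-1}$ form genuinely distinct $3$-points of $S^{op}$ with the same splitting field, which by Lemma \ref{3pLem}(3) can be interchanged by an automorphism of $S^{op}$ and used to avoid the exceptional locus.
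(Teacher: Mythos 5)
Your proposal follows essentially the same route as the paper: normalise $\chi=\tau$ to the link represented by $\Sigma$ via Lemma \ref{Subst}, reduce via Lemma \ref{DepPoint} to a single relation with the prescribed base points of $\tau_1$, and conclude with the explicit construction of Lemma \ref{TrivEx}. The general-position hypothesis you flag is the one point the paper passes over silently; it is in fact automatic, since the six base points of an elementary relation are the blown-up points of the central degree-$3$ del Pezzo surface of Figure \ref{fig: rel} and are therefore already in general position.
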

\begin{proof}
     We fix an affine representant $A_1$ (where the classes of $A_1$'s columns in $\mathbb{P}^2_{\textbf{L}}$ are in general positions) of a $\textbf{k}$-automorphism of $S$. Due to Corollary \ref{DepPoint} we just need to prove the theorem for one possibility for the other $5$ automorphism. In this case, we also have only one kind of link for which we will use the one represented by $\Sigma$. We choose the $A_i$ like it is given in Lemma \ref{TrivEx}, which implies that the determinant is trivial.
\end{proof}
\subsection{The relation with two classes of $3$-points}
Now if the base points of $\tau,\chi$ are not equivalent we want to again reduce this problem to the case of Lemma \ref{LargeCalc}. The problem now is that $\tau \neq \chi$ and we thus need some information on the base points of $\tau$ and its inverse. The next few results will give these points, an affine representant of $\tau$ and the $\textbf{k}$-automorphisms to find a relation.
\begin{lemma}
    Let $p,q \in S$ be two non-equivalent $3$-points with splitting fields $\textbf{L}_p, \textbf{L}_q$ such that $ [\textbf{L}_p:\textbf{k}] = [\textbf{L}_q:\textbf{k}]=3$. Let $\textbf{F}$ be the smallest common superfield of $\textbf{k}$ such that both points are defined. Then we find $Aut_{\textbf{k}}(\textbf{F}) \simeq Aut_{\textbf{k}}(\textbf{L}_p) \times Aut_{\textbf{k}}(\textbf{L}_q)$ 
\end{lemma}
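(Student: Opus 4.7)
The plan is to reduce this to a standard Galois theory statement about compositums. Let me sketch the steps.

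First, I would observe that both $\textbf{L}_p/\textbf{k}$ and $\textbf{L}_q/\textbf{k}$ are already Galois. Indeed, by Lemma \ref{3pLem}(1), the Galois closure of either splitting field has Galois group either $\mathbb{Z}/3\mathbb{Z}$ or $\text{Sym}_3$ over $\textbf{k}$; since we are given that $[\textbf{L}_p:\textbf{k}] = [\textbf{L}_q:\textbf{k}] = 3$, the only option is $\mathbb{Z}/3\mathbb{Z}$, so each extension is Galois. In particular $\text{Aut}_{\textbf{k}}(\textbf{L}_p) = \text{Gal}(\textbf{L}_p/\textbf{k}) \cong \mathbb{Z}/3\mathbb{Z}$ and likewise for $q$.

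Next, I would use the non-equivalence hypothesis to show $\textbf{L}_p \neq \textbf{L}_q$. This is exactly the content (contrapositive) of Lemma \ref{3pLem}(3): if $p$ and $q$ had the same splitting field, there would exist $\alpha \in \text{Aut}_{\textbf{k}}(S)$ with $\alpha(p) = q$, making $p$ and $q$ equivalent. So the assumption forces $\textbf{L}_p \neq \textbf{L}_q$. Since both fields have prime degree $3$ over $\textbf{k}$, their intersection $\textbf{L}_p \cap \textbf{L}_q$ has degree dividing $3$, so either equals $\textbf{k}$ or equals all of $\textbf{L}_p = \textbf{L}_q$; the previous step rules out the latter, giving $\textbf{L}_p \cap \textbf{L}_q = \textbf{k}$.

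Finally, I would invoke the standard fact that the compositum of two Galois extensions is Galois, so $\textbf{F} = \textbf{L}_p \textbf{L}_q$ is Galois over $\textbf{k}$ and therefore $\text{Aut}_{\textbf{k}}(\textbf{F}) = \text{Gal}(\textbf{F}/\textbf{k})$. The restriction map
\[
\rho : \text{Gal}(\textbf{F}/\textbf{k}) \longrightarrow \text{Gal}(\textbf{L}_p/\textbf{k}) \times \text{Gal}(\textbf{L}_q/\textbf{k}), \qquad \sigma \mapsto (\sigma|_{\textbf{L}_p}, \sigma|_{\textbf{L}_q})
\]
is injective (an element fixing both $\textbf{L}_p$ and $\textbf{L}_q$ fixes their compositum $\textbf{F}$). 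Surjectivity follows from $\textbf{L}_p \cap \textbf{L}_q = \textbf{k}$, which forces $[\textbf{F}:\textbf{k}] = [\textbf{L}_p : \textbf{k}] \cdot [\textbf{L}_q : \textbf{k}] = 9$; since both source and target have order $9$, the injection $\rho$ is an isomorphism.

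There is no real obstacle here — the only non-routine step is identifying that non-equivalence of $p$ and $q$ translates into $\textbf{L}_p \neq \textbf{L}_q$, which is immediate from Lemma \ref{3pLem}(3). Everything else is textbook Galois theory applied to two linearly disjoint cyclic cubic extensions.
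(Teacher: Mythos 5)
Your proposal is correct and follows essentially the same route as the paper: establish $\textbf{L}_p \cap \textbf{L}_q = \textbf{k}$ from non-equivalence (via Lemma \ref{3pLem}(3)) and primality of the degree, deduce $[\textbf{F}:\textbf{k}]=9$ using normality, and conclude via injectivity of the restriction map plus an order count. Your version is slightly more explicit than the paper about why non-equivalence forces $\textbf{L}_p \neq \textbf{L}_q$, but this is an elaboration rather than a different argument.
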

\begin{proof}
    We take $\omega_p, \omega_q \in \Bar{\textbf{k}}$ such that $\textbf{L}_p = \textbf{k}(\omega_p), \textbf{L}_q = \textbf{k}(\omega_q)$ by the Primitive Element Theorem. Then we find, that $\textbf{F} = \textbf{k}(\omega_p,\omega_q)$. Since $p,q$ are non equivalent and both field extentions are of degree $3$, we find that $\textbf{L}_p \cap \textbf{L}_q = \textbf{k}$ and since $\textbf{k} \subset \textbf{L}_p, \textbf{k} \subset \textbf{L}_q$ are normal $\textbf{k} \subset \textbf{F}$ must be of degree $9$, because no zero of $min(\textbf{k}, \omega_q)$ can be in $\textbf{L}_p$ since it is normal over $\textbf{k}$. Then the map
    \[ Aut_{\textbf{k}}(\textbf{F}) \rightarrow Aut_{\textbf{k}}(\textbf{L}_p) \times Aut_{\textbf{k}}(\textbf{L}_q)\]
    \[g \mapsto (g_{|\textbf{L}_p}, g_{|\textbf{L}_q})\]
    is injective, since $\omega_p, \omega_q$ generate $\textbf{F}$ and thus since both sides contain $9$ elements it is also surjective and thus is an isomorphism.
    \newline
\end{proof}
\begin{convention} \label{con:2SF}
    From now on we write $\textbf{L}_p, \textbf{L}_q$ for the splitting fields of non-equivalent $3$-points $p,q$ of $S$ and $F := \textbf{L}_p\textbf{L}_q$. We always will identify $Aut_{\textbf{k}}(\textbf{F} )$ as $ Aut_{\textbf{k}}(\textbf{L}_p) \times Aut_{\textbf{k}}(\textbf{L}_q)$. We will also extend our isomorphisms $\varphi$ like in Lemma \ref{3pLem} (for $p$) to $\textbf{F}$. Since this can still be represented over $\textbf{L}_p$, the $\textbf{L}_q$ part of the Galois group of $\textbf{F}$ does not affect $\varphi$. In the case where $\textbf{L}_q$ is a degree $3$ extension we take $g'$ a generator of $\textrm{Gal}(\textbf{L}_q,\textbf{k})$. 
\end{convention}
\begin{lemma}
    Let $\textbf{k}, \, \textbf{L}_p, \, \textbf{L}_q, \, \textbf{F}, \varphi$ be like in Convention \ref{con:2SF}. For $A_g$ as usual we find, that for all $(g,u) \in Aut_{\textbf{k}}(\textbf{F}) \simeq Aut_{\textbf{k}}(\textbf{L}_p) \times Aut_{\textbf{k}}(\textbf{L}_q)$ we get, that $\varphi \circ (g,u) \circ \varphi^{-1} = A_g \circ (g,u)$ and $\varphi \circ (id,u) \circ \varphi^{-1} = (id,u)$. We then find that for each $3$-point $q'$ equivalent to $q$, $\varphi(q')$ is a fixpoint of $A_g \circ (g,id)$ and $\{ id \} \times Aut_{\textbf{k}}(\textbf{L}_q)$ permutes the components of $\varphi(q')$, like it does with $q'$ in $S_{\textbf{F}}$. The properties of Theorem \ref{AffineRep} also remain valid for the extension $\textbf{k} \subset \textbf{F}$, where we replace $g^{i}$ with $(g^{i},u)$ and choose $A_{(g^i,u)} = A_{g^i}$.  
\end{lemma}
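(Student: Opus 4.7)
The plan is to split each Galois element according to the decomposition $\textrm{Gal}(\textbf{F}/\textbf{k})\simeq \textrm{Gal}(\textbf{L}_p/\textbf{k})\times \textrm{Gal}(\textbf{L}_q/\textbf{k})$ and to exploit two features of the setup: $\varphi$, as extended to $\textbf{F}$, is still defined over $\textbf{L}_p$, while the matrix $A_g$ has coefficients in $\textbf{k}$ (because $\xi\in\textbf{L}_p^g=\textbf{k}$). First I would verify the second identity $\varphi\circ (id,u)\circ \varphi^{-1}=(id,u)$: the element $(id,u)$ fixes $\textbf{L}_p$ pointwise, so $\varphi^{(id,u)}=\varphi$ and the conjugation collapses to $(id,u)$. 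Writing $(g,u)=(g,id)\circ(id,u)$ and combining this with $\varphi\circ g\circ\varphi^{-1}=A_g\circ g$ from Lemma \ref{3pLem} — which extends to $\textbf{F}$ because $A_g$ is $\textbf{k}$-rational — then yields $\varphi\circ(g,u)\circ\varphi^{-1}=A_g\circ(g,u)$.

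For the fixpoint claim the key observation is that equivalent $3$-points share a splitting field, so every component of $q'$ lies in $\textbf{L}_q\subset \textbf{F}$ and is hence fixed by $(g,id)$, which is trivial on $\textbf{L}_q$. Applying the relation above with $u=id$ gives $\varphi(q'_i)=\varphi((g,id)(q'_i))=A_g\circ(g,id)(\varphi(q'_i))$, so each component of $\varphi(q')$ is a fixed point of $A_g\circ(g,id)$. The permutation action of $\{id\}\times \textrm{Gal}(\textbf{L}_q/\textbf{k})$ then follows at once from $\varphi\circ(id,u)=(id,u)\circ\varphi$: intertwining by $\varphi$ identifies the action on the components of $q'$ with the action on the components of $\varphi(q')$.

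For the last assertion I would rerun the proof of Theorem \ref{AffineRep} over the extension $\textbf{k}\subset \textbf{F}$ with the choice $A_{(g^i,u)}:=A_{g^i}$. The cocycle identities required to construct an affine representant, of the shape $A_{(g^{i_1},u_1)(g^{i_2},u_2)}=\lambda\cdot A_{(g^{i_1},u_1)}\cdot (g^{i_1},u_1)(A_{(g^{i_2},u_2)})$, collapse to those already checked over $\textbf{L}_p$, since $A_{g^j}$ has $\textbf{k}$-coefficients and is therefore Galois-invariant; the resulting scalars $\lambda$ coincide with those previously computed. Combined with the vanishing of $H^1(\textrm{Gal}(\textbf{F}/\textbf{k}),\textbf{F}^*)$, which replaces the triviality used over $\textbf{L}_p$, the whole construction transports verbatim. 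The step I expect to require the most care is keeping track of the fact that the $\textbf{L}_q$-direction of the Galois group is essentially transparent to the construction, so that no additional cocycle is introduced and the existence/uniqueness up to $\textbf{k}^*$ of the representant is preserved.
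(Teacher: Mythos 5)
Your proposal is correct and follows essentially the same route as the paper: both rest on the observation that the extension of $\varphi$ is still defined over $\textbf{L}_p$, so that $(\varphi^{-1})^{(g,u)}=(\varphi^{-1})^{g}$ gives the two conjugation identities, the fixpoint and permutation claims follow from the components of $q'$ being defined over $\textbf{L}_q$ together with the intertwining relations, and the affine representation transports because everything in sight is $\textbf{L}_p$-rational. The only cosmetic difference is that the paper deduces the last assertion in one line from the $\textbf{L}_p$-rationality of $\varphi\, Bir_{\textbf{k}}(S)\,\varphi^{-1}$ rather than rerunning the cocycle argument over $\textbf{F}$ as you propose; both are valid.
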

\begin{proof}
    We take $\varphi$ as we did before for $\textbf{L}_p$, since it can be defined over $\textbf{L}$ we get, that $\varphi \circ (g,u) \circ \varphi^{-1} \circ (g,u)^{-1} = \varphi \circ (\varphi^{-1})^{(g,u)} = \varphi \circ (\varphi^{-1})^g = A_g$. And similarly for $(id,u)$. We can also find, that $\varphi \circ (g,id) \circ \varphi^{-1}$ fixes $q_i, i=1,2,3$ and thus the same holds for $A_g \circ (g,id)$ with $\varphi(q_i)$. Since $(id,u)$ permutes the $q_i$ and $\varphi \circ (id,u) \circ \varphi^{-1} = (id,u)$, the same holds for $(id,u)$ with $\varphi(q_i)$. Since the elements in $\varphi Bir_{\textbf{k}}(S) \varphi^{-1}$ can be defined over $\textbf{L}_p$ we can choose the same affine representation as in Theorem \ref{AffineRep}. 
\end{proof}
\noindent
To represent our $q$ in a good way we look at the next lemma. For this we need that $\textbf{k} \subset \textbf{L}_q$ is of degree $3$.
\begin{lemma} \label{3-pointNonEq}
    If for $3$-points $p, q$ of $S$, $\textbf{k} \subset \textbf{L}_q, \textbf{L}_p$ are of degree $3$ we can construct the following representations of $q$ over $\textbf{F} = \textbf{L}_p \textbf{L}_q$. There is a $b \in \textbf{F}$ such that $N_{\textbf{F} / \textbf{L}_q}(b) = \xi^{-2}$ and for each such $b$ we define  
    \[B(b) := \begin{pmatrix}
        1 & 1 & 1 \\
        \frac{1}{\xi g(b)} & \frac{1}{\xi g(g'(b))} & \frac{1}{\xi g(g'(g'(b)))} \\
        b & g'(b) & g'(g'(b))
    \end{pmatrix}\] The classes of $B(b)$'s columns form a $3$-point over $S$ via the usual isomorphism $\varphi$ (extended like in Convention \ref{con:2SF}) and each $3$-point equivalent to $q$ not on the coordinate lines can be given like this. We also have $A_gB(b)^g = \xi B(b) D_b^g$ where $D_b$ is a diagonal matrix with diagonal elements $b, g'(b),g'(g'(b))$. We also can see directly by definition, that $B(b)^{g'} = B(b) \begin{pmatrix}
        0 & 0 & 1 \\
        1 & 0 & 0 \\
        0 & 1 & 0
    \end{pmatrix}$ (where we will later refer to the right hand side matrix as $(123)$). 
\end{lemma}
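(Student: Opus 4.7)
The plan is to derive the form of $B(b)$ directly from the constraints that the Galois action $\mathrm{Gal}(\textbf{F}/\textbf{k}) \cong \langle g \rangle \times \langle g' \rangle$ imposes on the coordinates of a $3$-point equivalent to $q$, and then to verify the two identities by direct matrix computation. The two key observations are: the components of any $3$-point equivalent to $q$ are defined over $\textbf{L}_q$, hence fixed pointwise by $g \in \mathrm{Gal}(\textbf{F}/\textbf{L}_q)$, so the relation $\varphi \circ g \circ \varphi^{-1} = A_g \circ g$ becomes the projective equation $[g(\varphi(q'_i))] = [A_g^{-1} \varphi(q'_i)]$; and $\varphi$ is defined over $\textbf{L}_p$, so $g'$ commutes with $\varphi$ and acts on the components exactly as on $q'$ itself, cyclically.

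Concretely, I will take a $3$-point $q'$ equivalent to $q$ whose components miss the coordinate lines (using Lemma \ref{3pLem} (3) to move $q$ by a $\textbf{k}$-automorphism if needed) and write $\varphi(q'_1) = [1 : \alpha : b]$ with $\alpha, b \in \textbf{F}$. Expanding $[A_g^{-1} \varphi(q'_1)] = [\xi\alpha : \xi b : 1]$ and comparing to $[1 : g(\alpha) : g(b)]$ forces, from the third-to-first coordinate ratio, $\alpha = 1/(\xi g(b))$, and from the second-to-first ratio, $g(\alpha) = b/\alpha$. Substituting one into the other yields $b \cdot g(b) \cdot g^2(b) = \xi^{-2}$, i.e.\ $N_{\textbf{F}/\textbf{L}_q}(b) = \xi^{-2}$, since $\mathrm{Gal}(\textbf{F}/\textbf{L}_q) = \langle g \rangle$. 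Applying $g'^{i-1}$ to $\varphi(q'_1)$ (and using that $g'$ commutes with $g$ and fixes $\xi$) produces the other columns, yielding exactly the matrix $B(b)$; this simultaneously gives the existence of such a $b$ (from the given $q$), the converse parametrisation claim, and the identity $B(b)^{g'} = B(b)\cdot(123)$, since $g'$ cyclically shifts $b, g'(b), g'^2(b)$ and the rest of each column depends on these through $g$.

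For the identity $A_g B(b)^g = \xi B(b) D_b^g$, I will simply multiply out both sides entrywise, using $g g' = g' g$ throughout. The first two rows match tautologically (the ``$1$'' row from the top comes from the third row of $B(b)^g$ after the $A_g$-rotation, and the $\xi g'^{i}(g(b))$ row comes from the first row of $B(b)^g$). Each of the three third-row equalities collapses to the single identity $\xi^2 \cdot b \cdot g(b) \cdot g^2(b) = 1$, which is precisely the norm condition on $b$. Thus the norm condition plays a double role: it is both the compatibility constraint that singles out valid coordinates and the identity that makes the displayed transformation law hold. Once these two identities are known, the fact that $B(b)$'s columns correspond to a $\textbf{k}$-rational $3$-point on $S$ follows because they show the set of columns, viewed as projective points, is $\mathrm{Gal}(\textbf{F}/\textbf{k})$-stable, so its preimage under $\varphi$ descends to $\textbf{k}$.

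The main obstacle is purely bookkeeping: keeping straight the two commuting Galois automorphisms $g$ and $g'$, checking that all entries of $B(b)^g$ are correctly obtained by applying $g$ through $\xi \in \textbf{k}$, and confirming that the same norm identity $N_{\textbf{F}/\textbf{L}_q}(b) = \xi^{-2}$ appears in both derivations. No further input is needed beyond Lemma \ref{3pLem} and the existence of the $3$-point $q$ itself; morally, the existence of $b$ is the explicit manifestation of the triviality of the cocycle $[A_g]$ after base change to $\textbf{F}$ over $\textbf{L}_q$, since $S_{\textbf{L}_q}$ is split.
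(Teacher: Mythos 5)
Your proposal is correct and follows essentially the same route as the paper: both derive the shape of the columns and the norm condition $N_{\textbf{F}/\textbf{L}_q}(b)=\xi^{-2}$ from the requirement that each component, being defined over $\textbf{L}_q$, is projectively fixed by $A_g\circ g$, then obtain the remaining columns as the $g'$-orbit, and verify the converse and the identities $A_gB(b)^g=\xi B(b)D_b^g$ and $B(b)^{g'}=B(b)(123)$ by direct computation in which the norm condition is exactly the identity needed. Your observation that the same norm identity drives both the parametrisation and the transformation law is implicit in the paper's (terser) argument, so there is nothing to add.
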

\begin{proof}
    We look at a $[1:b_1:b_2]$ that satisfies $[1:\xi^{-1}g(b_2^{-1}): \xi^{-1}g(\frac{b_1}{b_2})] = [\xi g(b_2):1:g(b_1)] \stackrel{!}{=} A_g(g([1:b_1:b_2])) = [1:b_1:b_2]$ from which we can deduce $b_1 = g(b_2^{-1})\xi^{-1}$ and $N_{\textbf{F}/\textbf{L}_q}(b_2) = \xi^{-2} $. The fact that such a $3$-point is a $g'$ orbit gives us the structure of $B(b)$. Since $q$ exists and is not on one of the lines between the components of $p$, we get the existence of such $b, B(b)$. Conversely, we can see that for every $b \in \textbf{F}$ such that $N_{\textbf{F} / \textbf{L}_q}(b) = \xi^{-2}$ we get $A_g(g([1:\frac{1}{\xi g(b)}:b])) = [\xi g(b):1:\frac{1}{\xi g(g(b))}] = [1:\frac{1}{\xi g(b)}: b]$, thus the columns of $B(b)$ are fixed by $A_g g$. Since the columns of $B$ form an orbit with respect to $Aut_{\textbf{k}}(\textbf{L}_q)$ we get with the previous property, that the columns form a $3$-point. The properties $A_gB(b)^g = \xi B(b) D_b^g$ can be calculated using $A_g(g([1:\frac{1}{\xi g(b)}:b])) = [\xi g(b):1:\frac{1}{\xi g(g(b))}] = [1:\frac{1}{\xi g(b)}: b]$. By the construction of $B(b)$ we can directly calculate that \[B(b)^{g'} = B(b) \begin{pmatrix}
        0 & 0 & 1 \\
        1 & 0 & 0 \\
        0 & 1 & 0
    \end{pmatrix}\].
\end{proof}
\noindent
Given such matrices we can find a $\tau$ that has $q$ as base points if the splitting field of $q$ is a degree $3$ extension.

\begin{lemma} \label{BB'}
    Take $\textbf{L}_p, \textbf{L}_q, \textbf{F}, \varphi$ as in Convention \ref{con:2SF}. We can take $b \in \textbf{F}$ be such that $N_{\textbf{F} / \textbf{L}_q}(b) = \xi^{2}$ and with Lemma \ref{3-pointNonEq} we can define $B := B(b)$ (representing a $3$-point of $S^{op}$) and $B':=B(b^{-1})$ (representing a $3$-point of $S$). \\ There is a $\tilde \lambda \in L_p$ such that $\tilde \Sigma := \tilde \lambda B' \Sigma B^{-1}$ is an affine representation of a $3$-link from $S^{op}$ to $S$. We will later refer to the projectivisation of $\tilde \Sigma$ as $\tilde \sigma$.
\end{lemma}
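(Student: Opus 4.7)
The plan is to exhibit $\tilde\Sigma$ as a quadratic Cremona transformation on $\mathbb{P}^2_{\textbf{F}}$ and verify that, for a suitable $\tilde\lambda \in \textbf{L}_p$, it satisfies the affine-representant equation (the analogue of (\ref{eq:RepB}) for maps $S^{op}\dashrightarrow S$). Combined with Lemma \ref{BaseP}, which forces any $\textbf{k}$-birational map of degree $2$ between non-trivial Severi-Brauer surfaces based at a $3$-point to be a $3$-link, this yields the statement.

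First, the projectivization of $B'\Sigma B^{-1}$ is obtained from $\sigma$ by the linear change of coordinates $B^{-1}$ followed by $B'$; it is therefore a quadratic Cremona whose base points are the classes of the columns of $B$ (the images of the coordinate points under $B$) and whose inverse base points are the classes of the columns of $B'$, i.e., the prescribed $3$-points of $S^{op}$ and $S$.

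Second, I would verify the required Galois equivariance in two stages, corresponding to the decomposition $\textrm{Gal}(\textbf{F}/\textbf{k})\simeq \textrm{Gal}(\textbf{L}_p/\textbf{k})\times\textrm{Gal}(\textbf{L}_q/\textbf{k})$ from Convention \ref{con:2SF}. For $g' \in \textrm{Gal}(\textbf{L}_q/\textbf{k})$, Lemma \ref{3-pointNonEq} gives $B^{g'} = B\cdot(123)$ and $(B')^{g'} = B'\cdot(123)$, and one checks directly that $\Sigma$ commutes with the cyclic permutation of coordinates; since $\textbf{L}_p$ is fixed by $g'$, any $\tilde\lambda \in \textbf{L}_p$ handles the $g'$-equivariance. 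For $g \in \textrm{Gal}(\textbf{L}_p/\textbf{k})$, one uses the diagonal conjugation relations $A_g B^g = \xi B D_b^g$ and the analogous identity for $B'$ from Lemma \ref{3-pointNonEq}, combined with the elementary Cremona identity $\Sigma\,D\,\Sigma = xyz \cdot\mathrm{diag}(d_2d_3, d_1d_3, d_1d_2)$ for a diagonal $D = \mathrm{diag}(d_1, d_2, d_3)$ and the computation $A_g(A_g^{op})^{-1} = \mathrm{diag}(\xi^2,1,1)$. A direct calculation then reduces the equivariance equation $\tilde\Sigma^g = \xi \cdot A_g^{-1}\tilde\Sigma A_g^{op}$ to a scalar identity $g(\tilde\lambda)/\tilde\lambda = \mu_g$ for some explicit $\mu_g \in \textbf{L}_p^*$ depending only on $b$ and $\xi$.

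Third, $g \mapsto \mu_g$ is a $1$-cocycle in $Z^1(\textrm{Gal}(\textbf{L}_p/\textbf{k}), \textbf{L}_p^*)$, hence a coboundary by Hilbert 90 (as in the proof of Proposition \ref{AffineRep}); solving the coboundary equation produces the required $\tilde\lambda \in \textbf{L}_p^*$. With this $\tilde\lambda$, $\tilde\Sigma$ is an affine representant of a $\textbf{k}$-birational map $S^{op}\dashrightarrow S$ of degree $2$, and Lemma \ref{BaseP} identifies it as the desired $3$-link. The main obstacle is the $g$-equivariance bookkeeping in the second step: the various $\xi$-powers arising from $A_g$, $A_g^{op}$, the conjugation of $\Sigma$ by the diagonals coming from $B^g$ and $(B')^g$, and the correction $A_g(A_g^{op})^{-1}$ must cancel so that the residual obstruction lies in $\textbf{L}_p^*$ and defines a genuine cocycle, allowing Hilbert 90 over the degree-$3$ extension $\textbf{L}_p/\textbf{k}$ to produce $\tilde\lambda$.
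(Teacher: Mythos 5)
Your proposal follows essentially the same route as the paper: check $g'$-invariance via $B^{g'}=B\cdot(123)$ and the fact that $(123)$ commutes with $\Sigma$, reduce the $g$-equivariance to a scalar condition using the relations $A_gB(b)^g=\xi B(b)D_b^g$ together with $\Sigma D = \det(D)D^{-1}\Sigma$, and solve for $\tilde\lambda$ by Hilbert 90 over the cyclic extension $\textbf{L}_p/\textbf{k}$ (the paper just writes the single coboundary equation $\tilde\lambda/g(\tilde\lambda)=\xi^2/g(\det(D_b))$ explicitly, which is the same thing since the cocycle is determined by its norm-one value on the generator $g$). The approach and all key ingredients match; no substantive difference.
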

\begin{proof}
The existence of $b$ follows from Lemma \ref{3-pointNonEq}. Since $det(D_b) = N_{\textbf{F}/\textbf{L}_p}(b) \in \textbf{L}_p$ has norm $\xi^6$ over $\textbf{k}$, the norm over $\textbf{k}$ of $\frac{\xi^2}{g(det(D_b))}$ is $1$ and therefore, using Hilbert's Theorem 90, we find a $\tilde \lambda \in L_p$ such that $\frac{\tilde \lambda}{g(\tilde \lambda)} = \frac{\xi^2}{g(det(D_b))}$. Now we get:
\[(\tilde \Sigma)^{g'} = \tilde \lambda B'^{g'} \Sigma (B^{-1})^{g'} = \tilde \lambda B' (123) \Sigma (123)^{-1} B^{-1} = \tilde \lambda B' (123) (123)^{-1} \Sigma B^{-1} = \tilde \Sigma\]
Which proves, that $\tilde \sigma$ is defined over $\textbf{L}_p$. To prove the property from Theorem \ref{AffineRep} (since $\tilde \sigma$ is surely $\textbf{F}$-birational it is also sufficent) we look at:
\[\xi^{-1}(\tilde \Sigma)^g =g(\tilde \lambda)\xi^{-1} B'^g \Sigma (B^{-1})^g =g(\tilde \lambda) \xi^{-1} (\xi A_g^{-1} B' D_{b^{-1}}^g) \Sigma (\xi (D_{b}^{-1})^g B^{-1} A_g^{op})\] \[ = g(\tilde \lambda) \frac{\xi^2}{g(det(D_b))} A_g^{-1} B' \Sigma B^{-1}A_g^{op} = A_g^{-1}(\tilde \Sigma)A_g^{op}\]
by Lemma \ref{3-pointNonEq}. This shows that $\tilde \Sigma$ is an affine representant of a $3$-link from $S^{op}$ to $S$.
\end{proof}
\noindent
If we are looking for $A_3 \sigma A_2$ that sends the base points of $\tilde \sigma^{-1}$ to the ones of $\tilde \sigma$ we can look at it as $B^{-1} A_3 \sigma A_2 B'$ that fixes $\{ [1:0:0], [0:1:0], [0:0:1]\}$. This leads us to $A_3 := B D_h ((A_2B')^{\sigma})^{-1}$, where $D_h$ is a diagonal. We want to find $D_h$ such that $A_3$ is the affine representant of an element of $Aut_{\textbf{k}}(S^{op})$.
\begin{lemma} \label{A3}
    Let $\textbf{L}_p, \textbf{L}_q,\textbf{F}$ be as in our Convention \ref{con:2SF}, where we assume $\textbf{L}_q$ is a degree $3$ extension of $\textbf{k}$. There is a $b \in \textbf{F}$ such that $N_{\textbf{F}/\textbf{L}_q}(b) = \xi^2$ and $B:= B(b), B'.= B(b^{-1})$. Let $N$ be the affine representant of a $\textbf{k}$-automorphism of $S$. We can find a $\lambda_b \in \textbf{F}$ such that for $D_h$ being the diagonal matrix with $\lambda_b, g'(\lambda_b), g'(g'(\lambda_b))$ on its diagonal (where $g'$ is as in Convention \ref{con:2SF}) $M := B D_h ((NB')^{\sigma})^{-1}$ is a affine representation of a $\textbf{k}$-automorphism of $S^{op}$.
\end{lemma}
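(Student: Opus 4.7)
The plan is to verify that $M := B D_h((NB')^\sigma)^{-1}$ satisfies the two Galois conditions required of an affine representant of a $\textbf{k}$-automorphism of $S^{op}$, namely invariance under $(\mathrm{id}, g')$ and covariance $M^{(g,\mathrm{id})} = (A_g^{op})^{-1} M A_g^{op}$ under $(g, \mathrm{id})$. The first will be automatic from the cyclic form of $D_h$, while the second reduces to a single cocycle equation in $\lambda_b$ which Hilbert's Theorem 90 solves using the hypothesis $N_{\textbf{F}/\textbf{L}_q}(b) = \xi^2$.

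For the $(\mathrm{id}, g')$-invariance I would use that $N$ is defined over $\textbf{L}_p$ (hence $N^{g'} = N$), together with $B^{g'} = B\cdot (123)$ and $(B')^{g'} = B'\cdot (123)$ from Lemma~\ref{3-pointNonEq}, and the elementary identity $(XP)^\sigma = X^\sigma P$ for any permutation matrix $P$ (since the $\sigma$-operation acts on each column independently). Combining these one gets $M^{g'} = B\cdot (123)\, D_h^{g'}\, (123)^{-1} ((NB')^\sigma)^{-1}$, so invariance reduces to $(123) D_h^{g'} (123)^{-1} = D_h$; by inspection this holds for $D_h = \operatorname{diag}(\lambda_b, g'(\lambda_b), g'(g'(\lambda_b)))$ and any $\lambda_b \in \textbf{F}^*$, because conjugation by $(123)$ cyclically shifts the diagonal entries and $g'$ has order $3$.

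For the $(g, \mathrm{id})$-covariance I would compute $M^g$ using $B^g = \xi^{-1}(A_g^{op})^{-1} B D_b^g$ and $(B')^g = \xi A_g^{-1} B' D_{b^{-1}}^g$ (consequences of Lemma~\ref{3-pointNonEq}) and $N^g = A_g^{-1} N A_g$. The essential tool is the columnwise behaviour of $\sigma$: from the functional equation $A_g \Sigma = \xi \Sigma A_g^{op}$ together with $\Sigma(\mu v) = \mu^2 \Sigma(v)$, one reads off
\[ (A_g^{-1} X)^\sigma = \xi^{-1}(A_g^{op})^{-1} X^\sigma, \qquad (XD)^\sigma = X^\sigma D^2, \qquad (\mu X)^\sigma = \mu^2 X^\sigma, \]
for every matrix $X$, diagonal $D$ and scalar $\mu$. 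Substituting these into $(NB')^g = \xi A_g^{-1} NB' D_{b^{-1}}^g$ and inverting produces
\[ M^g = \xi^{-2}(A_g^{op})^{-1} B (D_b^g)^3 D_h^g ((NB')^\sigma)^{-1} A_g^{op}, \]
so after cancelling the common outer factors $A_g^{op},((NB')^\sigma)^{-1}$ and then $B$ on the left, the target identity $M^g = (A_g^{op})^{-1} M A_g^{op}$ becomes the diagonal equation $(D_b^g)^3 D_h^g = \xi^2 D_h$. Reading off the first entry reduces this to the single scalar cocycle
\[ \frac{g(\lambda_b)}{\lambda_b} = \frac{\xi^2}{g(b^3)}, \]
the other two entries being its $g'$-conjugates.

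Finally, $\textbf{F}/\textbf{L}_q$ is cyclic of order $3$ with generator $g$, and using the hypothesis $N_{\textbf{F}/\textbf{L}_q}(b) = \xi^2$ one computes
\[ N_{\textbf{F}/\textbf{L}_q}\!\left(\tfrac{\xi^2}{g(b^3)}\right) = \frac{\xi^6}{N_{\textbf{F}/\textbf{L}_q}(b)^3} = \frac{\xi^6}{\xi^6} = 1, \]
so Hilbert's Theorem 90 supplies $\lambda_b \in \textbf{F}^*$ realising the cocycle, and with this choice $M$ is the desired affine representant. The main technical obstacle I expect is the careful bookkeeping of the three simultaneous effects of $\sigma$ (quadratic inflation of scalars and diagonal factors, and the intertwining of $A_g$ with $A_g^{op}$); this is precisely what engineers the cancellation $\xi^6/\xi^6 = 1$ that makes Hilbert 90 applicable.
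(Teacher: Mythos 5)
Your proposal is correct and follows essentially the same route as the paper's proof: checking $g'$-invariance via conjugation of $D_h$ by the cyclic permutation $(123)$, reducing the $g$-covariance to the diagonal identity $(D_b^g)^3D_h^g=\xi^2 D_h$ (equivalently the paper's $\lambda_b/g(\lambda_b)=g(b)^3/\xi^2$), and solving it by Hilbert's Theorem 90 since $N_{\textbf{F}/\textbf{L}_q}(b)=\xi^2$ forces the cocycle to have norm $1$. Your write-up is in fact somewhat more explicit than the paper's about the columnwise behaviour of $\sigma$ under scalars, diagonals and $A_g$, but the argument is the same.
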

\begin{proof}
    The existence of $b$ is due to Lemma \ref{3-pointNonEq}. The norm of $\frac{g(b)^3}{\xi^2}$ over $\textbf{L}_q$ is $1$ and by the use of Hilbert's Theorem 90 we can pick $\lambda_b \in \textbf{F}$ such that $\frac{\lambda_b}{g(\lambda_b)} = \frac{g(b)^3}{\xi^2}$. We pick as $D_h$ the diagonal with $\lambda_b, g'(\lambda_b),g'(g'(\lambda_b))$ on the diagonal. We then find that:
    \[M^{g'} = B (123) D_h^{g'} ((N B' (123))^{\sigma})^{-1} = B (123) D_h^{g'} (123)^{-1} ((NB')^{\sigma})^{-1} = B D_h ((NB')^{\sigma})^{-1} = M\]
    Which again shows, that $M$ is defined over $\textbf{L}_p$. For the representation property we look at:
    \[M = \xi^2 A_g^{op} B^g (D_b^{-1})^g D_h (D_{b^{-1}}^2)^g (((NB')^{\sigma})^{-1})^g (A_g^{op})^{-1} \] \[= A_g^{op} B^g D_h^g (((NB')^{\sigma})^{-1})^g (A_g^{op})^{-1} = A_g^{op} M^g (A_g^{op})^{-1}\]
    since $(D_b^{-1})^g D_h (D_{b^{-1}}^2)^g = \xi^{-2}D_h^g$ by our choice of $D_h$ and Lemma \ref{3-pointNonEq}. Thus $M$ is an affine representant of an automorphism in $Aut_{\textbf{k}}(S^{op})$.
\end{proof}
\noindent We are now again going to look at one example for the relation and use this to do our proof. We are going to use the construction of Lemma \ref{LargeCalc}, but this time we need some adjustments since we work with $\tilde \sigma$ and not all scalars are defined over $\textbf{k}$.
\begin{lemma} \label{HardRel}
    Let $\textbf{L}_p, \textbf{L}_q,\textbf{F}$ be as in our Convention \ref{con:2SF}, where we assume $[\textbf{L}_q: \textbf{k}] = 3$. Let $b \in \textbf{F}$ such that $N_{\textbf{F}/\textbf{L}_q}(b) = \xi^2$ and $B:= B(b), B'.= B(b^{-1})$. Let $\tilde \sigma$ be like in Lemma \ref{BB'}. We define the following:
    \[A_1 := I_3, \, A_{2i} := D_{\xi^{-1}} (A_{2i-1}^{\tilde \sigma})^{-1} = \tilde \lambda^{-1} D_{\xi^{-1}} M_2(B^{-1} A_{2i - 1})B'^{-1}, \, i = 1,2\] \[ A'_{6} := D_{\xi^{-1}} (A_{5}^{\tilde \sigma})^{-1} = \tilde \lambda^{-1} D_{\xi^{-1}} M_2(B^{-1} A_{5})B'^{-1}\] and \[A_{2i + 1} := B D_h ((A_{2i}B')^{\sigma})^{-1} = B D_h M_2(A_{2i}B'), \, i=1,2\]
    Then for $\tau$ the $3$-link represented by $\tilde \sigma$, $\alpha_i$ the automorphism represented by $A_i$ and $\alpha_6$ the automorphism represented by $A_6 := \tilde \lambda^{-3}D(B^{-1})^{-1}D_{\xi}^3 A'_6$ we get $\alpha_6 \tau \dots \alpha_1 \chi = id$ and this is a elementary relation. 
\end{lemma}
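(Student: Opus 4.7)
The plan is to verify three separate facts packaged in the statement: (a) each of $A_1,\dots,A_6$ is an affine representant of a $\textbf{k}$-automorphism of the appropriate Severi-Brauer surface; (b) the composition $\alpha_6 \tau \alpha_5 \chi \alpha_4 \tau \alpha_3 \chi \alpha_2 \tau \alpha_1 \chi$ is the identity as a $\textbf{k}$-birational self-map; and (c) the word is an elementary relation in the sense of Lemma~\ref{ElemRel}. For (a), the odd-indexed matrices $A_{2i+1}$ ($i=1,2$) are covered directly by Lemma~\ref{A3} applied with $N := A_{2i}$, since the stated formula $A_{2i+1} = B D_h ((A_{2i}B')^{\sigma})^{-1}$ is exactly the matrix $M$ produced there. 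For the even-indexed $A_{2i}$ ($i=1,2$), I would establish the symmetric companion of Lemma~\ref{A3}: the same Hilbert 90 argument, with the scalar $\tilde\lambda$ of Lemma~\ref{BB'} in place of $\lambda_b$ and the roles of $B,B'$ exchanged, shows that $\tilde\lambda^{-1} D_{\xi^{-1}} M_2(B^{-1} A_{2i-1}) B'^{-1}$ is an affine representant of an element of $Aut_{\textbf{k}}(S)$. The matrix $A'_6$ is then covered by the same case as $A_2, A_4$, and the correction factor $\tilde\lambda^{-3} D(B^{-1})^{-1} D_\xi^3$ may be checked to be a diagonal affine representant by a direct calculation modelled on the one-class case of Lemma~\ref{TrivEx}; hence $A_6$ is an affine representant as well.

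The core of the proof is (b). Substituting $\tilde\Sigma = \tilde\lambda B' \Sigma B^{-1}$ into each of the three occurrences of $\tilde\Sigma$ and collecting the $B, B'$ factors next to the neighbouring $A_i$'s gives
$$A_6 \tilde\Sigma A_5 \Sigma A_4 \tilde\Sigma A_3 \Sigma A_2 \tilde\Sigma A_1 \Sigma = \tilde\lambda^{3}\, Z_6 \Sigma Z_5 \Sigma Z_4 \Sigma Z_3 \Sigma Z_2 \Sigma Z_1 \Sigma,$$
with $Z_1 := B^{-1}$ (using $A_1 = I_3$), $Z_{2i} := A_{2i} B'$, and $Z_{2i+1} := B^{-1} A_{2i+1}$. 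Reading off the recursive definitions, one finds $Z_{i+1} = D_{i+1} M_2(Z_i)$ with diagonals $D_2 = D_4 = \tilde\lambda^{-1} D_{\xi^{-1}}$, $D_3 = D_5 = D_h$ (with different content, but this does not enter the final formula) and, using the identity $D_\xi^3 D_{\xi^{-1}} = D_\xi^{2}$, $D_6 = \tilde\lambda^{-4} D(B^{-1})^{-1} D_\xi^{2}$. Applying Remark~\ref{RemDiag} to $A := Z_1 = B^{-1}$ yields $Z_6 \Sigma \cdots \Sigma Z_1 \Sigma = (D_6 D_4^{-2} D_2^4)\, D(B^{-1})$, and a short diagonal bookkeeping shows $D_6 D_4^{-2} D_2^4 = \tilde\lambda^{-6} D(B^{-1})^{-1}$, because the $D_\xi$-powers cancel. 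The full composition therefore equals $\tilde\lambda^{3} \cdot \tilde\lambda^{-6} I_3 = \tilde\lambda^{-3} I_3$, which represents the identity in $\textrm{PGL}_3$.

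Claim (c) follows from Lemma~\ref{ElemRel}, whose hypotheses are built into the construction of the $A_i$: the recipes of Lemma~\ref{A3} and its symmetric companion (via Lemma~\ref{FixMatr}) guarantee that every composite $\alpha_{i+1} \rho_{i+1} \alpha_i$, with $\rho$ equal to $\chi$ or $\tau$ as appropriate, sends the base points of the previous link to those of the next, which is exactly the matching condition required by Lemma~\ref{ElemRel}. The main obstacle is the bookkeeping in step (b): the correction factor $\tilde\lambda^{-3} D(B^{-1})^{-1} D_\xi^3$ attached to $A'_6$ is dictated precisely by the requirement that the accumulated diagonal $D_6 D_4^{-2} D_2^4 \cdot D(B^{-1})$ collapse to a scalar in $\textrm{PGL}_3$, and one must track both the $D_\xi$-powers and the cube of $\tilde\lambda$ with some care through the five applications of $M_2$ to see that the correction factor does exactly what is needed.
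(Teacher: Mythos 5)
Your proof is correct and follows essentially the same route as the paper's: the affine-representant claims come from Lemmas~\ref{A3} and~\ref{FixMatr} (the latter already covers your even-indexed case directly, so no separate companion lemma is needed), and the identity is obtained by conjugating with $B,B'$ to produce the sequence $Z_i = D_i M_2(Z_{i-1})$ starting at $B^{-1}$ and applying Remark~\ref{RemDiag}, exactly as the paper does with its matrices $C_i$. The one step you defer --- checking that $\tilde\lambda^{-3}D(B^{-1})^{-1}D_{\xi}^{3}$ is an affine representant, which unlike the one-class case requires both the $g'$-invariance and the $g$-twisted equivariance of $D(B^{-1})$ (producing exactly the $\xi^{\pm 3}$ and $\tilde\lambda^{3}/g(\tilde\lambda^{3})$ factors that your correction term absorbs) --- is carried out explicitly in the paper and does go through.
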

\begin{proof}
    We start by proving that all the matrices are affine representants. By Lemmas \ref{FixMatr} and \ref{A3} the $A_i, A'_6$ are affine representants. We need to prove, that $ \tilde \lambda^{-3} D(B^{-1})^{-1}D_{\xi}^3$ is an affine representant too. We can see that $D(B^{-1})$ is given by the coefficients of $B^{-1}$ like in Lemma \ref{LargeCalc}. We have $(B^{-1})^{g'} = (123)^{-1}B^{-1}$, which is a permutation of the lines of $B^{-1}$. But using this we see, that $D(B^{-1})^{g'} = D(B^{-1})$. Thus using that $\tilde \lambda \in \textbf{L}_p$ we find that $ \tilde \lambda^{-3} D(B^{-1})^{-1}D_{\xi}^3$ is defined over $\textbf{L}_p$. Since $(B^{-1})^g = \xi (D_b^{-1})^g B^{-1}A_g^{op}$ and $D(A)_{ii} = det(A)^6\frac{\prod_{k=1}^3(A^{-1})_{ik}^2}{\prod_{l=1}^3a_{li}}$ we can directly calculate that: 
    \[D(B^{-1})^g_{11} = D((B^{-1})^g)_{11} = D(\xi (D_b^{-1})^g B^{-1}A_g^{op})_{11} = D(B^{-1})_{22} (\xi^3 det((D_b^{-1})^g)^3) = D(B^{-1})_{22} \frac{\xi^{-3} \tilde \lambda^3}{g(\tilde \lambda ^3)}\]
    \[D(B^{-1})^g_{22} = D((B^{-1})^g)_{22} = D(\xi (D_b^{-1})^g B^{-1}A_g^{op})_{22} = D(B^{-1})_{33} (\xi^3 det((D_b^{-1})^g)^3) =D(B^{-1})_{33} \frac{\xi^{-3} \tilde \lambda^3}{g(\tilde \lambda ^3)}\]
    \[D(B^{-1})^g_{33} = D((B^{-1})^g)_{33} = D(\xi (D_b^{-1})^g B^{-1}A_g^{op})_{33} = D(B^{-1})_{11} (\xi^{12} det((D_b^{-1})^g)^3) = D(B^{-1})_{11} \frac{\xi^{6} \tilde \lambda^3}{g(\tilde \lambda ^3)}\]
    since $(\frac{\tilde \lambda}{g(\tilde \lambda)})^3 = \frac{\xi^6}{det(D_b^g)^3}$. We thus get that $\tilde \lambda^{3} D_{\xi}^{-3} D(B^{-1})$ is an affine representant. \\ Now we want to see that $A_6 \tilde \sigma A_5 \sigma \dots A_1 \sigma = id$. For this we relate the $A_i$ to $M_i(B^{-1})$. To simplify the calculation we will work with 
    \[C_{2i} := A_{2i} B', \, i = 1,2 \, , C_{2i + 1} := B^{-1} A_{2i + 1}, \, i = 0,1,2, \, C_{6} := A'_{6} B' \]
    for which we get:
    \[C_{2i} = \tilde \lambda^{-1} D_{\xi^{-1}} M_2(C_{2i -1}), \, i = 1,2, \, C_{6} = \tilde \lambda^{-1} D_{\xi^{-1}} M_2(C_{5})\] 
    \[ C_{2i + 1}  = D_h M_2(C_{2i}), \, i=1,2, \, C_1 = B^{-1}\] 
    Thus we observe with Remark \ref{RemDiag}, that in $Bir_{\textbf{F}}(\mathbb{P}^2)$: 
    \[A_6 \tilde \sigma A_5 \sigma \dots A_1 \sigma = (D(B^{-1})^{-1}D_{\xi}^3) A'_6 B' \sigma B^{-1} A_5 \dots B^{-1}A_1 \sigma \] \[= (D(B^{-1})^{-1}D_{\xi}^3) C_6\sigma \dots C_1 \sigma = (D(B^{-1})^{-1}D_{\xi}^3 D_{\xi^{-1}}) (D_{\xi^{-1}}^{-2}) (D_{\xi^{-1}}^4) D(B^{-1}) = id \] 
    By Lemma \ref{ElemRel} and using $\varphi, \varphi^{op}$, $\alpha_6 \tau \dots \alpha_1 \chi = id$ is an elementary relation.
\end{proof}
\begin{theorem}
    Take an elementary relation of $S$, where both $3$-point classes are non-equivalent and their splitting fields are degree $3$ extentions of $\textbf{k}$. Then the determinant of the elementary relation is trivial. 
\end{theorem}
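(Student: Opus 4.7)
The plan is to follow the same strategy as in Theorem~\ref{TrivialCase}, replacing the role of Lemma~\ref{TrivEx} by Lemma~\ref{HardRel}. First, by Lemma~\ref{DepPoint}, the determinant of an elementary relation depends only on the base points of $\tau_1$ (once $\chi_1$ is fixed), so it is enough to exhibit one elementary relation with the prescribed classes of base points and check that its determinant lies in $(\textbf{k}^*)^3$. Using Lemma~\ref{Subst} we may take $\chi$ to be represented by $\Sigma=(yz,xz,xy)$, and using Lemma~\ref{BB'} we take $\tau$ to be represented by $\tilde\Sigma=\tilde\lambda\,B'\Sigma B^{-1}$ for some $b\in\textbf{F}$ with $N_{\textbf{F}/\textbf{L}_q}(b)=\xi^{2}$. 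Lemma~\ref{HardRel} then furnishes matrices $A_1,\dots,A_6$ making $\alpha_6\tau\alpha_5\chi\alpha_4\tau\alpha_3\chi\alpha_2\tau\alpha_1\chi=id$ into a genuine elementary relation, with $A_1=I_3$.

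The determinant of the word, in $\textbf{k}^*/(\textbf{k}^*)^3$, equals
\[
\frac{\det(A_2)\det(A_4)\det(A_6)}{\det(A_1)\det(A_3)\det(A_5)}.
\]
I would compute this using the substitution $C_{2i-1}:=B^{-1}A_{2i-1}$ and $C_{2i}:=A_{2i}B'$ introduced in the proof of Lemma~\ref{HardRel}. These $C_i$ satisfy the recursion of Remark~\ref{RemDiag} applied to the base matrix $C_1=B^{-1}$, with the diagonal insertions $D_2=D_4=\tilde\lambda^{-1}D_{\xi^{-1}}$, $D_3=D_5=D_h$ (and $D_6$ absorbing the correction $\tilde\lambda^{-3}D(B^{-1})^{-1}D_\xi^{3}$). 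Applying Remark~\ref{RemDiag} and Lemma~\ref{LargeCalc}, the $C$-part contributes $d(B^{-1})\det(D(B^{-1}))^{-1}$ times explicit diagonal factors, while the remaining factors of $\det(B)$ and $\det(B')$ come from passing between the $A_i$ and the $C_i$. By the two closed formulas in Lemma~\ref{LargeCalc}, the ratio $d(B^{-1})\det(D(B^{-1}))^{-1}$ has the form $P(B^{-1})^{15}/(P(B)^{9}\det((B^{-1})^\sigma)^{42}\det(B^{-1})^{60})$, whose every exponent is divisible by $3$, so this ratio is automatically a cube in $\textbf{F}^{*}$.

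It remains to verify that the auxiliary factors $\tilde\lambda$, $D_h$, $D_\xi$, $\det(B)$, $\det(B')$ collectively contribute a cube in $\textbf{k}^*/(\textbf{k}^*)^3$. Here one uses the defining relations $\tilde\lambda/g(\tilde\lambda)=\xi^{2}/g(\det D_b)$ and $\lambda_b/g(\lambda_b)=g(b)^{3}/\xi^{2}$, together with the $g'$-invariance properties $B^{g'}=B\cdot(123)$ and $D(B^{-1})^{g'}=D(B^{-1})$ proved in Lemma~\ref{HardRel}, to show that $\det(D(B^{-1}))$, $\det(B)^{3}\det(B')^{3}$, $\tilde\lambda^{3}N_{\textbf{F}/\textbf{L}_{p}}(\lambda_b)$, and $\det(D_\xi)^{3}$ are all Galois-fixed (hence in $\textbf{k}^*$) after raising to a power divisible by $3$; the exponents arising in the counting match those coming from Remark~\ref{RemDiag}, so the whole correction is a cube of a $\textbf{k}^*$-element.

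The main obstacle will be the bookkeeping of diagonal factors: one must carefully propagate $D_2,\dots,D_6$ through the iterated $M_2$ substitution, keep track of the sign exponents $1,-2,4$ appearing in Remark~\ref{RemDiag}, and combine them with the transition matrices $B,B'$ so that the surviving $\textbf{F}^{*}$-scalars admit Galois-invariant cube roots via Hilbert~90. Once this is organised, triviality of the determinant of the relation follows directly from the cube divisibility in Lemma~\ref{LargeCalc}.
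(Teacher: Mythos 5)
Your proposal follows essentially the same route as the paper: reduce to the single model relation of Lemma~\ref{HardRel} via Lemma~\ref{DepPoint}, pass to the matrices $C_i$ and apply Lemma~\ref{LargeCalc} together with Remark~\ref{RemDiag}, and then dispose of the leftover scalars using the Galois identities for $\tilde\lambda$, $\lambda_b$, $\det(B)$, $\det(B')$ and $\det(D_h)$. The one caveat is that the decisive step --- the explicit exponent bookkeeping showing that the residual factor (in the paper, $P(B^{-1})^{-15}P(B)^{9}\det(B^{-1})^{60}\det((B^{-1})^{\sigma})^{42}\tilde\lambda^{126}\det(D_h)^{18}\det(B)^{3}\det(B')^{3}$) is the cube of an element of $\textbf{k}^*$ and not merely of $\textbf{F}^*$ --- is asserted rather than carried out, and that verification is where the bulk of the paper's proof actually lives.
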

\begin{proof}
 By Corollary \ref{DepPoint} it is enough to look at all possible base points of $\tau \alpha_1$ and choose one version of the other $5$ automorphisms such that the base point are send to each other by the links as it is shown in Figure \ref{fig: rel}. We do this by choosing $B,B', \tilde \sigma$ as in Lemma \ref{BB'}. Since the $3$-points occuring in this relation cannot be on one of the $3$ lines $x,y,z=0$ we can get all $3$-points non equivalent to $p$ with $B$. We thus can use the example relation from Lemma \ref{HardRel}. We now need to calculate the $A_i$ explicitely to be able to find the determinant, since in this case a priori we are only in $(\textbf{F}^*)^3$. To calculate the $A_i$ we use the $C_i$ from the proof of Lemma \ref{HardRel}. Observe, that for a diagonal matrix $D$, a general matrix $A$ and a scalar $\mu$ we get as in Remark \ref{RemDiag}, that: 
    \[M_2(A D) = D^{-2} M_2(A), \] 
    \[ M_2(DA) = det(D)^{-1} M_2(A) D\] 
    \[M_2(\mu A) = \mu^{-2} M_2(A)\]
    Using this and $M_2 \circ M_2 = M_3$ we get that (writing $M_i$ instead of $M_i(B^{-1})$):
    \[C_2 =  \lambda_2 D_{\xi^{-1}} M_2, \, \lambda_2 = \tilde \lambda^{-1}\]
    \[C_3 =  \lambda_3 D_h M_3 D_{\xi^{-1}}, \, \lambda_3 = \tilde \lambda^2\]
    \[C_4 =  \lambda_4 D_{\xi} M_4 D_h, \, \lambda_4 = \tilde \lambda^{-5} det(D_h^{-1})\]
    \[C_5 = \lambda_5 D_h^{-1} M_5 D_{\xi}, \, \lambda_5 = \tilde \lambda^{10} det(D_h)^2\]
    \[C_6 =   \lambda_6 D_{\xi^{-3}} M_6 D_h^{-1}, \, \lambda_6 =\tilde \lambda^{-22} det(D_h)^{-3}\] 
 The only thing left to do is to calculate the determinant and check that it actually lies in $(\textbf{k}^*)^3$ and not only in $(\textbf{F}^*)^3$. \\
 We can see using Lemma \ref{LargeCalc}, that: \[det(\tilde \lambda^3 D(B^{-1})D_{\xi}^{-3}) = \tilde \lambda^9 \xi^{-9} det(D(B^{-1})) = \tilde \lambda^9 P(B^{-1})^{-1} P(B)^2 det(B^{-1})^{18}\] Our goal is to see that \[ \frac{det(A_2A_4A'_6)det(A_1A_3A_5)^{-1}} {det(\tilde \lambda^{3}D(B^{-1}) D_{\xi}^{-3})} = det(A_2A_4A_6)det(A_1A_3A_5)^{-1}\]
 belongs to $(\textbf{k}^*)^3$. We have that:
 \[det(A_2A_4A'_6)det(A_1A_3A_5)^{-1} = det(C_2C_4C_6)det(C_1C_3C_5)^{-1} det(B')^{-3}det(B)^3 \] \[= d(B^{-1}) \tilde \lambda^{-117} det(D_h)^{-18}det(B')^{-3}det(B)^{-3}\]
 Using Lemma \ref{LargeCalc} we get that:
 \[\frac{det(\tilde \lambda^3 D(B^{-1})D_{\xi}^{-3})}{det(A_2A_4A'_6)det(A_1^{-1}A_3^{-1}A_5^{-1})} =  \frac{\tilde \lambda^9 P(B^{-1})^{-1} P(B)^2 det(B^{-1})^{18}}{d(B^{-1}) \tilde \lambda^{-117} det(D_h)^{-18}det(B')^{-3}det(B)^{-3}}\]
 \[= P(B^{-1})^{-15} P(B)^9 det(B^{-1})^{60} det((B^{-1})^{\sigma})^{42} \tilde \lambda^{126} det(D_h)^{18}det(B')^3det(B)^3\]
 We now want to see, that this lies in $(\textbf{k}^*)^3$. To do so, we use the following properties:
\[g(P(B^{-1})) = P(\xi (D_b^{-1})^gB^{-1}A_g^{op}) = \xi^6 g(det(D_b)^{-3}) P(B^{-1}) = (\frac{\tilde \lambda}{g(\tilde \lambda)})^3P(B^{-1})\]
Where the first equality is by interchanging $P,g$ and using Lemma \ref{3-pointNonEq} on $B$ and the second one is a property of $P$. This implies, that 
\begin{equation} \label{eq:1P}
P(B^{-1}) \tilde \lambda^3 \in \textbf{k}^*
\end{equation}
and a similar calculation shows
\[P(B)\tilde \lambda^{-3} \in \textbf{k}^*\] 
which again implies 
\begin{equation} \label{eq:2Ps}
P(B^{-1})P(B)\in \textbf{k}^*
\end{equation}
We also see with Lemma \ref{3-pointNonEq} and $g(det(D_b))\xi^{-2} = \frac{g(\tilde \lambda)}{\tilde \lambda}$ by Lemma \ref{BB'}:
\[g(det(B)) = \xi^{-2} g(det(D_b)) det(B) = \frac{g(\tilde \lambda)}{\tilde \lambda} det(B)\]
which implies 
\begin{equation} \label{eq:1B}
det(B) \tilde \lambda^{-1} \in \textbf{k}^*
\end{equation}
and similarly 
\[det(B') \tilde \lambda \in \textbf{k}^*\] 
and thus also 
\begin{equation} \label{eq:2Bs}
det(B)det(B') \in \textbf{k}^*
\end{equation}
Since:
\[\frac{\lambda_b}{g(\lambda_b)} = \frac{g(b)^3}{\xi^2}, \, \frac{\tilde \lambda}{g(\tilde \lambda)} = \frac{\xi^2}{N_{\textbf{F}/\textbf{L}_p}(g(b))}\]
We can find that:
\[\frac{det(D_h)}{g(det(D_h))} = N_{\textbf{F}/\textbf{L}_p}(\frac{\lambda_b}{g(\lambda_b)}) = \frac{N_{\textbf{F}/\textbf{L}_p}(g(b))^3}{\xi^6} =(\frac{g(\tilde \lambda)}{\tilde \lambda})^3\]
We thus get:
\begin{equation} \label{eq:Dh}
    \tilde \lambda^3 det(D_h) \in \textbf{k}^*
\end{equation}
Now we use this to reduce the term:
\[\frac{det(\tilde \lambda^3 D(B^{-1})D_{\xi}^{-3})}{det(A_2A_4A'_6)det(A_1^{-1}A_3^{-1}A_5^{-1})}\] 
\[= P(B^{-1})^{-15} P(B)^9 det(B^{-1})^{60} det((B^{-1})^{\sigma})^{42} \tilde \lambda^{126} det(D_h)^{18}det(B')^3det(B)^3\]
From the fact that $A_2^{-1} = \tilde \lambda B' (B^{-1})^{\sigma} D_{\xi}, \, det(D_{\xi}) \in (\textbf{k}^*)^3, \, \tilde \lambda^{126} = (\tilde \lambda ^3)^{ 42}$ we can find:
\[= P(B^{-1})^{-15} P(B)^9 det(A_2^{-1})^{42} det(D_h)^{18}det(B')^{-39}det(B)^{-57}\]
From \eqref{eq:2Ps}, \eqref{eq:2Bs}, $39=3 \cdot 13$ and $ det(A_2^{-1})^{42} \in (\textbf{k}^*)^3$ we get:
\[= P(B^{-1})^{-24} det(D_h)^{18}det(B)^{-18}\]
From \eqref{eq:1P} we get, since $72 = 24 \cdot 3$
\[= (\tilde \lambda)^{72} det(D_h)^{18}det(B)^{-18}\]
Using \eqref{eq:Dh} and $72 = 18 + 54 = 18 + 3 \cdot 3 \cdot 6$
\[= (\tilde \lambda)^{18} det(B)^{-18} \]
Using \eqref{eq:1B} we finally get:
\[ = 1\]
\end{proof}
\noindent
Now if one or both of the $3$-points have a splitting field, that is a degree $6$ extension, we look at how the argument can be reduced to the previous cases using Galois theory.
\begin{theorem} \label{DetElRef}
    For all elementary relations the determinant is trivial.
\end{theorem}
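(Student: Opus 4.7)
The plan is Galois descent. The two preceding theorems cover every elementary relation in which both classes of $3$-point base points have splitting fields of degree $3$ over $\textbf{k}$; what remains is the case where at least one splitting field, $\textbf{L}_p$ (for the base points of $\chi$) or $\textbf{L}_q$ (for those of $\tau$), is a degree $6$ extension. By Lemma~\ref{3pLem}, $\mathrm{Gal}(\textbf{L}_p/\textbf{k}) \cong \mathrm{Sym}_3$ in that case, and its unique order-$3$ subgroup (generated by the canonical $g$) has a quadratic fixed field $\textbf{k}_p \subset \textbf{L}_p$ over which $\textbf{L}_p$ becomes a cubic extension. Define $\textbf{k}_q$ analogously if needed, and set $\textbf{k}' := \textbf{k}_p \cdot \textbf{k}_q$; then $[\textbf{k}':\textbf{k}] \in \{1,2,4\}$, in particular coprime to $3$.

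First I would verify that base change preserves everything we need. Since the Brauer class $[S]$ has order $3$ and $[\textbf{k}':\textbf{k}]$ is coprime to $3$, the restriction $[S] \in \mathrm{Br}(\textbf{k}')$ is still non-trivial; so $S_{\textbf{k}'}$ is still a non-trivial Severi-Brauer surface. The $3$-points $p, q$ remain $3$-points over $\textbf{k}'$ (their cubic residue fields cannot be swallowed by a quadratic or quartic extension), with the same splitting fields $\textbf{L}_p, \textbf{L}_q$, now of degree $3$ over $\textbf{k}'$. The affine representant $A \in \mathrm{GL}_3(\textbf{L}_p)$ of an automorphism $\alpha \in \mathrm{Aut}_{\textbf{k}}(S)$, satisfying $A^g = A_g^{-1} A A_g$ for all $g \in \mathrm{Gal}(\textbf{L}_p/\textbf{k})$, in particular satisfies this relation for every $g \in \mathrm{Gal}(\textbf{L}_p/\textbf{k}') \subseteq \mathrm{Gal}(\textbf{L}_p/\textbf{k})$, so $A$ remains a valid affine representant over $\textbf{k}'$. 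Consequently $\det_{\textbf{k}'}(\alpha)$ is the image of $\det_{\textbf{k}}(\alpha)$ under the natural map $\iota\colon \textbf{k}^*/(\textbf{k}^*)^3 \to (\textbf{k}')^*/((\textbf{k}')^*)^3$. The elementary relation, its decomposition via equivalences $\chi_i = \gamma_i \chi \delta_i$, $\tau_i = \alpha_i \tau \beta_i$, and the product equation all pull back verbatim to $\textbf{k}'$, so Theorem~\ref{TrivialCase} (if $p \sim q$ in $S_{\textbf{k}'}$) or the theorem preceding this one (if not) forces triviality of the $\textbf{k}'$-determinant.

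It then suffices to show $\iota$ is injective. If $x \in \textbf{k}^*$ and $x = y^3$ with $y \in (\textbf{k}')^*$, taking norms gives $x^{[\textbf{k}':\textbf{k}]} = N_{\textbf{k}'/\textbf{k}}(y)^3$; choosing $a, b \in \mathbb{Z}$ with $a[\textbf{k}':\textbf{k}] + 3b = 1$ (possible since $\gcd([\textbf{k}':\textbf{k}], 3) = 1$) yields $x = x^{a[\textbf{k}':\textbf{k}] + 3b} = (N_{\textbf{k}'/\textbf{k}}(y)^a x^b)^3 \in (\textbf{k}^*)^3$. So $\iota$ is injective, and the $\textbf{k}$-determinant of the original relation is trivial. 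The main obstacle I anticipate is the bookkeeping behind the claim that $A$ remains a valid affine representant over $\textbf{k}'$ without breaking the determinant calculation: the representant is unique only up to $\textbf{k}^*$ over $\textbf{k}$ but only up to $(\textbf{k}')^*$ over $\textbf{k}'$, so one must check that this enlarged scalar ambiguity does not cause $\det_{\textbf{k}'}(\alpha) \in (\textbf{k}')^*/((\textbf{k}')^*)^3$ to diverge from $\iota(\det_{\textbf{k}}(\alpha))$; modding out by $((\textbf{k}')^*)^3$ absorbs precisely this discrepancy, and the same verification applies uniformly to each of the six automorphisms $\alpha_i, \beta_i, \gamma_i, \delta_i$ in the word.
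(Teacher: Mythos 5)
Your proposal is correct and follows essentially the same strategy as the paper: base-change to the quadratic fixed field(s) inside the degree-$6$ splitting field(s) to land in the already-proven degree-$(3,3)$ case, then descend using the injectivity of $\textbf{k}^*/(\textbf{k}^*)^3 \to (\textbf{k}')^*/((\textbf{k}')^*)^3$ for $[\textbf{k}':\textbf{k}]$ coprime to $3$. The only cosmetic differences are that you take the compositum $\textbf{k}_p\textbf{k}_q$ in one step where the paper iterates quadratic extensions one at a time, and you prove injectivity via a norm-and-B\'ezout argument where the paper uses a minimal-polynomial-degree argument; both are valid.
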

\begin{proof}
     We take an elementary relation over $S$, with $p,q$ the two $3$-points of the relation. Let $\textbf{L}_p, \textbf{L}_q$ be their splitting fields (we choose the same field if they are equivalent). If we look at $([\textbf{L}_p:\textbf{k}], [\textbf{L}_q:\textbf{k}])$ we have proven the theorem in the $(3,3)$ case. If we have the $(3,6)$ case we can invert the relation to switch $p,q$. Thus we are left with looking at $(6,3),(6,6)$. 
    \newline
    We start with $\textbf{(6,3)}$. Since $\textbf{k} \subset \textbf{L}_p$ is Galois of degree $6$ we can find a field $\textbf{L}_2$ such that $\textbf{k} \subset \textbf{L}_2 \subset \textbf{L}_p$, where the first extension is of degree $2$ and the second of degree $3$. Assume $S_{\textbf{L}_2}$ would have $\textbf{L}_2$-points, then they would correspond to a $2$-point in $S_{\textbf{k}}$, since it is just the Galois action of this field extention, which is of degree $2$. But this is impossible, thus $S_{\textbf{L}_2}$ is a non-trivial Severi-Brauer surface over a perfect field. Now $q$ must still be a $3$-point (its order can not become higher through a field extension and not lower than $3$). We just need to make sure, that it still has a splitting field with field extension $3$. But $q$ splits over $\textbf{L}_2 \textbf{L}_q$ and by primitive element theorem $\textbf{k} \subset \textbf{L}_q$ is generated by one element with minimal polynomial over $\textbf{k}$ of degree $3$. Using the same element this still holds true for $\textbf{L}_2 \subset\textbf{L}_2 \textbf{L}_q$. Now we are in the $(3,3)$ case and can use the previous result. We can use the isomorphism $\varphi, \varphi^{op}$ for $\xi,\xi^{-1}$ from Lemma \ref{3pLem} that we have always used, since for degree $6$ extensions they give the same $A_g$ and we can keep our affine representation of the automorphisms. Thus we get, by the theorems on the $(3,3)$ cases, that the determinant of the word is in $\textbf{k}^* \cap (\textbf{L}_2^{*})^3$. But it is just left to see now, that $\textbf{k}^* \cap (\textbf{L}_2^{*})^3 \subset (\textbf{k}^{*})^3$. For this pick $a \in \textbf{k}^*$ and $b \in \textbf{L}_2^*$ such that $a = b^3$. Then if the polynomial $x^3 - a$ has a solution in $\textbf{k}$ we are done. If not, we have that the minimal polyonmial of $b$ over $\textbf{k}$ is of degree $3$, which is impossible for a degree $2$ field extension. This finishes this case.
    \newline
    For $\textbf{(6,6)}$ we can take again $\textbf{L}_2$ such that $\textbf{k} \subset \textbf{L}_2 \subset \textbf{L}_p$ with the same degrees as above. Like above $S_{\textbf{L}_2}$ is still a non-trivial Severi-Brauer surface over a perfect field and $q$ is still a $3$-point (its splitting field can be of both degree $3$ or $6$). Now we are either in the $(3,3)$ case and like above using the same $\varphi, \varphi^{op}$ and $\textbf{k}^* \cap (\textbf{L}_2^{*})^3 \subset (\textbf{k}^{*})^3$ we are finished by the previous result. In the $(3,6)$ case we use again the same $\varphi, \varphi^{op}$ and see by the previous case, that the determinant of the word is in $ (\textbf{L}_2^{*})^3$. But due to using the same isomorphisms, we see it is also in $\textbf{k}^*$. Now we argue like in the end of the $(6,3)$ case.  
\end{proof}
\noindent
Since both the elementary and trivial relations are mapped to the identity by the determinant map, we can prove the following result.
\begin{corollary}\label{DetOnBir}
    For every choice $R$ of representators of the different classes of $\textbf{k}$-links (from $S$ to $S^{op}$) there is a well defined non-trivial groupoid homomorphism \[det_R: G_S \rightarrow \textbf{k}^*/(\textbf{k}^*)^3\] \[R \mapsto 1\] \[\alpha \in Aut_{\textbf{k}}(S) \mapsto det(\alpha)\] \[\beta \in Aut_{\textbf{k}}(S^{op}) \mapsto det(\beta)^{-1}\] Where $G_S := \{ \varphi: S_1 \dashrightarrow S_2 : S_1,S_2 \in \{ S, S^{op}\}, \, \varphi \text{ birational }\}$.
\end{corollary}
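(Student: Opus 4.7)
The plan is to apply Theorem~\ref{Isk}: the groupoid $BirMori(S)$ is presented by Sarkisov links and isomorphisms of Mori fibre spaces modulo trivial and elementary relations, and by Lemma~\ref{Links36} every Mori fibre space birational to $S$ is isomorphic to $S$ or $S^{op}$ and every Sarkisov link goes between the two. So $G_S$ inherits such a presentation, with generators $Aut_{\textbf{k}}(S) \cup Aut_{\textbf{k}}(S^{op})$ together with all Sarkisov links between $S$ and $S^{op}$. To obtain a groupoid homomorphism it is therefore enough to prescribe $det_R$ on generators and check that both types of relations are sent to the neutral class of $\textbf{k}^*/(\textbf{k}^*)^3$.

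First I would extend $det_R$ from $R$ to all Sarkisov links, not only those in $R$. Given a link $\chi_i: S \dashrightarrow S^{op}$, let $\chi \in R$ be the representative of its equivalence class, write $\chi_i = \gamma_i \chi \delta_i$ with $\gamma_i \in Aut_{\textbf{k}}(S^{op})$ and $\delta_i \in Aut_{\textbf{k}}(S)$, and set $det_R(\chi_i) := det(\gamma_i)^{-1} det(\delta_i)$. That this does not depend on the decomposition is precisely what Lemma~\ref{DetTrivRef} buys us: a competing decomposition $\gamma_i' \chi \delta_i'$ produces $\beta := (\gamma_i')^{-1}\gamma_i \in Aut_{\textbf{k}}(S^{op})$ and $\alpha := \delta_i'\delta_i^{-1} \in Aut_{\textbf{k}}(S)$ with $\beta \chi = \chi \alpha$, so the lemma gives $det(\beta) = det(\alpha)^{-1}$, which upon unpacking (and using that $\textbf{k}^*/(\textbf{k}^*)^3$ is abelian) yields the required equality $det(\gamma_i)^{-1}det(\delta_i) = det(\gamma_i')^{-1}det(\delta_i')$.

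Next I would verify the relations. The relation $\chi \circ \chi^{-1} = id$ is automatic since $det_R$ takes values in an abelian group and so respects inverses. The trivial relation $\alpha \circ \chi \circ \beta = \chi'$ is automatic once the extension of the previous step is in place, because both sides refer to the same $R$-representative and thus evaluate to the same element of $\textbf{k}^*/(\textbf{k}^*)^3$ by construction. For the elementary relations, Theorem~\ref{DetElRef} is the key input: it asserts that the product $\prod_{i=1}^3 det(\alpha_i\delta_i) det(\beta_i\gamma_i)^{-1}$ associated to any elementary relation lies in $(\textbf{k}^*)^3$, hence is trivial in the quotient.

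The real obstacle for the whole construction was already cleared in the preceding sections: Theorem~\ref{DetElRef} (and inside it the case analysis by the degrees of the splitting fields of the two $3$-point classes) together with Lemma~\ref{DetTrivRef}; this corollary itself is the bookkeeping step that glues those results into a groupoid homomorphism via Theorem~\ref{Isk}. For the assertion that $det_R$ is non-trivial it suffices to note that its restriction to $Aut_{\textbf{k}}(S)$ is the determinant of Theorem~\ref{Determinant}, whose image is precisely the subgroup $\textrm{DET} \subseteq \textbf{k}^*/(\textbf{k}^*)^3$ appearing in Theorem~\ref{MainB}; exhibiting a single $\alpha \in Aut_{\textbf{k}}(S)$ with $det(\alpha) \notin (\textbf{k}^*)^3$ (as occurs already over the base field used in the worked example of Section~4) is enough.
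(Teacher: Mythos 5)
Your argument follows the paper's proof essentially step for step: both use the presentation of $G_S$ coming from Theorem~\ref{Isk} and Lemma~\ref{Links36}, dispose of the trivial relation $\beta\chi = \chi\alpha$ via Lemma~\ref{DetTrivRef} (your reduction of "independence of the decomposition $\chi_i = \gamma_i\chi\delta_i$" to that lemma is exactly the right computation), and dispose of the elementary relations via Theorem~\ref{DetElRef}. The only organizational difference is that you extend $det_R$ to all links before checking relations, whereas the paper defines it on words in $R \cup Aut_{\textbf{k}}(S) \cup Aut_{\textbf{k}}(S^{op})$ directly; these are equivalent.

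The one place your write-up falls short of the statement is non-triviality. Pointing to the worked example over $\mathbb{C}(u,v)$ only shows $det_R$ is non-trivial for that particular surface, while the corollary asserts it for every non-trivial Severi-Brauer surface over every perfect field; and saying the image is $\textrm{DET}$ does not by itself show $\textrm{DET} \neq \{1\}$. The paper's argument is uniform: pick a $3$-point whose splitting field $\textbf{L}$ has degree $3$, so that $A_g = \bigl(\begin{smallmatrix} 0 & 0 & \xi \\ 1 & 0 & 0 \\ 0 & 1 & 0 \end{smallmatrix}\bigr)$; then $A_g^g = A_g = A_g^{-1}A_gA_g$, so $A_g$ is an affine representant of a $\textbf{k}$-automorphism of $S$ with $det(A_g) = \xi$, and $\xi$ is not a norm of $\textbf{L}/\textbf{k}$ (otherwise $S$ would be trivial), hence not in $(\textbf{k}^*)^3$ since cubes of elements of $\textbf{k}^*$ are norms. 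Adding that observation closes the gap.
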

\begin{proof}
    From Theorem \ref{Isk} and Lemma \ref{Links36} we get that every $G_S$ is generated by links from $R$ and automorphisms of $S,S^{op}$, with elementary relations given by Lemma \ref{ElRel} and trivial relations given by $\chi^{-1} \chi = id, \, \chi^{-1} \beta \chi \alpha = id$. For $\varphi \in G_S$ given by $\alpha_{r+1}\prod_{i=1}^r\chi_i \alpha_i$, where $\chi_i \in R$ and $\alpha_i$ are automorphisms of $S$ if $i$ is odd and automorphisms of $S^{op}$ if $i$ is even, we can define $det_R(\varphi) := det(\alpha_1 \alpha_2^{-1} \dots )$. The first trivial relation is sent to the identity by definition, the second one by Lemma \ref{DetTrivRef}. The elementary relation is sent onto the identity by Theorem \ref{DetElRef}. To show that it is non trivial we take a $3$-point $p$ whose splitting field $\textbf{L}_p$ is a degree $3$ extension over $\textbf{k}$ (exists like mentioned before) and thus for taking the usual isomorphism $\varphi$ we get the usual element $\xi$ which is not a norm and thus not in $(\textbf{k}^*)^3$. This $\xi$ is the determinant of $A_g$ and we see $A_g^{-1} A_g A_g = A_g = A_g^g$ represents a $\textbf{k}$-automorphism of $S$ and thus $\xi$ is in the image but is not the identity. 
\end{proof}

\begin{corollary}[Follows from Theorem \ref{SBGHom} and Corollary \ref{DetOnBir}] \label{AbMap}
    For every non-trivial Severi-Brauer surface $S$ over a perfect field $\textbf{k}$ and $q$ a $3$-point of $S$, there is a group homomorphism \[\Phi:Bir_{\textbf{k}}(S) \rightarrow \bigoplus_{p \in \mathcal{E}_3 \setminus \{ q \}} \mathbb{Z}/3\mathbb{Z} \oplus \bigoplus_{p \in \mathcal{E}_6} \mathbb{Z} \oplus \textbf{k}^* / (\textbf{k}^*)^3\] which does not send all automorphisms to trivial elements.
\end{corollary}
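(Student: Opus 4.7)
The plan is to assemble $\Phi$ as the direct sum of two homomorphisms that have already been constructed earlier in the paper. The group $Bir_{\textbf{k}}(S)$ is precisely the vertex group of $G_S$ at the object $S$, i.e.\ the set of arrows $S \dashrightarrow S$ in $G_S$. Since by Corollary \ref{DetOnBir} the map $det_R: G_S \to \textbf{k}^*/(\textbf{k}^*)^3$ is a groupoid homomorphism, its restriction to this vertex group is automatically a group homomorphism, which I will call $det_S : Bir_{\textbf{k}}(S) \to \textbf{k}^*/(\textbf{k}^*)^3$.

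Next I would take the map $\Phi_{BSY}$ from Theorem \ref{SBGHom} and form the direct sum
\[
\Phi \;:=\; \Phi_{BSY} \oplus det_S \;:\; Bir_{\textbf{k}}(S) \longrightarrow \bigoplus_{p \in \mathcal{E}_3 \setminus \{ q \}} \mathbb{Z}/3\mathbb{Z} \;\oplus\; \bigoplus_{p \in \mathcal{E}_6} \mathbb{Z} \;\oplus\; \textbf{k}^*/(\textbf{k}^*)^3.
\]
Both factors are group homomorphisms with the same source, so their direct sum is a well-defined group homomorphism into the product. This takes care of the existence and homomorphism properties.

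It remains to verify that $\Phi$ does not kill every automorphism. By the lemma that appears after Proposition \ref{AffineRep}, we may choose a $3$-point $p$ of $S$ whose splitting field $\textbf{L}$ satisfies $[\textbf{L}:\textbf{k}]=3$; the corresponding $\xi \in \textbf{k}^*$ coming from Lemma \ref{3pLem} is then not a cube in $\textbf{k}^*$, as already used at the end of the proof of Corollary \ref{DetOnBir}. The matrix $A_g$ satisfies $A_g^g = A_g = A_g^{-1} A_g A_g$, so it is the affine representant of a genuine element $\alpha \in Aut_{\textbf{k}}(S)$; by definition $det(\alpha) = \det(A_g) = \xi \neq 1$ in $\textbf{k}^*/(\textbf{k}^*)^3$. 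Since $\Phi_{BSY}$ ignores automorphisms (it is defined to send them to $0$), we get $\Phi(\alpha) = (0,\xi)$, which is non-trivial in the last factor.

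There is no real obstacle: every ingredient is already proved. The only step that requires any thought is checking that the restriction of the groupoid homomorphism to the vertex group is a group homomorphism, but this is immediate from the definition of a groupoid homomorphism once one notes that $Bir_{\textbf{k}}(S) = \mathrm{Hom}_{G_S}(S,S)$.
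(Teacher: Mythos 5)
Your proposal is correct and matches the paper's (implicit) argument: the corollary is stated as following directly from Theorem \ref{SBGHom} and Corollary \ref{DetOnBir}, and the intended proof is exactly the direct sum of the BSY homomorphism with the restriction of $det_R$ to the vertex group $Bir_{\textbf{k}}(S)$, with non-triviality witnessed by the automorphism represented by $A_g$ whose determinant $\xi$ is not a cube. Nothing is missing.
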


\section{The abelianization of $Bir_{\textbf{k}}(S)$}
\noindent Now we are going to use the group homomorphism from the last chapter to find our abelianization. The last ingrediant that we need is the abelianization of the auromorphism group of $S$.
\begin{lemma}
    Let $\textbf{L}$ be a splitting field of a $3$-point of $S$, that is a degree $3$ extension over $\textbf{k}$ and $\xi$ as usual. Take the matrix $A_g$ as usual. The central simple algebra associated to $S_{\xi}$ is the following:
    \[D:= \{ A \in Mat_{3\times 3}(\textbf{L}) : A_g^{-1}AA_g = A^g\}\]
\end{lemma}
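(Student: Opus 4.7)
\noindent
The plan is to realize $D$ as the Galois descent of $\mathrm{Mat}_{3\times 3}(\textbf{L})$ along a twisted action determined by the cocycle $[A_g]$; this is the standard construction that recovers the central simple algebra attached to a Severi-Brauer variety from its cocycle.

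First I would verify that $D$ is a $\textbf{k}$-subalgebra of $\mathrm{Mat}_{3\times 3}(\textbf{L})$. The defining condition is $\textbf{k}$-linear in $A$, and the identity trivially lies in $D$. If $A, B \in D$, then
\[A_g^{-1}(AB)A_g = (A_g^{-1}AA_g)(A_g^{-1}BA_g) = A^g B^g = (AB)^g,\]
so $AB \in D$. Observe moreover that the condition is independent of the choice of scalar representative of $A_g$, since scalars cancel through the conjugation; in particular it only uses the class $[A_g] \in \mathrm{PGL}_3(\textbf{L})$.

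Next I would show that the map $\rho_g: \mathrm{Mat}_{3\times 3}(\textbf{L}) \to \mathrm{Mat}_{3\times 3}(\textbf{L})$, $B \mapsto A_g B^g A_g^{-1}$, defines a semilinear action of $\mathrm{Gal}(\textbf{L}/\textbf{k})$ whose fixed set is exactly $D$. The cocycle relation $A_{gh} = \lambda_{g,h} A_g A_h^g$ with $\lambda_{g,h} \in \textbf{L}^*$ yields $\rho_g \circ \rho_h = \rho_{gh}$ since the scalars $\lambda_{g,h}$ disappear under the conjugation. By Galois descent for finite dimensional modules (equivalently, from $H^1(\mathrm{Gal}(\textbf{L}/\textbf{k}), \mathrm{GL}_3(\textbf{L})) = 0$, Hilbert's Theorem 90), the $\textbf{L}$-linear extension of the inclusion $D \hookrightarrow \mathrm{Mat}_{3\times 3}(\textbf{L})$ is an isomorphism of $\textbf{L}$-algebras $D \otimes_{\textbf{k}} \textbf{L} \to \mathrm{Mat}_{3\times 3}(\textbf{L})$. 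In particular $\dim_{\textbf{k}} D = 9$, so $D$ is a central simple algebra over $\textbf{k}$ split by $\textbf{L}$.

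To finish I would identify the Brauer class of $D$ with that of $S_\xi$. With the trivialization $\psi: D \otimes_{\textbf{k}} \textbf{L} \to \mathrm{Mat}_{3\times 3}(\textbf{L})$ above as chosen splitting, the cocycle attached to $D$ is $g \mapsto \psi \circ g \circ \psi^{-1} \circ g^{-1}$. For $A \in D$ one has $g(A) = A^g = A_g^{-1}AA_g$, so this automorphism is inner conjugation by $A_g$, recovering in $\mathrm{PGL}_3(\textbf{L})$ the cocycle $[A_g]$ that defines $S_\xi$ via Lemma \ref{3pLem}. The main obstacle is bookkeeping conventions between the variety and algebra sides of the Galois-cohomological correspondence; once both are pinned down through the same $\varphi$ and the same $A_g$, the two classes in $H^1(\mathrm{Gal}(\textbf{L}/\textbf{k}), \mathrm{PGL}_3(\textbf{L}))$ coincide, and so $D$ is the central simple algebra associated to $S_\xi$.
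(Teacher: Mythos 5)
Your proof is correct, but it takes a genuinely different route from the paper. You package everything into Galois descent: you observe that $\rho_g(B) = A_g B^g A_g^{-1}$ is a well-defined semilinear action of $\mathrm{Gal}(\textbf{L}/\textbf{k})$ by algebra automorphisms (the scalars $\lambda_{g,h}$ in the cocycle relation cancel under conjugation), identify $D$ as its fixed subalgebra, and invoke Speiser/Hilbert 90 descent to get $D \otimes_{\textbf{k}} \textbf{L} \simeq Mat_{3\times 3}(\textbf{L})$ at once --- which yields $\dim_{\textbf{k}} D = 9$, centrality, and simplicity for free, before reading off the cocycle $[A_g]$ exactly as the paper does in its final step. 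The paper instead argues by hand: it embeds $\textbf{L}$ as the diagonal matrices $\mathrm{diag}(\lambda, g(\lambda), g^2(\lambda))$, exhibits the explicit $\textbf{k}$-basis $\{b^i A_g^j\}_{0 \le i,j \le 2}$ (essentially a cyclic-algebra presentation of $D$), verifies centrality by direct computation, proves $D$ is a \emph{division} algebra using the geometric fact that the columns of a nonzero $A \in D$ form a $3$-point and hence are not collinear, and checks the linear independence needed for the splitting isomorphism $f$ explicitly. Your argument is shorter and more conceptual; what it does not deliver (and does not need for the stated lemma) is the division-algebra property, which in the paper's version reflects the non-triviality of $S$ and makes the structure of $D$ concrete. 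One small point worth making explicit in your write-up: $\rho_{id} = \mathrm{id}$ requires choosing $A_{id}$ to be (a scalar multiple of) the identity, which is harmless since $A_{id} = \varphi \circ \varphi^{-1}$.
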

\begin{proof}
    Note that $D$ is a subring of $Mat_{3 \times 3}(\textbf{L})$. Identifying $\textbf{k}$ with $\textbf{k}I_3$ makes it a associate $\textbf{k}$ algebra.
     We can identify $\textbf{L}$ with the diagonal elements of the form $\begin{pmatrix}
         \lambda & 0 & 0 \\
         0 & g(\lambda) & 0 \\
         0 & 0 & g^2(\lambda) 
     \end{pmatrix}$ for $\lambda \in \textbf{L}$. Now we see, that $\{ I_3, A_g, A_g^2\}$ is a basis of the $\textbf{L}$-vector space $D$. Thus for a $b \in \textbf{L}$ where $\textbf{L} = \textbf{k}(b)$ we get, that $1,b,b^2$ is a basis of $\textbf{L}$ over $\textbf{k}$ thus $\{ I_3, A_g, A_g^2, bI_3, bA_g, bA_g^2,b^2I_3,b^2A_g,b^2A_g^2\}$ is a $\textbf{k}$ basis, where we again identify $b$ in $D$ as $\begin{pmatrix}
         b & 0 & 0 \\
         0 & g(b) & 0 \\
         0 & 0 & g^2(b) 
     \end{pmatrix}$. Thus we see, that the dimension is $9$. We are left to show, that it is a central simple division algebra over $\textbf{k}$ that is associated to $S_{\xi}$.
     \newline
     \underline{The center of $D$}: As $\lambda I_3, \, \lambda \in \textbf{k}$ commutes with all matrices we have $\textbf{k} \subset Z(D)$. Conersely, let now $A \in Z(D)$. The first column of $Ab$ is given by $ba_{11}, g(b)a_{21}, g^2(b)a_{31}$. The first column of $bA$ is given by $a_{11}b, a_{21}b, a_{31}b$ and since $g(b) \neq b$ we get, that $a_{21}=a_{31}=0$ thus $A$ is diagonal with diagonal $c,g(c),g^2(c), \, c \in \textbf{L}$. Using $I_3 + A_g + A_g^2$ we find $c=g(c)$ and thus $c \in \textbf{k}$. Thus $D$ is central.
     \newline
     \underline{Division Algebra}: Let $A \in D$ and $a_1$ its first column. We find if $A \neq 0$, that $a_1 \neq 0$ and thus the columns of $A$ correspond to the $3$-point $a_1, A_g(g(a_1)), A_g(g^2(A_1))$ and are thus not collinear (\cite[Proof of 2.3.2]{BSY}). Thus $A$ is invertible and thus $D$ is a division algebra.
     \newline
     \underline{Simple}: $A$ is simple, because $D \setminus D^* = \{ 0 \}$. 
     \newline
     \underline{\textbf{L} is a splitting field}: We take $D':= D \otimes_{\textbf{k}} \textbf{L}$ and the following homomorphism:
     \[f: D' \rightarrow Mat_{3 \times 3}(\textbf{L}), \, A \otimes c \mapsto cA\]
     (The multiplication with $\textbf{L}$ being the normal one now). It is now left to see, that \newline 
     $\{ I_3, A_g, A_g^2, bI_3, bA_g, bA_g^2,b^2I_3,b^2A_g,b^2A_g^2\}$ is linearly independent over $\textbf{L}$. Define $b_{ij}:= b^i A_g^j, \, i,j=0,1,2$.  Now take $\lambda_{ij} \in \textbf{L}$ such that $\sum_{i,j=0}^2 \lambda_{ij}b_{ij} = 0$. To prove, that $\lambda_{ij} = 0$ it is enough to see, that if we have $a_i \in \textbf{L}, \, i=0,1,2$ such that $a_0 + a_1g^j(b) + a_2g^j(b^2) = 0, \, j=0,1,2$ we have $a_0=a_1=a_2=0$. From the three equations we gind, that $a_1(g(b)-b) + a_2(g(b^2)-b^2)= 0$ and $a_1(g^2(b)- b) + a_2(g^2(b^2) - b^2) = 0$. If one of the $a_i$ is zero all of them are. Thus we assume none of them are $0$ and thus find $- \frac{a_3}{a_2} = \frac{g(b) - b}{g(b)^2 - b^2} = \frac{1}{g(b) + b}$ and $- \frac{a_3}{a_2} = \frac{g^2(b) - b}{g^2(b)^2 - b ^2} = \frac{1}{g^2(b) + b}$. This implies that $g(b) = g^2(b)$, which is false. Thus $b_{ij}$ are linearly independant in $Mat_{3 \times 3}(\textbf{L})$ and thus a basis. Thus the above homomorphism is a isomorphism.
     \newline
     \underline{Cocyle of $D$}: The corresponding cocycle of $S_{\xi}$ in $H^1(Gal(\textbf{L}/\textbf{k}), PGl_3(\textbf{L}))$ is given by $A_g$. Now to find the one for $D$ we want a matrix $a_g$ such that $f \circ g = a_g \circ g \circ f$. We observe, that $f(g(A \otimes \lambda)) = g(\lambda) A = g(\lambda) A_g A^g A_g^{-1} = [A_g] (g (f (A \otimes \lambda))) $. Thus $S_{\xi}$ is the Severi-Brauer surface, that belongs to $D$ (3.6 \cite{jahnel}).
\end{proof}

\begin{lemma} \label{AutAbel}
    Let $S$ be a non-trivial Severi-Brauer surface. The determinant map is the abelianization of $Aut_{\textbf{k}}(S)$.
\end{lemma}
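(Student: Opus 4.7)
The plan is to identify $Aut_{\textbf{k}}(S)$ with $D^*/\textbf{k}^*$ (where $D$ is the division algebra of the preceding lemma), recognise $det$ as the reduced norm modulo cubes, and then deduce the statement from the triviality of $\mathrm{SK}_1(D)$.

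First I would unpack Lemma~\ref{AutRep}: the affine representant $A$ of $\alpha \in Aut_{\textbf{k}}(S)$ is an element of $\mathrm{GL}_3(\textbf{L})$ satisfying $A^g = A_g^{-1}AA_g$, which is exactly the defining condition of $D$ given in the preceding lemma. This representant is unique up to $\textbf{k}^*$, and conversely any $A \in D^*$ produces via $\varphi$ a Galois-equivariant element of $\mathrm{PGL}_3(\textbf{L})$, hence an automorphism of $S$. The resulting isomorphism $Aut_{\textbf{k}}(S) \cong D^*/\textbf{k}^*$ is compatible with the determinant: under it, $det$ becomes the reduced norm $\mathrm{Nrd}\colon D^* \to \textbf{k}^*$ taken modulo $(\textbf{k}^*)^3$, because on $D \subseteq \mathrm{Mat}_3(\textbf{L})$ the reduced norm coincides with the usual determinant, and rescaling a representant by $\lambda \in \textbf{k}^*$ contributes $\lambda^3$.

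Since $\textbf{k}^*/(\textbf{k}^*)^3$ is abelian, $det$ factors as a surjection of $Aut_{\textbf{k}}(S)^{ab}$ onto $\textrm{DET}$. Injectivity amounts to showing that any $A \in D^*$ with $\mathrm{Nrd}(A) \in (\textbf{k}^*)^3$ lies in $\textbf{k}^* \cdot [D^*, D^*]$. Writing $\mathrm{Nrd}(A) = \lambda^3$, the element $\lambda^{-1}A$ has reduced norm one, so the problem reduces to the inclusion $\mathrm{SL}_1(D) \subseteq [D^*, D^*]$, i.e.\ the vanishing of $\mathrm{SK}_1(D)$.

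The main obstacle, and the only external input needed, is this vanishing: by Wang's theorem $\mathrm{SK}_1(D) = 1$ for any central simple algebra of square-free degree, which covers our degree-$3$ case. For the concrete cyclic algebra $D$ described in the preceding lemma one could alternatively give a direct Hilbert~90 argument, writing a reduced-norm-one element explicitly as a product of commutators in $D^*$ using the cyclic basis $\{1, A_g, A_g^2\}$; but invoking Wang is cleanest. Combined with the identification above, this yields $Aut_{\textbf{k}}(S)^{ab} \cong \textrm{DET}$, as desired.
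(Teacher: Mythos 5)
Your proposal is correct and follows essentially the same route as the paper: identify $Aut_{\textbf{k}}(S)$ with $D^*/\textbf{k}^*$ via the affine representants, recognise $det$ as the reduced norm modulo $(\textbf{k}^*)^3$, invoke Wang's theorem for the vanishing of $\mathrm{SK}_1(D)$, and handle the passage from $D^*$ to $D^*/\textbf{k}^*$ by rescaling a norm-$\lambda^3$ element to a norm-one element. Your write-up is somewhat more explicit about the reduction of injectivity to $\mathrm{SL}_1(D)\subseteq [D^*,D^*]$, but the content is the same.
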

\begin{proof}
    This follows, since for $\varphi$ as usual we get $\varphi Aut_{\textbf{k}}(S) \varphi^{-1} = D^*/\textbf{k}^*$ for the $D$ from above. Using the isomorphism $f$ one can define the reduced norm of the central simple algebra as the composition of the determinant on $Mat_{3 \times 3}(\textbf{L})$ with $f$ (for a more accurate description see \cite{Gille_Szamuely_2006} Chapter 2.6). Thus in our case we find the reduced norm on $D$ is the determinant. Using \cite{Wang} we get, that the abelianization of $D$ is the determinant. Now the abelianization of $D^*/\textbf{k}^*$ is also the determinant mapping to $\textbf{k}^*/(\textbf{k}^*)^3$, since any matrix with determinant in $(\textbf{k}^*)^3$ can be represented by a matrix with determinant $1$, which is a multiplication of commutators in $D$ and then its class is a multiplication of the commutators of the classes of those commutators in $D^*/\textbf{k}^*$. 
\end{proof}
\begin{lemma} \label{ImageDET}
    Let $S$ be a non-trivial Severi-Brauer surface. Then $det(Aut_{\textbf{k}}(S)) = det(Aut_{\textbf{k}}(S^{op}))$.
\end{lemma}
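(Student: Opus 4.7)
The plan is to reduce the claim to a statement about reduced norms on central simple algebras. From the lemma immediately preceding Lemma~\ref{AutAbel}, after conjugation by an isomorphism $\varphi$ associated to a $3$-point whose splitting field $\mathbf{L}$ has degree $3$ over $\mathbf{k}$, we have $\mathrm{Aut}_{\mathbf{k}}(S) \cong D^*/\mathbf{k}^*$ where
\[ D = \{A \in \mathrm{Mat}_{3\times 3}(\mathbf{L}) : A_g^{-1} A A_g = A^g\}, \]
and under this identification the determinant map is the reduced norm on $D$ modulo $(\mathbf{k}^*)^3$. The same construction, run with $\varphi^{op}$ and cocycle $A_g^{op}$, produces $\mathrm{Aut}_{\mathbf{k}}(S^{op}) \cong (D^{op})^*/\mathbf{k}^*$ with
\[ D^{op} = \{A \in \mathrm{Mat}_{3\times 3}(\mathbf{L}) : (A_g^{op})^{-1} A A_g^{op} = A^g\}. \]
So the task reduces to showing $\det(D^*) = \det((D^{op})^*)$ inside $\mathbf{k}^*$.

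To establish this, I would exhibit a determinant-preserving bijection $D \to D^{op}$ given by transposition $A \mapsto A^T$. The reason this sends $D$ into $D^{op}$ is the matrix identity $(A_g^T)^{-1} = A_g^{op}$, a one-line $3\times 3$ check using the explicit forms of $A_g$ and $A_g^{op}$ (both are companion-type matrices, with $\xi$ replaced by $\xi^{-1}$). Granted this identity, transposing $A^g = A_g^{-1} A A_g$ and using $(A^g)^T = (A^T)^g$ gives
\[ (A^T)^g \;=\; A_g^T A^T (A_g^T)^{-1} \;=\; (A_g^{op})^{-1} A^T A_g^{op}, \]
so $A^T \in D^{op}$, and since $\det(A^T) = \det(A)$ the image sets of the determinant on $D^*$ and $(D^{op})^*$ coincide in $\mathbf{k}^*$. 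Reducing modulo $(\mathbf{k}^*)^3$ gives the claim.

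The only genuine obstacle is the verification of the matrix identity $(A_g^T)^{-1} = A_g^{op}$, but this is a routine calculation. A more conceptual restatement of the whole argument is that $D$ and $D^{op}$ are tautologically anti-isomorphic via the identity on underlying sets, and the reduced norm of a central simple algebra is invariant under any such anti-isomorphism (over a splitting field it becomes the determinant, which is preserved by the transposition isomorphism $\mathrm{Mat}_3^{op} \cong \mathrm{Mat}_3$); the explicit transposition just realizes this anti-isomorphism in matrix coordinates. Note finally that the sign convention of Corollary~\ref{DetOnBir}, which sends $\beta \in \mathrm{Aut}_{\mathbf{k}}(S^{op})$ to $\det(\beta)^{-1}$, does not affect the equality of subgroups being claimed, since any subgroup of $\mathbf{k}^*/(\mathbf{k}^*)^3$ is closed under inversion.
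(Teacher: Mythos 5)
Your proof is correct, but it takes a genuinely different route from the paper. The paper's argument is geometric and very short given the machinery already in place: for $\alpha \in Aut_{\textbf{k}}(S)$ it takes the $3$-point $p$ fixed componentwise by $\alpha$ (Lemma \ref{Fixpoints}), a Sarkisov link $\chi$ based at $p$, and applies Lemma \ref{DetTrivRef} to the trivial relation $\beta\chi = \chi\alpha$ to conclude that $\det(\beta) = \det(\alpha)^{-1}$ for $\beta := \chi\alpha\chi^{-1} \in Aut_{\textbf{k}}(S^{op})$; closure of subgroups under inversion finishes it. Your argument instead works entirely on the algebra side: identifying $Aut_{\textbf{k}}(S) \cong D^*/\textbf{k}^*$ and $Aut_{\textbf{k}}(S^{op}) \cong (D^{op})^*/\textbf{k}^*$ and exhibiting the determinant-preserving bijection $A \mapsto A^T$, which works because $(A_g^T)^{-1} = A_g^{op}$ (this identity does check out with the paper's conventions $A_g$, $A_g^{op}$ built from $\xi$, $\xi^{-1}$). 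Both proofs are valid. The paper's version is shorter because Lemmas \ref{Fixpoints} and \ref{DetTrivRef} have already been paid for, and it stays inside the Sarkisov formalism used throughout; yours avoids both of those lemmas entirely, is more conceptual (the reduced norm is invariant under the tautological anti-isomorphism $D \cong D^{op}$), and has the side benefit of making explicit that $\textrm{DET}$ is the image of the reduced norm of the single algebra $D$ modulo cubes, which is directly relevant to the open question stated at the end of the paper. The one point you should make explicit rather than assert is that the algebra attached to $S^{op}$ via $\varphi^{op}$ is exactly $\{A : (A_g^{op})^{-1}AA_g^{op} = A^g\}$ with the cocycle $A_g^{op}$ of the stated companion form; this follows by rerunning the preceding lemma and Lemma \ref{AutRep} for $S^{op}$ with the paper's convention on $\varphi^{op}$, as you indicate.
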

\begin{proof}
    Take $\alpha \in Aut_{\textbf{k}}(S)$. By Lemma \ref{Fixpoints} there is a $3$-point $p$ which is fixed by $\alpha$. Take $\chi: S \dashrightarrow S^{op}$ a Sarkisov link based at $p$. By Lemma \ref{DetTrivRef} the automorphism $\beta := \chi^{-1} \alpha \chi \in Aut_{\textbf{k}}(S)$ satisfies $det(\alpha) = det(\beta)$.
\end{proof}
\noindent Finally we have all the tools to prove Theorem \ref{MainB}.

\begin{proof}[Proof of Theorem \ref{MainB}]
    Take $\Phi$ from Corollary \ref{AbMap} (which is the one described in Theorem \ref{MainB}). Since the commutator subgroup of $Bir_{\textbf{k}}(S)$ is in the kernel of $\Phi$, we can define a map $\Phi^{Ab}$ the map induced by $\Phi$ on the abelianization of $Bir_{\textbf{k}}(S)$. Now to study this maps kernel we take a $[f]$ in the kernel of this map. Using Theorem \ref{Isk} we can write: \[f = (\Pi_{i=1}^k \alpha_{2i - 1} \chi_{p_{2i-1}}^{-1} \alpha_{2i} \chi_{p_{2i}}) \alpha_{2k + 1} \] 
    \[= \Pi_{i=0}^k \alpha_{2i+1} \Pi_{i=1}^k \chi_{p_{2i -1}}^{-1} \chi_q \chi_q^{-1} \alpha_{2i} \chi_q \chi_q^{-1} \chi_{p_{2i}}\]
    \[ = (\Pi_{i=0}^k \alpha_{2i+1}) \chi_q^{-1} (\Pi_{i=1}^k \alpha_{2i}) \chi_q (\Pi_{i=1}^k B_{p_{2i -1},q}B_{q,p_{2i}})\] 
    where $p_i \in \mathcal{E}_3 \cup \mathcal{E}_6$, $\chi_p \in R$ are the representants of the Sarkisov links of class $p$, $B_{p_i,q} := \chi_{p_i}^{-1}\chi_q$ and $\alpha_i$ are automorphisms of $S$ if the index is uneven or $S^{op}$ if the index is odd. If there is $i$ even and $j$ odd with $p_i \sim p_j$ we can find $B_{q,p_i} B_{p_j,q} = B_{q,q} = id$, we can thus substitute. Since $B_{q,q} = id$ we can assume $p_i \not\sim q$. If there is a class $p$, that appears more than 3 times we can find $\beta_1, \dots , \beta_6$ such that $\beta_1 \chi_p^{-1} \beta_2\chi_q \dots \beta_6 \chi_q = id$ and thus $B_{p,q}^3 = \beta_1 \beta_3 \beta_5 \chi_q^{-1} \beta_2 \beta_4 \beta_6 \chi_q$. Using this we can assume that $f$ has the following form:
    \begin{equation}
        f = \alpha \chi_q^{-1} \alpha' \chi_q \Pi_{i=1}^r B_{p_i,q}^{a_i}
    \end{equation}
    For $p_i \in \mathcal{E}_3 \cup \mathcal{E}_6 \setminus \{ q \}$ non equivalent and $a_i>0$, where we have:
    \begin{equation} 
        \forall 1 \leq i \leq r \text{ such that } p_i \in \mathcal{E}_3, \, a_i \leq 2
    \end{equation}
    where $\alpha \in Aut_{\textbf{k}}(S), \alpha' \in Aut_{\textbf{k}}(S^{op})$. Now since $f$ is in the kernel of $\Phi^{Ab}$ we find, that all $r = 0$. We can now take a $3$-point $q$ that is the fixpoint of $\alpha'$ (see Lemma \ref{Fixpoints}) and see, that in the abelianization we have $\alpha \chi_p^{-1} \alpha' \chi_p = (\alpha \chi_p^{-1} \chi_q) (\chi_q^{-1}\alpha' \chi_p) = \alpha (\chi_p^{-1} \chi_q) \tilde \alpha' (\chi_q^{-1} \chi_p) = \alpha \tilde \alpha'$, where $\tilde \alpha' \in Aut_{\textbf{k}}(S)$ such that $\chi_q^{-1} \alpha' = \tilde \alpha' \chi_q^{-1}$ (exists since $\alpha'$ fixes $q$). Finally we see, that the automorphisms of determinant $1$, are commutators of automorphisms and thus also commutators in $Bir_{\textbf{k}}(S)$. Thus $\Phi^{Ab}$ is injective and thus its image is the abelianization of $Bir_{\textbf{k}}(S)$. We also note, that the image contains $\bigoplus_{p \in \mathcal{E}_3 \setminus \{ q \}} \mathbb{Z}/3\mathbb{Z} \oplus \bigoplus_{p \in \mathcal{E}_6} \mathbb{Z}$ and that DET is the set of all the products of determinants of automorphisms of $S,S^{op}$ and thus by Lemma \ref{ImageDET} it is the image of $Aut_{\textbf{k}}(S)$ under the determinant. 
\end{proof}
\noindent
 To determine DET more accurately is a more complicated task. Every automorphism of $S,S^{op}$ has a fixpoint that is a $3$-point and over that points splitting field the automorphism is represented as a diagonal matrix. If that points splitting field has degree $3$ over $\textbf{k}$ the value of the determinant can be all values in the image of the norm. If it is of degree $6$ we need further restrictions on the diagonal. Another more algebraic way to look at this is to see, that the $\textbf{k}^*/(\textbf{k}^*)^3$ part is generated by the images of the norm map of the central simple algebras of $S,S^{op}$ modulo $(\textbf{k}^*)^3$. Thus the remaining open question is:
\begin{question}
    What is the exact image of $Bir_{\textbf{k}}(S)$ under the group homomorphism $\Phi$ from Corollary \ref{AbMap}? 
\end{question}
\noindent As a last result we can use this group homomorphisms to find some maximal subgroups.
\begin{corollary}
    Take $q$ a $3$-point of $S$. Take $p \in \mathcal{E}_3 \setminus \{ q \} \cup \mathcal{E}_6$ and $\Phi_{p}$ the composition of the $\Phi$ (defined with $q$) from Corollary \ref{AbMap} with the projection on the $p$ part of the direct sum. Define $M_{p} := \Phi_p^{-1}(M'_p)$, where $M'_p = \{ 0_p \} $ if $p$ is a $3$-point and $M'_p = a_p\mathbb{Z}$ where $a$ is a prime number if it is a $6$-point, which are the maximal subgroups of the respective group. Then $M_p$ is a maximal subgroup of $Bir_{\textbf{k}}(S)$.
\end{corollary}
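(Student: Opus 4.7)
The plan is to reduce the statement to the correspondence theorem for group homomorphisms. First I would observe that $\Phi_p$ is a group homomorphism from $Bir_{\textbf{k}}(S)$ to an abelian group $T_p$, where $T_p = \mathbb{Z}/3\mathbb{Z}$ if $p \in \mathcal{E}_3 \setminus \{q\}$ and $T_p = \mathbb{Z}$ if $p \in \mathcal{E}_6$. By the construction of $\Phi$ in Corollary \ref{AbMap} and Theorem \ref{SBGHom}, the representative link $\chi_p \in R$ is sent by $\Phi_p$ to the generator $1_p$ of $T_p$, so $\Phi_p$ is surjective. (The determinant summand of $\Phi$ does not interfere here because $p \neq $ the determinant factor.)

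Next I would invoke the correspondence theorem. Since $M_p = \Phi_p^{-1}(M'_p)$ contains $\ker(\Phi_p)$, the subgroups of $Bir_{\textbf{k}}(S)$ containing $M_p$ are in bijection (via $H \mapsto \Phi_p(H)$) with the subgroups of $T_p$ containing $M'_p$. By the choice of $M'_p$, the quotient $T_p / M'_p$ is a simple group: $\mathbb{Z}/3\mathbb{Z}$ in the $3$-point case (so $M'_p = \{0\}$ is maximal), and $\mathbb{Z}/a\mathbb{Z}$ with $a$ prime in the $6$-point case (so $M'_p = a\mathbb{Z}$ is maximal). Thus the only subgroups of $T_p$ containing $M'_p$ are $M'_p$ itself and $T_p$, which correspond to $M_p$ and $Bir_{\textbf{k}}(S)$ respectively. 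This shows $M_p$ is maximal.

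The proof is essentially a one-paragraph application of the correspondence theorem once surjectivity of $\Phi_p$ is checked, so there is no serious obstacle; the only point that requires any care is to confirm that $\Phi_p$ really is surjective onto the full target $T_p$, which follows directly from the fact that the Sarkisov link $\chi_p \in R$ is mapped to the chosen generator $1_p$ of $T_p$ by $\Phi$ (as recorded in Theorem \ref{SBGHom}) and that projection onto the $p$-summand commutes with this.
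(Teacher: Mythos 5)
Your proof is correct, and it is a cleaner route than the one in the paper. You reduce the statement to the correspondence theorem: $\Phi_p$ is surjective (the $p$-summand of the homomorphism of Theorem \ref{SBGHom} already hits the generator $1_p$, and the DET summand is projected away), $M_p$ contains $\ker \Phi_p$, and $M'_p$ is maximal in the target because the quotient has prime order ($3$ or $a$). The paper instead argues with explicit generators: it first observes that $M_p$ contains all the blocks $B_{p',q}$ for $p' \not\sim p$, the conjugates $\chi_q^{-1}\beta\chi_q$ and all of $Aut_{\textbf{k}}(S)$, so that only the block $B_{q,p}$ is missing to generate $Bir_{\textbf{k}}(S)$; then for $\psi \notin M_p$ it sets $l := \Phi_p(\psi)$, notes $\psi B_{q,p}^{-l} \in \ker\Phi_p \subseteq M_p$, and uses B\'ezout in the $6$-point case to recover $B_{q,p}$ from $B_{q,p}^l$ together with $M_p$. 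The arithmetic core is identical in both arguments (maximality of $\{0\}$ in $\mathbb{Z}/3\mathbb{Z}$ and of $a\mathbb{Z}$ in $\mathbb{Z}$), but your version bypasses the block-generation bookkeeping entirely, at the price of being less explicit about which concrete birational maps lie in $M_p$. One small point of precision: the link $\chi_p$ itself is a map $S \dashrightarrow S^{op}$ and hence not an element of $Bir_{\textbf{k}}(S)$; the element actually witnessing surjectivity of $\Phi_p$ is a composition such as $B_{q,p} = \chi_q^{-1}\chi_p$, which is sent to $1_p$ because $\chi_q$ is sent to $0$. This does not affect your argument, since the surjectivity you need is already recorded in Theorem \ref{SBGHom}.
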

\begin{proof}
    We know $M_p$ contains all Blocks of the form $B_{p',q}, \, p' \in \mathcal{E}_3 \setminus \{ p\} \cup \mathcal{E}_6 $ like above and $B^{\beta} := \chi_q^{-1} \beta \chi_q, \, \beta \in Aut_{\textbf{k}}(S)$. It also contains all elements from $Aut_{\textbf{k}}(S)$. Thus the only missing block to generate $Bir_{\textbf{k}}(S)$ is $B_{q,p}$. Now take $\psi \in Bir_{\textbf{k}}(S) \setminus M_p$. Then take $l:= \Phi_p(\psi)$ (in the $3$-point case, we take it to be either $1,-1$). Thus we get, that $\psi B_{q,p}^{-l} \in ker(\Phi_p) \subseteq M_p$ thus $\langle \psi , M \rangle = \langle B_{q,p}^l,M \rangle$ and we can assume $\psi = B_{q,p}^l$. By possibly inverting $\psi$ we can assume that $l > 0$. Thus in the $3$-point case $l=1$ and we are finished. In the $6$-point case $l$ is not devided by $a$, thus they are coprime and we get from Bézouts Lemma $m,n \in \mathbb{Z}$ such that $ml + na = 1$. Thus $Bir_{\textbf{k}}(S) \supseteq \langle \psi,M_p \rangle \supseteq \langle B_{q,p},M \rangle = Bir_{\textbf{k}}(S)$, which finishes our proof.
\end{proof}
\bibliographystyle{alphadin}
\bibliography{refs}
\end{document}